\theoremstyle{plain}
\newtheorem{theo}{Theorem}[section]
\newtheorem{lemma}[theo]{Lemma}
\newtheorem{propo}[theo]{Proposition}
\newtheorem{coro}[theo]{Corollary}
\theoremstyle{definition}
\newtheorem{defi}[theo]{Definition}
\theoremstyle{remark}
\newtheorem{rem}[theo]{Remark}
\newcommand\colim{\operatorname{colim}}
\newcommand\id{\operatorname{id}}
\newcommand\sets{\mathcal{S}\mathrm{ets}}
\newcommand\ssets{\mathrm{s}\mathcal{S}\mathrm{ets}}
\newcommand\sOper{\mathrm{s}\mathcal{O}\mathrm{per}}
\newcommand\sCat{\mathrm{s}\mathcal{C}\mathrm{at}}
\newcommand\Cat{\mathcal{C}\mathrm{at}}
\newcommand\ECat[1]{{#1}\text{-}\Cat}
\newcommand\Oper{\mathcal{O}\mathrm{per}}
\newcommand\Alg{\mathcal{A}\mathrm{lg}}
\newcommand\EAlg[1]{{#1}\text{-}\Alg}
\newcommand{\myrightleftarrows}[1]{\mathrel{\substack{\xrightarrow{#1} \\[-.8ex] \xleftarrow{#1}}}}
\newcommand{\lmorp}[3]{#1 \colon #2 \longrightarrow #3}
\newcommand{\morp}[3]{#1 \colon #2 \to #3}
\newcommand{\gfun}[5]{
    \begin{array}{r@{}c@{\hspace{0.1cm}}c@{\hspace{0.1cm}}l}
        #1 \colon & #2 & \longrightarrow & #3 \\
        & #4 & \longmapsto & #5\\
    \end{array}
}
\newcommand{\sAt}{\mathrm{s}\mathcal{AT}\mathrm{h}}
\newcommand{\At}{\mathcal{AT}\mathrm{h}}
\newcommand{\EAt}[1]{{#1}\text{-}\At}
\newcommand{\sAlg}[1]{\mathrm{s}\mathcal{A}\mathrm{lg}(#1)}
\newcommand{\at}{\mathcal}
\newcommand{\wm}{\mathrm{wm}}
\newcommand{\cat}{\mathcal}
\newcommand{\str}{\mathbf}
\newcommand{\Str}[1]{\mathsf{Sq}(#1)}
\newcommand{\crep}[1]{h_{#1}}
\newcommand{\Na}{\mathsf{N}}
\newcommand{\Sa}{\mathsf{S}}
\newcommand{\op}{^{\mathrm{op}}}
\newcommand{\bcn}[4]{\mathbf{B}_{#4}(#1, #2, #3)}
\newcommand{\bc}[3]{\bcn{#1}{#2}{#3}{\bullet}}
\newcommand{\dbcn}[4]{{B}_{#4}(#1, #2, #3)}
\newcommand{\dbc}[3]{\dbcn{#1}{#2}{#3}{\bullet}}
\newcommand{\bcf}[2]{\overline{#1(#2)}}
\newcommand{\nervpr}[2]{\overline{#1}(#2)}
\newcommand{\N}{\mathbb{N}}
\newcommand{\Fin}{\mathrm{Fin}}
\newcommand{\Sym}{\mathbf{\Sigma}}
\newcommand{\Sign}[1]{{\mathrm{Sign}(#1)}}
\newcommand{\Fign}[1]{{\mathrm{CSign}(#1)}}
\newcommand{\Ord}[1]{\mathrm{Ord}_{#1}}
\newcommand{\Clo}{\mathrm{COper}}
\newcommand{\oat}[1]{\mathbb{T}(#1)}
\newcommand{\oatf}{\mathbb{T}}
\newcommand{\ope}{\mathcal}
\newcommand{\Dayp}{\overline{\otimes}}
\newcommand{\cAtf}{\mathbb{P}}
\newcommand{\cAt}[1]{\cAtf{#1}}
\newcommand{\Mcc}{\cat{M}}
\newcommand{\EOper}[1]{{#1}\text{-}\Oper}
\newcommand{\EClo}[1]{{#1}\text{-}\Clo}
\newcommand{\EOperc}[2]{\EOper{#1}_{#2}}
\newcommand{\ECloc}[2]{\EClo{#1}_{#2}}
\newcommand{\EAtc}[2]{\EAt{#1}_{#2}}
\numberwithin{equation}{section}
\begin{document}
\title[On Morita equivalences of simplicial algebraic theories and operads]{On Morita weak equivalences of simplicial algebraic theories and operads}

\author[G. Caviglia]{Giovanni Caviglia}
\address{Radboud Universiteit Nijmegen, Institute for
Mathematics, Astrophysics, and Particle Physics, Heyendaalseweg 135, 6525 AJ
Nijmegen, The Netherlands}
\email{giova.caviglia@gmail.com}

\author[J.\,J. Guti\'errez]{Javier J.~Guti\'errez}
\address{Departament de Matem\`atiques i Inform\`atica,
Universitat de Barcelona (UB),
Gran Via de les Corts Catalanes 585,
08007 Barcelona, Spain}
\email{javier.gutierrez@ub.edu}
\urladdr{http://www.ub.edu/topologia/gutierrez}

\begin{abstract}
Generalizing a classical result of Dwyer and Kan for simplicial categories, we characterize the morphisms of multi-sorted simplicial algebraic theories and simplicial  coloured operads which induce a Quillen equivalence between the corresponding categories of algebras. 
\end{abstract}

\maketitle

\section{Introduction}\label{sect:intro}
Given a small category $\mathcal{C}$, we can think of the category of functors $\sets^\mathcal{C}$ as the category of \emph{representations} (in $\sets$) of $\mathcal{C}$. Two categories need not to be equivalent to have equivalent categories of representations. In fact, recall that  every functor $f\colon\mathcal{C}\to \mathcal{D}$ between small categories induces an adjunction between the corresponding categories of representations
\[
    f_!:\sets^\mathcal{C}\myrightleftarrows{\rule{0.4cm}{0cm}}\sets^\mathcal{D}: f^*,
\]
where the right adjoint $f^*$ is precomposition with $f$. The following is a classical result in category theory (see for instance~\cite{EZ76}, \cite{BD86}).
\begin{theo}\label{thm:moritaCat}
For every functor $f\colon \mathcal{C} \to \mathcal{D}$, the adjunction $(f_!, f^*)$ is an equivalence of categories if and only if $f$ is fully faithful and essentially surjective up to retracts.    
\end{theo}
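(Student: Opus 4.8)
The plan is to deduce the statement from two elementary features of the functor category $\sets^{\mathcal{C}}$: every object is a colimit of representable functors $\mathcal{C}(c,-)$, and the left adjoint $f_!$ (left Kan extension along $f$) carries $\mathcal{C}(c,-)$ to $\mathcal{D}(fc,-)$. Note that $f_!$ preserves all colimits, being a left adjoint, and that $f^*$ also preserves all colimits, since these are computed objectwise in functor categories (equivalently, $f^*$ has the right Kan extension $f_*$ as a further right adjoint). First I would record that the unit $\id\to f^{*}f_{!}$, evaluated at $\mathcal{C}(c,-)$, is objectwise the map $\mathcal{C}(c,c')\to\mathcal{D}(fc,fc')$ induced by $f$; since $\id$ and $f^{*}f_{!}$ are both cocontinuous and the representables generate $\sets^{\mathcal{C}}$ under colimits, this unit is a natural isomorphism exactly when $f$ is fully faithful, which by the standard characterization of fully faithful left adjoints is equivalent to $f_!$ being fully faithful. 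Thus the clause ``$f$ fully faithful'' is already accounted for, and the remaining content concerns essential surjectivity.

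For the ``if'' direction, assume $f$ is fully faithful and essentially surjective up to retracts. Then $f_!$ is fully faithful by the above, and it remains to show it is essentially surjective. Its essential image is closed under colimits, since $f_!$ is cocontinuous; and it is closed under retracts, since ($f_!$ being fully faithful) an idempotent on $f_!X$ descends to a necessarily idempotent endomorphism of $X$ whose splitting (an absolute colimit) is preserved by $f_!$. This essential image contains every representable $\mathcal{D}(fc,-)$; and whenever $d$ is a retract of $fc$, say $r\colon fc\to d$ and $s\colon d\to fc$ with $rs=\id_d$, the induced maps $\mathcal{D}(s,-)$ and $\mathcal{D}(r,-)$ exhibit $\mathcal{D}(d,-)$ as a retract of $\mathcal{D}(fc,-)$, so $\mathcal{D}(d,-)$ lies in the essential image as well. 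Since every object of $\sets^{\mathcal{D}}$ is a colimit of representables $\mathcal{D}(d,-)$, the functor $f_!$ is essentially surjective, hence an equivalence, and therefore the adjunction $(f_!,f^{*})$ is an equivalence of categories.

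For the ``only if'' direction, assume $(f_!,f^{*})$ is an equivalence. Then $f_!$ is fully faithful, so $f$ is fully faithful by the first paragraph. To see that $f$ is essentially surjective up to retracts, fix an object $d$ of $\mathcal{D}$. The representable $\mathcal{D}(d,-)$ is a projective object of $\sets^{\mathcal{D}}$, because $\Hom_{\sets^{\mathcal{D}}}(\mathcal{D}(d,-),-)$ is evaluation at $d$, which preserves epimorphisms (epimorphisms in $\sets^{\mathcal{D}}$ are the objectwise surjections). Since $f_!$ is an equivalence it reflects projectivity, so $\mathcal{D}(d,-)\cong f_!X$ for some projective $X\in\sets^{\mathcal{C}}$. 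Choosing an epimorphism $\coprod_{i\in I}\mathcal{C}(c_i,-)\twoheadrightarrow X$ and splitting it realizes $X$ as a retract of $\coprod_{i\in I}\mathcal{C}(c_i,-)$, and applying $f_!$ realizes $\mathcal{D}(d,-)$ as a retract of $\coprod_{i\in I}\mathcal{D}(fc_i,-)$. The corresponding section $\mathcal{D}(d,-)\to\coprod_{i\in I}\mathcal{D}(fc_i,-)$ is classified, by the Yoneda lemma, by a single element of $\coprod_{i\in I}\mathcal{D}(fc_i,d)$, hence factors through one summand $\mathcal{D}(fc_{i_0},-)$; unwinding the retraction on that summand through Yoneda produces morphisms $fc_{i_0}\to d$ and $d\to fc_{i_0}$ whose composite is $\id_d$. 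Thus $d$ is a retract of $fc_{i_0}$, as required.

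The one step that is not purely formal is the essential-surjectivity claim in the ``if'' direction: because $d$ is merely a retract of, rather than isomorphic to, an object of the form $fc$, one cannot lift $\mathcal{D}(d,-)$ directly along $f_!$, and the resolution is to transport an idempotent splitting, using that $\sets^{\mathcal{C}}$ is idempotent-complete together with the cocontinuity and — crucially — the full faithfulness of $f_!$; this is exactly where the two hypotheses on $f$ are used in tandem. One could alternatively organize the whole argument around the observation that $\sets^{\mathcal{C}}$ depends only on the idempotent completion of $\mathcal{C}$ and that $f$ induces an equivalence of idempotent completions precisely under the stated conditions, but the direct argument above is shorter.
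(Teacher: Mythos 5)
The paper does not actually prove this statement: it is recorded as a classical result with citations to Elkins--Zilber and Borceux--Dejean, and the argument is left implicit. Your proof is correct and complete, so it fills a gap the paper leaves to the references rather than diverging from a given proof.

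It is still worth comparing your argument with the paper's own proof of the analogous result for algebraic theories (Theorem~\ref{theo:moritaAtSet}), since that is clearly the template the authors have in mind. The ``if'' direction is treated essentially the same way in both places (everything is a colimit of (co)representables, $f_!$ is cocontinuous, and the essential image is closed under retracts because split idempotents are absolute colimits). For the ``only if'' direction, the paper's strategy is to pass to the Yoneda embedding, observe that $\Hom(X,-)$ preserves all colimits because it is identified with evaluation at a representing object, write $X$ as a colimit of corepresentables, and then apply $\Hom(X,-)$ to that colimit and chase the identity element. You instead use the weaker fact that $X$ is \emph{projective} (reflected along the equivalence $f_!$), cover $X$ by a coproduct of representables, split the epimorphism, and then use Yoneda to locate the section in a single summand. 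Both arguments produce $X$ as a retract of a representable; yours is a bit more elementary in that it only invokes preservation of epimorphisms rather than of all colimits, while the paper's version generalizes more directly to the algebraic-theory setting where one must stay inside the reflective subcategory of product-preserving functors. The final Yoneda unwinding step, which you spell out carefully (the section factors through one summand, yielding morphisms $fc_{i_0}\to d$ and $d\to fc_{i_0}$ composing to the identity), is exactly what the paper compresses into ``applying $f_!$ to this retraction and using Yoneda.'' No gaps.
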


A functor satisfying the hypothesis of Theorem~\ref{thm:moritaCat} is called a \emph{Morita equivalence}.
In \cite{DK87}, Dwyer and Kan extended this characterization to the homotopical setting. Let $\ssets$ be the category of simplicial sets equipped with the Kan--Quillen model structure and let $\mathcal{C}$ be a small simplicial category, that is, a category enriched in simplicial sets. The category of simplicial functors $\ssets^{\mathcal{C}}$ can be endowed with the \emph{projective model structure}, in which the weak equivalences and fibrations are defined levelwise. The projective model structure models the \emph{homotopy representations} of $\mathcal{C}$.

To every simplicial category~$\mathcal{C}$ we can associate (functorially) a category $\pi_0(\mathcal{C})$, called the \emph{path component category} of $\mathcal{C}$, which has the same objects as $\mathcal{C}$, and whose set of morphisms from $x$ to $y$ in $\pi_0(\mathcal{C})$ is $\pi_0(\mathcal{C}(x,y))$.

We call a functor between small simplicial categories $f\colon \mathcal{C} \to \mathcal{D}$ a \emph{Morita weak equivalence} if it is \emph{homotopically fully faithful} (that is, $f\colon \mathcal{C}(x, y) \to \mathcal{D}(f(x), f(y))$ is a weak equivalence for every $x,y$ in $\mathcal{C}$) and \emph{homotopically essentially surjective up to retracts} (that is, if $\pi_0(f)$ is essentially surjective up to retracts).

The result of Dwyer and Kan \cite[Theorem 2.1]{DK87} can be stated as follows (even though it was originally formulated not making use of the language of model categories).
\begin{theo}[Dwyer--Kan]\label{thm:DK}
Let $f\colon \mathcal{C}\to \mathcal{D}$ be a functor between small simplicial categories. The following are equivalent:
    \begin{itemize}
    \item[{\rm (i)}] The induced Quillen adjunction between the projective model structures
    \[
    f_!:\ssets^{\mathcal{C}}\myrightleftarrows{\rule{0.4cm}{0cm}} \ssets^{\mathcal{D}}:f^*
    \]
    is a Quillen equivalence.
    \item[{\rm (ii)}] The functor $f$ is homotopically fully faithful and homotopically essentially surjective up to retracts.
    \end{itemize}
\end{theo}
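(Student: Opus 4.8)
The plan is to rephrase ``Quillen equivalence'' as two standard conditions on the adjoint pair and match them against the two halves of~(ii), running the homotopical analogue of the proof of Theorem~\ref{thm:moritaCat} with homotopy colimits in place of colimits. Three preliminary facts set this up. First, since $f^*$ is precomposition with $f$, it preserves all weak equivalences and all fibrations (defined objectwise), so $f^*$ is left Quillen as well as right Quillen, its total right derived functor is $f^*$ itself, and $f^*$ preserves homotopy colimits (which in the projective model structure are computed objectwise). Second, the enriched Yoneda lemma gives, for each object $c$ of $\mathcal{C}$, that $\mathcal{C}(c,-)$ is projectively cofibrant, that $f_!\mathcal{C}(c,-)=\mathcal{D}(f(c),-)$, and that $[\mathcal{C}(c,-),Y]\cong\pi_0(Y(c))$ in $\mathrm{Ho}(\ssets^{\mathcal{C}})$ for $Y$ objectwise fibrant; as $\pi_0\colon\ssets\to\sets$ and evaluation both preserve homotopy colimits, the corepresentable $[\mathcal{C}(c,-),-]$ preserves all homotopy colimits, and under this isomorphism composition in $\mathrm{Ho}(\ssets^{\mathcal{C}})$ matches composition in $\pi_0(\mathcal{C})$, so the Yoneda embedding induces a fully faithful functor $\pi_0(\mathcal{C})\op\hookrightarrow\mathrm{Ho}(\ssets^{\mathcal{C}})$; in particular $\mathcal{D}(d,-)$ is a retract of $\mathcal{D}(f(c),-)$ in $\mathrm{Ho}(\ssets^{\mathcal{D}})$ exactly when $d$ is a retract of $f(c)$ in $\pi_0(\mathcal{D})$. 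Third, the projective structure is cofibrantly generated by the maps $\mathcal{C}(c,-)\otimes(\partial\Delta^n\hookrightarrow\Delta^n)$, so every cofibrant object of $\ssets^{\mathcal{C}}$ is a retract of a homotopy colimit of representables; that is, $\mathrm{Ho}(\ssets^{\mathcal{C}})$ is generated under homotopy colimits (and hence retracts) by the $\mathcal{C}(c,-)$.

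For (ii)$\Rightarrow$(i) I would invoke the standard criterion: a Quillen pair is a Quillen equivalence if and only if the right adjoint reflects weak equivalences between fibrant objects and the derived unit $X\to f^*\bigl((f_!X)^{\mathrm{fib}}\bigr)$ is a weak equivalence for every cofibrant~$X$. Evaluating the derived unit at a representable $\mathcal{C}(c,-)$ and at an object $x$ yields, up to weak equivalence, the map $f\colon\mathcal{C}(c,x)\to\mathcal{D}(f(c),f(x))$ (a levelwise Kan replacement computes $(f_!\mathcal{C}(c,-))^{\mathrm{fib}}$), which is a weak equivalence by homotopical full faithfulness. Both $\mathrm{id}$ and $f^*\circ\mathbb{L}f_!$ preserve homotopy colimits, so the class of objects on which the derived unit is an isomorphism in $\mathrm{Ho}(\ssets^{\mathcal{C}})$ is closed under homotopy colimits and retracts; containing the representables, it is everything, which gives the unit condition. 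For the reflection condition, let $g\colon Y\to Y'$ be a map of objectwise fibrant diagrams with $f^*g$ a weak equivalence. For any object $d$ of $\mathcal{D}$, homotopical essential surjectivity up to retracts furnishes $c$ and classes $\alpha\in\pi_0\mathcal{D}(d,f(c))$, $\beta\in\pi_0\mathcal{D}(f(c),d)$ with $\beta\alpha=\mathrm{id}_d$; acting on the objectwise Kan complexes $Y,Y'$ (path components of mapping spaces act well-definedly up to homotopy) these exhibit $g_d\colon Y(d)\to Y'(d)$ as a retract, in the arrow category of $\mathrm{Ho}(\ssets)$, of the weak equivalence $g_{f(c)}\colon Y(f(c))\to Y'(f(c))$, hence a weak equivalence. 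Thus $g$ is a weak equivalence and $(f_!,f^*)$ is a Quillen equivalence.

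For (i)$\Rightarrow$(ii): the derived unit is now a weak equivalence on all cofibrant objects, in particular on each $\mathcal{C}(c,-)$, and the computation above turns this into homotopical full faithfulness. For essential surjectivity up to retracts, fix an object $d$ of $\mathcal{D}$ and let $X$ be a cofibrant model of $f^*\mathcal{D}(d,-)$; since the Quillen equivalence makes $\mathbb{L}f_!$ an equivalence with quasi-inverse $f^*$, we have $\mathbb{L}f_!(X)\simeq\mathcal{D}(d,-)$. Writing $X$ as a retract of a homotopy colimit $\operatorname{hocolim}_i\mathcal{C}(c_i,-)$ of representables and applying the homotopy-colimit-preserving functor $\mathbb{L}f_!$ shows that $\mathcal{D}(d,-)$ is a retract in $\mathrm{Ho}(\ssets^{\mathcal{D}})$ of $\operatorname{hocolim}_i\mathcal{D}(f(c_i),-)$. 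Since $[\mathcal{D}(d,-),-]$ preserves that homotopy colimit, the splitting $\mathcal{D}(d,-)\to\operatorname{hocolim}_i\mathcal{D}(f(c_i),-)$ factors through some $\mathcal{D}(f(c_i),-)$, and composing with the retraction exhibits $\mathcal{D}(d,-)$ as a retract of $\mathcal{D}(f(c_i),-)$ in $\mathrm{Ho}(\ssets^{\mathcal{D}})$; by the Yoneda translation, $d$ is a retract of $f(c_i)$ in $\pi_0(\mathcal{D})$. Hence $\pi_0(f)$ is essentially surjective up to retracts, completing~(ii).

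I expect the main obstacle to be the ``generation'' input -- that every cofibrant object of $\ssets^{\mathcal{C}}$ is a retract of a homotopy colimit of representables, compatibly enough with $\mathbb{L}f_!$ and with the corepresentable functors for the arguments above to go through -- together with the careful bookkeeping identifying retract data in $\mathrm{Ho}(\ssets^{\mathcal{D}})$ with retract data in $\pi_0(\mathcal{D})$ via the enriched Yoneda isomorphism, the delicate point being that maps between representables in the homotopy category need not come from honest natural transformations, only from morphisms in the path-component category. Everything else is formal once one knows the projective model structure is cofibrantly generated by the $\mathcal{C}(c,-)\otimes(\partial\Delta^n\hookrightarrow\Delta^n)$ and that $f^*$ is exact.
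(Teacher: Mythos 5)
The paper does not prove this statement; it cites \cite[Theorem 2.1]{DK87}, and the Dwyer--Kan proof (echoed later in the paper in the proof of Proposition~\ref{prop:reqalg}) runs through the explicit \emph{homotopy left Kan extension} $\tilde{f_!}$ built from the two-sided bar construction. Concretely, Dwyer and Kan show that $\mathcal{D}(f(c),-)$ is a strong deformation retract of $\tilde{f_!}\mathcal{C}(c,-)$ (their Proposition 3.3), read full faithfulness off this deformation retract, and extract essential surjectivity up to retracts from an explicit $0$-simplex $(j,c_0,q)$ of $\tilde{f_!}f^*\mathcal{D}(d,-)(d)$ lying over the identity component. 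Your proposal is correct in its broad outlines but takes a genuinely different route: you avoid the bar construction entirely and instead exploit that every projectively cofibrant object is a retract of a cell complex, hence of a homotopy colimit of representables, combined with the facts that $\mathbb{L}f_!$, evaluation, and $\pi_0$ all preserve homotopy colimits and that $[\mathcal{C}(c,-),-]\cong\pi_0\bigl((-)(c)\bigr)$. This is more abstract and more readily portable to other enriching categories, while the Dwyer--Kan route has the advantage of producing the specific weak-equivalence-preserving functor $\tilde{f_!}$ that the rest of this paper (Propositions~\ref{prop:weaktoweak} and~\ref{prop:weakmod}) needs.

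Two points of caution. First, the claim that $f^*$ ``is left Quillen as well as right Quillen'' is not correct in general (precomposition need not preserve projective cofibrations); this is harmless since the load-bearing fact is only that $f^*$ preserves homotopy colimits, which follows, as your parenthetical says, from homotopy colimits being computed objectwise, and from $f^*$ preserving all weak equivalences so that $\mathbb{R}f^*=f^*$. Second, the ``factors through some $\mathcal{D}(f(c_i),-)$'' step deserves more care than a one-liner: a cell complex is a transfinite composition of pushouts of coproducts of $\mathcal{C}(c,-)\otimes\Delta^n$'s, so it is a \emph{nested} homotopy colimit, and one must check (by transfinite induction, say, using that $\pi_0\circ\mathrm{ev}_d$ turns transfinite compositions of cofibrations into filtered colimits of sets, homotopy pushouts into pushouts, and coproducts into coproducts) that a class in $[\mathcal{D}(d,-),\mathbb{L}f_!X]$ is supported on a single attached cell. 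You flag this yourself as the main obstacle, and rightly so, but it does go through.
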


The goal of this paper is to extend Theorem~\ref{thm:moritaCat} and Theorem~\ref{thm:DK} to the case of (simplicial) multi-sorted algebraic theories and (simplicial) coloured operads.

Algebraic theories and coloured operads can be regarded as extensions of the concept of category that present algebraic structures with operations with multiple inputs and one output in cartesian and symmetric monoidal categories, respectively, in contrast with categories that can present structures with operations with one input and one output only (but, consequently, admit representations in arbitrary categories that do not need to be cartesian or symmetric monoidal).

Representations of algebraic theories and operads are called \emph{algebras}. The \emph{category of algebras} over an
algebraic theory $\mathcal{T}$ or operad $\mathcal{O}$ will be denoted by $\Alg(\mathcal{T})$ or $\Alg(\mathcal{O})$, respectively. Every morphism of algebraic theories $f\colon \mathcal{S} \to \mathcal{T}$ or operads $f\colon \mathcal{O} \to \mathcal{P}$ induces an adjunction:
\[
f_!:\Alg(\mathcal{S}) \myrightleftarrows{\rule{0.4cm}{0cm}}\Alg(\mathcal{T}): f^* \quad\mbox{or}\quad f_!:\Alg(\mathcal{O}) \myrightleftarrows{\rule{0.4cm}{0cm}}\Alg(\mathcal{P}): f^* ,
\]
respectively. Our generalizations of Theorem \ref{thm:moritaCat} are Theorem~\ref{theo:moritaAtSet} and Theorem~\ref{theo:moritaOpSet}, and can be subsumed as follows: 
\begin{theo}
    Let $f$ be a morphism of multi-sorted algebraic theories or coloured operads in $\sets$. The following are equivalent:
    \begin{itemize}
        \item[{\rm (i)}] The induced adjunction $(f_!, f^*)$ is an equivalence of categories.
        \item[{\rm (ii)}] The morphism $f$ is fully faithful and essentially surjective up to retracts.
    \end{itemize}
\end{theo}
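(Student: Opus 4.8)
The plan is to reduce both statements to the classical case of ordinary small categories, namely Theorem~\ref{thm:moritaCat}. I would treat multi-sorted algebraic theories first and then deduce the case of coloured operads from it. Recall that $\Alg(\mathcal{T})$ is the category of product-preserving functors from the theory $\mathcal{T}$ to $\sets$; it is a reflective subcategory of the functor category $\sets^{\mathcal{T}}$, and a morphism $f\colon\mathcal{S}\to\mathcal{T}$ of theories induces compatible adjunctions — with $f^{*}$ given by restriction along $f$ and $f_{!}$ its left adjoint — on the functor categories as well as on the algebra categories. For an arity $w$, write $F_{w}$ for the free algebra it represents, so that $\Hom_{\Alg(\mathcal{T})}(F_{w},A)\cong A(w)$; each $F_{w}$ is a finite coproduct of the free algebras on a single sort, and the $F_{w}$ form a set of compact projective generators of $\Alg(\mathcal{T})$. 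Two consequences of the Yoneda lemma are used throughout: $f_{!}(F_{w}^{\mathcal{S}})\cong F_{f(w)}^{\mathcal{T}}$, with $\Hom_{\Alg(\mathcal{T})}(F_{w},F_{w'})$ a hom-set of the underlying category of $\mathcal{T}$ and the transition map the one coming from $f$; and $F_{w}$ is a retract of $F_{w'}$ in $\Alg(\mathcal{T})$ if and only if $w$ is a retract of $w'$ in that underlying category.

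For the implication (i) $\Rightarrow$ (ii), assume $(f_{!},f^{*})$ is an equivalence. An equivalence preserves and reflects compact projective objects, which in these algebra categories are exactly the retracts of the free algebras on finite arities. Since $f_{!}$ is fully faithful, the maps $\Hom_{\Alg(\mathcal{S})}(F_{w}^{\mathcal{S}},F_{w'}^{\mathcal{S}})\to\Hom_{\Alg(\mathcal{T})}(F_{f(w)}^{\mathcal{T}},F_{f(w')}^{\mathcal{T}})$ are bijections, which under the Yoneda identification above amounts precisely to full faithfulness of $f$. For essential surjectivity up to retracts: for any sort $t$ of $\mathcal{T}$, the algebra $F_{t}^{\mathcal{T}}$ is compact projective, hence isomorphic to $f_{!}(X)$ with $X$ a compact projective $\mathcal{S}$-algebra, hence a retract of some $F_{w}^{\mathcal{S}}$; then $F_{t}^{\mathcal{T}}$ is a retract of $f_{!}(F_{w}^{\mathcal{S}})\cong F_{f(w)}^{\mathcal{T}}$, so $t$ is a retract of $f(w)$ in $\mathcal{T}$, which is the required condition.

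For (ii) $\Rightarrow$ (i), assume $f$ is fully faithful and essentially surjective up to retracts. Then the underlying functor $\mathcal{S}\to\mathcal{T}$ is fully faithful and essentially surjective up to retracts as a functor of ordinary categories — each object of $\mathcal{T}$ is a product of sorts, every sort is a retract of some $f(w)$, hence the object is a retract of the corresponding image — so by Theorem~\ref{thm:moritaCat} the adjunction $f_{!}\colon\sets^{\mathcal{S}}\to\sets^{\mathcal{T}}$ is an equivalence of categories, with quasi-inverse $f^{*}$. It remains to check that it restricts to an equivalence between the reflective subcategories of algebras. One inclusion is immediate: precomposing a product-preserving functor with the product-preserving functor $f$ again gives a product-preserving functor, so $f^{*}$ carries $\Alg(\mathcal{T})$ into $\Alg(\mathcal{S})$. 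For the other, I would argue that every product-preserving functor $\mathcal{S}\to\sets$ extends, essentially uniquely, to a product-preserving functor on $\mathcal{T}$: since $\sets$ is idempotent-complete, a product-preserving functor out of $\mathcal{S}$ extends along the embedding of $\mathcal{S}$ into its idempotent completion by splitting idempotents, and the extension is again product-preserving; and full faithfulness together with essential surjectivity up to retracts identify the idempotent completions of $\mathcal{S}$ and of $\mathcal{T}$ compatibly with $f$, so that restriction along $f$ induces an equivalence between the categories of product-preserving $\sets$-valued functors. Hence $f^{*}\colon\Alg(\mathcal{T})\to\Alg(\mathcal{S})$ is an equivalence, and therefore $(f_{!},f^{*})$ is an adjoint equivalence on algebras. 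I expect this descent step — reconciling the objects newly created by ``up to retracts'' with the product-preservation that defines algebras — to be the main point requiring care; passing through the idempotent completion, where the comparison is transparent, is what should make it routine.

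The case of coloured operads is then deduced from the case of theories via the associated multi-sorted theory $\oat{\mathcal{O}}$ of a coloured operad $\mathcal{O}$. This construction is functorial and comes with a natural equivalence $\Alg(\mathcal{O})\simeq\Alg(\oat{\mathcal{O}})$ compatible with the adjunctions induced by a morphism $g$, so that $(g_{!},g^{*})$ is an equivalence if and only if $(\oat{g}_{!},\oat{g}^{*})$ is; and the notions ``fully faithful'' and ``essentially surjective up to retracts'' for a morphism of operads are set up so as to correspond, under $\oatf$, to the same notions for the induced morphism of theories — the unary operations and the free algebras on one generator of each colour of $\mathcal{O}$ being sent to the corresponding data of $\oat{\mathcal{O}}$. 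With these compatibilities in hand the operad statement follows at once from the theory statement; here too the only delicate point is the matching of ``essentially surjective up to retracts'', where a colour of the target operad must be understood as a retract in the operadic (Cauchy) sense — equivalently, as a retract of a product of images inside the associated theory.
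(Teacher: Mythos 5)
Your treatment of the algebraic-theory half is correct and takes a somewhat different route from the paper's proof of Theorem~\ref{theo:moritaAtSet}. For the direction ``equivalence of algebras implies Morita,'' you invoke the characterization of compact projective objects of $\Alg(\mathcal{T})$ as retracts of free algebras on finite arities, together with the fact that an equivalence preserves and reflects compact projectives; the paper reaches the same conclusion by an explicit argument through the Yoneda embedding, showing that $\Alg(\mathcal{S})(X,-)$ commutes with the canonical colimit presentation of $X$ so that $X$ splits off a corepresentable. These are essentially the same fact phrased at different levels of abstraction, and yours is arguably cleaner. For the converse, the paper simply observes that $f_!$ already restricts to algebras at the level of functor categories (left Kan extension along a product-preserving functor between categories with finite products preserves product-preservation when the target is cartesian closed), so the restricted adjunction inherits the equivalence from Theorem~\ref{thm:moritaCat} automatically; your longer route through the idempotent completion is also valid and isolates the conceptual point, but it is extra work compared to this direct observation.

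The operadic half, however, has a genuine gap, one you flag yourself without resolving. You assert that for a morphism $f$ of coloured operads, essential surjectivity up to retracts of $j^*(f)$ is ``equivalent'' to essential surjectivity up to retracts of $\oat{f}$. One implication is trivial, since $j^*(\mathcal{P})$ embeds in $\oat{\mathcal{P}}$. The substantial implication is: if a colour $d$ of $\mathcal{P}$ is a retract of a word $f(\str{c})=(f(c_1),\ldots,f(c_n))$ inside the theory $\oat{\mathcal{P}}$, then $d$ is already a retract of a single $f(c_j)$ in the underlying unary category. This is not a general fact about algebraic theories --- one can easily build a theory in which some object is a retract of a product but of no factor --- it is special to theories of the form $\oat{\mathcal{O}}$, and its proof requires the explicit description~\eqref{mor_TO} of morphisms in $\oat{\mathcal{O}}$ as coproducts of operad spaces quotiented by stabilizer subgroups, together with a careful unravelling of the section--retraction pair to locate the single colour through which it factors. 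This is exactly the paper's lemma preceding Corollary~\ref{coro:key_retract} and is where the operad-specific combinatorics actually enters. A smaller but still necessary verification your sketch also omits is that $f$ is fully faithful if and only if $\oat{f}$ is (Lemma~\ref{lem:fully_faith}); the nontrivial direction uses that the identity-ordering summand of $\oat{\mathcal{O}}(\str{c},c_0)$ recovers $\mathcal{O}(\str{c};c_0)$ on the nose. Without these two verifications the reduction to the theory case is not complete.
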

The precise definition of fully faithful and essentially surjective up to retracts in each case will be given in the corresponding sections of the paper.

In the homotopical setting, we consider simplicial multi-sorted algebraic theories and simplicial operads. For every simplicial algebraic theory $\mathcal{T}$ or simplicial operad $\mathcal{O}$, its category of algebras in $\ssets$, that we denote by $\sAlg{\mathcal{T}}$ or $\sAlg{\mathcal{O}}$, respectively, admit the projective model structure, which models the homotopy theory of the respective algebras.

As in the case of simplicial categories, every morphism of simplicial algebraic theories $f\colon \mathcal{S} \to \mathcal{T}$ or simplicial operads $f\colon \mathcal{O} \to \mathcal{P}$ induces a Quillen adjunction between the projective model structures:
\[
f_!:\sAlg{\mathcal{S}} \myrightleftarrows{\rule{0.4cm}{0cm}}\sAlg{\mathcal{T}}: f^* \quad\mbox{or}\quad f_!:\sAlg{\mathcal{O}} \myrightleftarrows{\rule{0.4cm}{0cm}}\sAlg{\mathcal{P}}: f^* ,\]
respectively. The generalization of Theorem \ref{thm:DK} for algebraic theories, presented in Section \ref{sect:ch_colour} as Corollary \ref{cor:char_Morita_at} can be stated as follows:
\begin{theo}
 Let $f$ be a morphism of simplicial  multi-sorted algebraic theories. The following are equivalent:
    \begin{itemize}
        \item[{\rm (i)}] The adjunction $(f_!, f^*)$ is a Quillen equivalence between the projective model structures on the corresponding categories of simplicial algebras.
        \item[{\rm (ii)}] The morphism $f$, seen as a functor between simplicial categories, is a Morita weak equivalence.
    \end{itemize}
\end{theo}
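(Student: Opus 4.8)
The plan is to verify the standard criterion for a Quillen equivalence, after fixing notation. For an object $a$ of a simplicial algebraic theory $\mathcal{T}$ write $F_{a}=\mathcal{T}(a,-)$ for the corepresentable $\mathcal{T}$-algebra, so that $\Map_{\sAlg{\mathcal{T}}}(F_{a},Y)\cong Y(a)$ for every algebra $Y$; for $a$ a basic object these are the free algebras on one generator, they are cofibrant and compact, and they detect weak equivalences, so the projective model structure is cofibrantly generated with generating cofibrations $F_{s}\otimes(\partial\Delta^{n}\hookrightarrow\Delta^{n})$. A direct adjunction computation gives $f_{!}F_{a}=F_{f(a)}$, hence $f^{*}f_{!}F_{a}=\mathcal{T}(f(a),f(-))$, so the unit $\eta_{F_{s}}$ evaluated at an object $s'$ is exactly the map on mapping spaces $\mathcal{S}(s,s')\to\mathcal{T}(f(s),f(s'))$ induced by $f$. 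Since $f^{*}$ is computed levelwise it preserves all weak equivalences, and therefore $(f_{!},f^{*})$ is a Quillen equivalence if and only if (a) $f^{*}$ reflects weak equivalences between fibrant objects and (b) the unit $\eta_{A}$ is a weak equivalence for every cofibrant algebra $A$.

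For the implication (ii)$\Rightarrow$(i) I would proceed as follows. For (b): the class of cofibrant $A$ for which $\eta_{A}$ is a weak equivalence contains the generators $F_{s}$ — this is precisely homotopical full faithfulness of $f$, by the identification above — and is closed under tensoring with simplicial sets, filtered colimits, and pushouts of generating cofibrations; here one uses that $f_{!}$ preserves colimits and cofibrations, that $f^{*}$ preserves weak equivalences and filtered colimits, and that the forgetful functor to diagrams of simplicial sets over the sorts carries pushouts of algebras along free cofibrations to homotopy pushouts. A cellular induction then yields (b) for all cofibrant $A$. For (a): given $\varphi\colon B\to B'$ between fibrant $\mathcal{T}$-algebras with $f^{*}\varphi$ a levelwise weak equivalence, it suffices — since algebras are product preserving and every object of $\mathcal{T}$ is a product of basic objects — to show $\varphi$ is a weak equivalence at each basic object $t$ of $\mathcal{T}$, and we already know $\varphi$ is a weak equivalence at $f(a)$ for every object $a$ of $\mathcal{S}$. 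By homotopical essential surjectivity up to retracts, $t$ is a retract in $\pi_{0}(\mathcal{T})$ of some such $f(a)$; lifting the retraction data to $\mathcal{T}$ and applying the simplicial functors $B$ and $B'$ exhibits $\varphi_{t}$, in the homotopy category of simplicial sets, as a retract of $\varphi_{f(a)}$. Since a retract of an isomorphism is an isomorphism, $\varphi_{t}$ is a weak equivalence, and (a) follows.

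For the implication (i)$\Rightarrow$(ii): a Quillen equivalence induces equivalences of derived mapping spaces between cofibrant sources and fibrant targets. Applying this to the $F_{a}$, which are cofibrant with $\mathbb{L}f_{!}F_{a}=F_{f(a)}$, and using the coYoneda identification $R\Map_{\sAlg{\mathcal{T}}}(F_{a},F_{b})\simeq\mathcal{T}(b,a)$ (valid because $\sAlg{\mathcal{T}}$ is a full simplicial subcategory of $\ssets^{\mathcal{T}}$) together with fibrant replacement, this gives that $\mathcal{S}(b,a)\to\mathcal{T}(f(b),f(a))$ is a weak equivalence for all objects $a,b$, i.e.\ that $f$ is homotopically fully faithful. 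For essential surjectivity up to retracts: the induced equivalence of homotopy categories preserves compact objects and takes the compact generators $F_{a}$ to the $F_{f(a)}$; since the compact objects form the idempotent completion of the full subcategory on these generators — which, by the coYoneda computation, is $\pi_{0}(\mathcal{T})\op$ — the equivalence restricts to an equivalence between the idempotent completions of $\pi_{0}(\mathcal{S})\op$ and $\pi_{0}(\mathcal{T})\op$ extending $\pi_{0}(f)\op$, and this forces $\pi_{0}(f)$ to be essentially surjective up to retracts. Hence $f$ is a Morita weak equivalence.

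The step I expect to be the main obstacle is the bookkeeping around the ``up to retracts'' clauses: in (ii)$\Rightarrow$(i), checking carefully that $\varphi_{t}$ is a retract of $\varphi_{f(a)}$ compatibly with $\varphi$ using only that $B,B'$ are simplicial functors and $\pi_{0}(\mathcal{T})$-level retraction data; and in (i)$\Rightarrow$(ii), identifying the compact part of the homotopy category of algebras with the idempotent completion of $\pi_{0}$ of the theory so as to read off essential surjectivity up to retracts. The cellular reduction of (b) to the generators, though standard in spirit, also requires a careful treatment of pushouts of algebras along free cofibrations. An alternative organization, closer to the structure of Theorem~\ref{thm:DK}, would be to exhibit $\sAlg{\mathcal{T}}$, naturally in $\mathcal{T}$, as the left Bousfield localization of $\ssets^{\mathcal{T}}$ at the product-cone maps (rigidification of homotopy algebras) and then deduce the theorem from Theorem~\ref{thm:DK}, using that for a Morita weak equivalence $f$ the functor $f_{!}$ matches the two localizing sets up to retracts, so that the Quillen equivalence of diagram categories descends to the localizations.
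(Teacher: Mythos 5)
There is a genuine gap in your (i)$\Rightarrow$(ii) argument for homotopical essential surjectivity. You assert that the compact objects of the homotopy category of $\sAlg{\at{T}}$ form the idempotent completion of the full subcategory on the corepresentables $F_a$. This is false: compact objects are closed under finite (homotopy) colimits of the $F_a$, not only under retracts. Already at the $\sets$-level, for $\at{T}$ the theory of abelian groups the compact (finitely presentable) algebra $\mathbb{Z}/2$ is not a retract of a finitely generated free abelian group; homotopically, for $\at{T}$ the theory of pointed objects, $\sAlg{\at{T}}$ is pointed simplicial sets, the $F_a$ are finite discrete pointed sets, and $S^1$ is compact but is not a retract of a coproduct of discrete spaces in the homotopy category. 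What is true, and what the paper's set-level proof of Theorem~\ref{theo:moritaAtSet} implicitly uses, is a finer statement: the objects whose corepresented functor preserves \emph{sifted} (homotopy) colimits are exactly the retracts of corepresentables, and the specific object $X$ with $\mathbb{L}f_!X\simeq F_{\mathbf{t}}$ inherits this property through the derived equivalence; writing $X$ as a sifted (homotopy) colimit of corepresentables then yields the retraction. You would need to replace ``compact'' by this sifted-colimit characterization, or argue as the paper does via Dwyer--Kan (Proposition~\ref{prop:reqalg}), where the retraction $d\to f(c_0)\to d$ in $\pi_0$ is read off directly from a $0$-simplex of the bar model of $\tilde{f_!}\tilde{f^*}h_d(d)$ lying over $[\id_d]$, with no appeal to compactness.

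Your (ii)$\Rightarrow$(i) cellular induction also rests on an unstated but essential lemma: that a pushout of $\at{T}$-algebras along a generating cofibration $F_s\otimes\partial\Delta^n\to F_s\otimes\Delta^n$ is, after forgetting to $\ssets^{\at{T}}$, a levelwise homotopy pushout. Since $f^*$ is computed levelwise but does not preserve pushouts of algebras, this is precisely what lets the weak-equivalence conclusion propagate across the pushout square. The lemma is true in the present setting (it is close to what Rezk establishes in~\cite{Re02} to transfer the model structure), but it is a substantive ingredient, not a formality, and a complete write-up would have to supply it. The paper's route sidesteps both difficulties: it replaces $\sAlg{\at{T}}$, up to Quillen equivalence, by the left Bousfield localization $\ssets^{\at{T}}_{\wm}$ of the projective model structure at the product-cone maps (Badzioch--Bergner, Theorem~\ref{thm:ber_bad}), invokes the original Dwyer--Kan Quillen equivalence for the unlocalized diagram categories, and descends it to the localizations. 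The nontrivial technical input is Proposition~\ref{prop:weakmod}, that the homotopy left Kan extension $\tilde{f_!}$ carries weak models to weak models, proved via explicit simplicial homotopies on the bar construction. This is essentially the ``alternative organization'' you sketch at the end; in it, preservation of weak models plays the role you were imagining for ``matching the localizing sets up to retracts.''
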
 
The generalization of Theorem \ref{thm:DK} that we obtain in the operadic case is the following:
\begin{theo}\label{thm:moritasOp}
Let $f$ be a morphism between $\Sigma$-cofibrant simplicial coloured operads. The following are equivalent:
    \begin{itemize}
        \item[{\rm (i)}]The adjunction $(f_!, f^*)$ is a Quillen equivalence between the projective model structures on the corresponding categories of simplicial algebras.
        \item[{\rm (ii)}] The morphism $f$ is homotopically fully faithful and homotopically essentially surjective up to retracts.
    \end{itemize}
\end{theo}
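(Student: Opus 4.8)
The plan is to deduce Theorem~\ref{thm:moritasOp} from the characterization already obtained for simplicial algebraic theories (Corollary~\ref{cor:char_Morita_at}) by passing from a coloured operad to its associated cartesian theory. To a simplicial $C$-coloured operad $\mathcal{O}$ one associates the simplicial multi-sorted algebraic theory $\mathcal{T}_\mathcal{O}$ whose sorts are the colours of $\mathcal{O}$, whose objects are finite sequences of colours, and whose mapping spaces are the values of free $\mathcal{O}$-algebras: for $\underline c=(c_1,\dots,c_m)$ and a target colour $d$ one has $\mathcal{T}_\mathcal{O}(\underline c,d)\cong\coprod_{[\alpha]}\mathcal{O}(c_{\alpha(1)},\dots,c_{\alpha(r)};d)\big/\mathrm{Stab}(\alpha)$, the coproduct ranging over iso-classes of functions $\alpha$ from a finite set $\{1,\dots,r\}$ to $\{1,\dots,m\}$, and $\mathcal{T}_\mathcal{O}(\underline c,\underline d)=\prod_k\mathcal{T}_\mathcal{O}(\underline c,d_k)$. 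This is functorial, sending $\morp{f}{\mathcal{O}}{\mathcal{P}}$ to a sort-preserving $\morp{\mathcal{T}_f}{\mathcal{T}_\mathcal{O}}{\mathcal{T}_\mathcal{P}}$, and there is an identification of categories $\sAlg{\mathcal{O}}\cong\sAlg{\mathcal{T}_\mathcal{O}}$ over $\ssets^{C}$ carrying $(f_!,f^*)$ to $((\mathcal{T}_f)_!,(\mathcal{T}_f)^*)$. As both projective model structures are transferred from $\ssets^{C}$ along the forgetful functors, this identification is one of model categories; hence $(f_!,f^*)$ is a Quillen equivalence if and only if $((\mathcal{T}_f)_!,(\mathcal{T}_f)^*)$ is, which by Corollary~\ref{cor:char_Morita_at} happens if and only if $\mathcal{T}_f$ is a Morita weak equivalence of simplicial categories. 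So the task becomes to show that $\mathcal{T}_f$ is a Morita weak equivalence precisely when $f$ is homotopically fully faithful and homotopically essentially surjective up to retracts.

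First I would unwind homotopical full faithfulness of $\mathcal{T}_f$. Using the product decomposition of the mapping spaces this reduces to the condition that $\mathcal{T}_\mathcal{O}(\underline c,d)\to\mathcal{T}_\mathcal{P}(f\underline c,fd)$ is a weak equivalence for all $\underline c$ and $d$, and then the coproduct decomposition applies summand by summand, since the indexing set of iso-classes $[\alpha]$ depends only on the length of $\underline c$ and is thus the same for $\mathcal{O}$ and $\mathcal{P}$. Each summand is a \emph{strict} orbit $\mathcal{O}(\dots;d)/\mathrm{Stab}(\alpha)$, and here the $\Sigma$-cofibrancy hypothesis is used essentially: restriction along a subgroup $H\le\Sigma_r$ preserves projective cofibrancy, so for $\Sigma$-cofibrant $\mathcal{O}$ and $\mathcal{P}$ every such strict orbit computes the corresponding homotopy orbit, and homotopy orbits of (pointwise) free actions both preserve and reflect weak equivalences — the "reflect" direction because the Borel construction exhibits the underlying object as a homotopy pullback of the orbit along $EH\to BH$. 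Consequently $\mathcal{T}_f$ is homotopically fully faithful if and only if $\mathcal{O}(c_1,\dots,c_r;c)\to\mathcal{P}(fc_1,\dots,fc_r;fc)$ is a weak equivalence for all colours $c_1,\dots,c_r,c$ of $\mathcal{O}$, i.e. exactly when $f$ is homotopically fully faithful in the operadic sense.

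For the surjectivity condition, $\pi_0$ commutes with finite products of simplicial categories, so $\pi_0\mathcal{T}_\mathcal{P}$ is a discrete multi-sorted theory in which every object is a finite product of singleton objects $(d)$ and $\pi_0\mathcal{T}_f$ preserves these products; a short retract argument then shows $\pi_0\mathcal{T}_f$ is essentially surjective up to retracts if and only if, for every colour $d$ of $\mathcal{P}$, the singleton $(d)$ is a retract in $\pi_0\mathcal{T}_\mathcal{P}$ of some $\pi_0\mathcal{T}_f(\underline c)=\prod_i(fc_i)$. Reading this back through the free-algebra mapping spaces identifies it with the operadic notion of homotopical essential surjectivity up to retracts (each colour of $\mathcal{P}$ is, up to homotopy, a retract of a finite combination of colours in the image of $f$; equivalently, each free algebra $F_d^{\mathcal{P}}$ is a retract in $\mathrm{Ho}(\sAlg{\mathcal{P}})$ of a free algebra on a finite family of colours of the form $fc$). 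Combining the two translations with the chain of equivalences from the first paragraph finishes the proof. The main obstacle I anticipate is exactly this last translation: unlike the purely categorical situation of Theorem~\ref{thm:DK}, retracts in $\pi_0\mathcal{T}_\mathcal{P}$ genuinely involve \emph{products} of colours and do not reduce to retracts through unary operations, so one must be careful to see that the resulting condition on $f$ is the intended operadic one; the use of $\Sigma$-cofibrancy to make strict and homotopy orbits agree in both directions is the other delicate point. A more self-contained alternative is to imitate the Dwyer--Kan argument directly, with the free $\mathcal{O}$-algebras $F_c$ on single colours as cofibrant generators, $f_!F_c^{\mathcal{O}}\cong F_{fc}^{\mathcal{P}}$, and bar resolutions computing derived units and counits — but there the symmetric-group bookkeeping is precisely what $\Sigma$-cofibrancy controls, and routing the proof through algebraic theories keeps it to a minimum.
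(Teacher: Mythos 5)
Your overall strategy is exactly the paper's: transport the problem to simplicial algebraic theories via the functor $\oatf$ (which is your $\mathcal{T}_{(-)}$), invoke Corollary~\ref{cor:char_Morita_at}, and then translate ``Morita weak equivalence of $\oat{f}$'' back into the operadic conditions on $f$. Your treatment of full faithfulness is also correct, though the paper is slightly slicker for the reverse implication: instead of arguing that homotopy orbits of free actions reflect weak equivalences, it observes (diagram~(\ref{diag:ff})) that the summand of $\oat{\ope{O}}(\str{c},c_0)$ indexed by $\alpha=\id_{\str c}$ has trivial $\Sigma_\alpha$, so that summand of $\oat{f}$ is literally $f_{(\str c;c_0)}$; combined with the fact that a coproduct of maps is a weak equivalence iff each summand is, the backward direction needs no homotopy-orbit bookkeeping at all. ($\Sigma$-cofibrancy is genuinely used only in the forward direction, to pass a levelwise weak equivalence through the strict quotient.)

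The genuine gap is in the essential-surjectivity step, and you flagged the right danger but then stopped short of resolving it --- and, in fact, you stated the opposite of what is true. You write that retracts in $\pi_0\oat{\ope P}$ ``genuinely involve products of colours and do not reduce to retracts through unary operations.'' They \emph{do} reduce: this is precisely Lemma~\ref{coro:key_retract} (built on the lemma preceding it): if a single colour $c$ is a retract of a product $\str d=(d_1,\dots,d_n)$ inside $\oat{\ope O}$, then by unpacking the composite $[g,\str b,q]\circ\{[f_i,\str a_i,o_i]\}=\id_c$ one finds a single index $k$ with $\str a_{g(k)}=c$, all other $\str a_{g(i)}=[-]$, and from $o_{g(k)}\in\ope{O}(c;d_{g(k)})$ and a suitable $q'\in\ope{O}(d_{g(k)};c)$ one manufactures a unary retraction of $c$ off $d_{g(k)}$ in $j^*(\ope O)$. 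Without this combinatorial reduction, the condition you extract from the theory side (``$d$ is a retract of a product $\prod_i f(c_i)$ in $\pi_0\oat{\ope P}$'') is a priori weaker than the operadic definition (``$d$ is a retract of a single $f(c)$ in $\pi_0 j^*(\ope P)$''), and your chain of equivalences does not close. Once that lemma is in place, applying $\pi_0$ (which commutes with $\oatf$ because $\pi_0$ preserves products and colimits) gives exactly Proposition~\ref{prop:hom ess retract}, and the rest of your argument goes through.
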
 
The morphisms between simplicial operads that are homotopically fully faithful and homotopically essentially surjective up to retracts are also called \emph{Morita weak equivalences}.

The Cisinski--Moerdijk model structure on simplicial operads models the homotopy theory of $\infty$-operads. However, the homotopy theory of $\mathcal{O}$-algebras in the homotopy category of $\ssets$ for a simplicial operad $\mathcal{O}$, seen as an $\infty$-operad, is not equivalent to the homotopy theory of $\sAlg{\mathcal{O}}$ (with the projective model structure), but it is rather represented by $\sAlg{\widehat{\mathcal{O}}}$, where $\widehat{\mathcal{O}}$ is a cofibrant resolution of~$\mathcal{O}$.

Cofibrant simplicial operads represent homotopy invariant algebraic structures. Since every cofibrant operad is, in particular, $\Sigma$-cofibrant, Theorem \ref{thm:moritasOp} shows that, restricted to cofibrant operads, the class of Morita weak equivalences is precisely the class of morphisms which induce homotopical equivalences between the homotopy invariant algebraic structures presented. This implies that the Morita model structure for simplicial operads presented in \cite[Theorem 5.7]{CG19} can be rightly called the \emph{homotopy theory of homotopy invariant algebraic structures}. This conclusion was one of the main motivations to write this paper.

We would like to stress that our proof of Theorem~\ref{thm:moritasOp} is highly dependent on the corresponding result for algebraic theories, that is, Corollary~\ref{cor:char_Morita_at}. Moreover, Theorem~\ref{thm:moritasOp} provides a nice characterization of the weak equivalences of the Morita model structure for simplicial operads from \cite[Theorem 5.7]{CG19}.

\bigskip
\noindent {\bf Organization of the paper.}
We start with the case of multi-sorted algebraic theories in Section~\ref{sect:Morita_AT}. The first part of this section contains the results for algebraic theories in $\sets$ while the second part is concerned with the homotopical case.
We then start the transition from algebraic theories to operads. Section~\ref{sect:Oper_AT} recalls the strong link between the concept of operad and algebraic theory, explained by Kelly in \cite{Ke05}, that will permit us to recast the desired characterization of Morita equivalences between coloured operads to the corresponding characterization for algebraic theories.
Finally Section~\ref{sec:operads} contains the main results about coloured operads. We deduce the characterization of Morita equivalences for operads in sets and we conclude by proving the homotopical analogue for simplicial operads. 

\bigskip
\noindent {\bf Acknowledgements.} We would like to thank the referee for very useful comments and suggestions concerning the contents of the paper. The first named author was supported by the Spanish Ministry of Economy under the grants MTM2016-76453-C2-2-P (AEI/FEDER, UE) and RYC-2014-15328 (Ram\'on y Cajal Program).

\section{Morita equivalences of multi-sorted algebraic theories}\label{sect:Morita_AT}

\subsection{Multi-sorted algebraic theories}
We start by recalling the definition of multi-sorted algebraic theory. To encompass both the non-enriched and the simplicial case, we give the definition of multi-sorted algebraic theory enriched in an arbitrary cocomplete cartesian category $\mathcal{M}$. However, in the paper we will only consider the cases in which $\mathcal{M}$ is the category of sets or the category of simplicial sets.

Let $S$ be a set and let $\Str{S}$ denote the set of finite words or finite sequences over~$S$. Let $\Mcc$ be a cocomplete cartesian category. An \emph{$S$-sorted $\Mcc$-algebraic theory} $\mathcal{T}$ is a small category enriched in $\Mcc$ with finite products whose objects consist of finite sequences $\langle a_1,\ldots, a_n\rangle $ in $\Str{S}$ with $n\ge0$, and such that for every
$\str{a}=\langle a_1,\ldots, a_n\rangle$ and $\str{b}=\langle b_1,\ldots, b_k\rangle$ in $\Str{S}$ there is an isomorphism
$$
\str{a}\str{b}=\langle a_1,\ldots, a_n,b_1,\dots, b_k\rangle\cong \langle a_1,\ldots, a_n\rangle\times\langle b_1,\ldots, b_k\rangle=\str{a}\times \str{b}.
$$
A \emph{multi-sorted $\Mcc$-algebraic theory} is an $S$\nobreakdash-sorted $\Mcc$-algebraic theory for some set~$S$.

A morphism from an $S$-sorted $\Mcc$-algebraic theory $\mathcal{T}$ to an $S'$-sorted $\Mcc$-algebraic theory $\mathcal{T}'$ is a pair $(f, \varphi)$ where $\varphi\colon S\to S'$ is a function of sets and $f\colon \mathcal{T}\to\mathcal{T}'$ is a product-preserving $\Mcc$-functor such that $f(a)=\varphi(a)$ for every $a$ in $S$. We will denote by $\EAt{\Mcc}$ the category of multi\nobreakdash-sorted $\Mcc$-algebraic theories
(cf. Section~\ref{sect:ch_colour}).

Let $S$ be a fixed set of sorts. The category $\Mcc$-$\At_S$ has as objects the $S$-sorted $\Mcc$-algebraic theories and as morphisms the morphisms of algebraic theories that are the identity on $S$.

Let $\ECat{\Mcc}$ be the category of small $\Mcc$-enriched categories. There is a forgetful functor
\[
\lmorp{u}{\EAt{\Mcc}}{\ECat{\Mcc}}
\]
that sends each algebraic theory to its underlying category. By abuse of notation, we denote $u(\at{T})$ simply by $\at{T}$ when no confusion can arise.

Given a $\Mcc$-algebraic theory $\at{T}$, the category of $\at{T}$-algebras in $\Mcc$, denoted by $\EAlg{\Mcc}(\at{T})$ is the full subcategory of $\Mcc^{u(\at{T})}$ spanned by the product preserving $\Mcc$-functors.

\subsection{Morita equivalences of multi-sorted algebraic theories}

We now focus on the multi-sorted algebraic theories in $\sets$. The category $\EAt{\sets}$ will be simply denoted by $\At$, and for every $\at{T}$ in $\At$, its category of $\at{T}$-algebras in sets will be denoted by $\Alg(\at{T})$. The category $\Alg(\at{T})$ is a reflective subcategory of $\sets^{\at{T}}$. In other words, the inclusion functor $\Na\colon\Alg(\at{T}) \to \sets^{\at{T}}$ has a left adjoint
\[
\Sa: \sets^{\at{T}} \myrightleftarrows{\rule{0.4cm}{0cm}}\Alg(\at{T}): \Na.
\]

Given a morphism of algebraic theories $f\colon{\at{S}}\to{\at{T}}$, the extension-restriction adjunction associated to the underlying functor $u(f)$, restricts to an adjunction between the respective categories of algebras. The fact that the right adjoint $f^*$ sends product preserving functors in $\sets^{\at{T}}$ to product preserving functors in $\sets^{\at{S}}$ follows since $f$ is product preserving. The left adjoint $f_!$ is defined as the left Kan extension along $f$ as represented in the following diagram
$$
\xymatrix{
\at{S} \ar[r]^f \ar[d]_F & \at{T}\ar[dl]^{j_!F} \\
\sets. &
}
$$
Since $\sets$ is cartesian closed and both $\at{S}$ and $\at{T}$ have finite products, if a functor $F$ preserves finite products, so does the left Kan extension $j_!F$. This shows that $f_!$ also restricts to the categoris of algebras. Thus, we have the following commutative diagram of adjunctions:
\begin{equation}\label{eq:ext-rest}
\xymatrix{
    \sets^{\at{S}}
    \ar@<3pt>[r]^-{f_!}
    \ar@<-3pt>[d]_-{\Sa} &
    \sets^{\at{T}}
    \ar@<3pt>[l]^-{f^*}
    \ar@<-3pt>[d]_-{\Sa} \\
    \Alg(\at{S})
    \ar@<3pt>[r]^-{f_!}
    \ar@<-3pt>[u]_-{\Na}&
    \Alg(\at{T}).
    \ar@<3pt>[l]^-{f^*}
    \ar@<-3pt>[u]_-{\Na}
}
\end{equation}

We now define Morita equivalences in $\At$ and characterize them as the morphisms inducing an equivalence at the level of algebras.
Recall that a functor $f\colon \mathcal{C} \to \mathcal{D}$ between small categories is called a Morita equivalence if and only if it is fully faithful and essentially surjective up to retracts (that is, for every $y$ in $\mathcal{D}$ there exist $x$ in $\mathcal{C}$ such that $y$ is isomorphic to a retract of $f(x)$); see~\cite[Section 1.2]{CG19}. 

\begin{defi}
A map $f$ of algebraic theories is called a \emph{Morita equivalence} if the underlying functor $u(f)$ is a Morita equivalence of categories.
\end{defi}

Recall that given an algebraic theory $\at{T}$, the \emph{category of $\at{T}$-algebras} $\Alg({\at{T}})$ is the full subcategory of $\sets^{\at{T}}$ spanned by the product preserving functors. As in the case of Morita equivalences of categories, the Morita equivalences of algebraic theories can be characterized in terms of their categories of algebras.

\begin{theo}\label{theo:moritaAtSet}
Let $f\colon\at{S}\to \at{T}$ be a morphism between algebraic theories. The following are equivalent:
\begin{itemize}
    \item[{\rm (i)}] The morphism $f$ is a Morita equivalence.
    \item[{\rm (ii)}] The extension-restriction adjunction $f_!\colon \Alg(\mathcal{S})\rightleftarrows \Alg(\mathcal{T}):f^*$ is an equivalence of categories.
    \end{itemize}
\end{theo}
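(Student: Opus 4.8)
The plan is to pivot both implications through the classical Morita theorem for categories (Theorem~\ref{thm:moritaCat}), using the commutative square of adjunctions~\eqref{eq:ext-rest} together with the standard structure theory of categories of algebras. Two facts will be used throughout. First, by the discussion preceding the statement the presheaf-level $f_!$ preserves product-preserving functors, so the algebra-level $f_!$ is merely its restriction; in particular $\Na\,f_!\cong f_!\,\Na$, and, combined with the Yoneda lemma and $f^*=(-)\circ f$, the natural isomorphism
\[
f_!\bigl(\at{S}(\str a,-)\bigr)\;\cong\;\at{T}\bigl(f\str a,-\bigr),\qquad \str a\in\Str{S},
\]
holds both for the presheaf-level and the algebra-level $f_!$ (the representable $\at{S}(\str a,-)$ being product-preserving). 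Second, $\Na$ is fully faithful (hence reflects isomorphisms) with $\Sa\Na\cong\id$, and $\str b\mapsto\at{T}(\str b,-)$ is a fully faithful functor $\at{T}\op\to\Alg(\at{T})$ whose image is the class of finitely generated free $\at{T}$-algebras; each such algebra is a \emph{compact projective} object of $\Alg(\at{T})$ (equivalently, the corepresented functor $A\mapsto A(\str b)$ preserves sifted colimits, which are computed pointwise), and, conversely — the only genuinely non-formal input — every compact projective object of $\Alg(\at{T})$ is a retract of a finitely generated free algebra.

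For the implication (i)$\Rightarrow$(ii): if $f$ is a Morita equivalence, then Theorem~\ref{thm:moritaCat} gives that $f_!\colon\sets^{\at{S}}\rightleftarrows\sets^{\at{T}}\colon f^*$ is an equivalence; in particular the presheaf-level $f^*$ is fully faithful and the presheaf-level unit $\id\Rightarrow f^*f_!$ is an isomorphism. Full faithfulness of the algebra-level $f^*$ then follows from the commutativity $\Na\, f^*=f^*\,\Na$ of the square of right adjoints and the full faithfulness of $\Na$. For essential surjectivity of the algebra-level $f^*$, take $A\in\Alg(\at{S})$ and put $B:=f_!A$, which is an algebra since the presheaf-level $f_!$ preserves product-preserving functors; then, using the two commuting squares and the presheaf-level unit isomorphism, $\Na f^*B=f^*\Na B=f^*f_!\Na A\cong\Na A$, whence $f^*B\cong A$ because $\Na$ reflects isomorphisms. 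Thus the algebra-level $f^*$ is an equivalence; being a right adjoint, it has $f_!$ as quasi-inverse, so the adjunction $(f_!,f^*)$ is an equivalence of categories.

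For the implication (ii)$\Rightarrow$(i): assume the algebra-level adjunction $(f_!,f^*)$ is an equivalence; I show directly that $u(f)\colon\at{S}\to\at{T}$ is fully faithful and essentially surjective up to retracts, that is, a Morita equivalence. Full faithfulness: the equivalence $f_!\colon\Alg(\at{S})\to\Alg(\at{T})$ induces, via the displayed isomorphism and the two Yoneda embeddings, a bijection $\at{S}(\str a,\str a')\cong\at{T}(f\str a,f\str a')$ which one checks to be induced by $f$. Essential surjectivity up to retracts: fix $\str b\in\Str{S}$ and let $g$ be a quasi-inverse of $f_!$; since $\at{T}(\str b,-)$ is compact projective in $\Alg(\at{T})$ and this property is invariant under the equivalence $g$, the algebra $A:=g\bigl(\at{T}(\str b,-)\bigr)$ is compact projective in $\Alg(\at{S})$, hence a retract of some $\at{S}(\str a,-)$ by the structure fact recalled above. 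Applying $f_!$ and the displayed isomorphism, $\at{T}(\str b,-)\cong f_!A$ is a retract of $\at{T}(f\str a,-)$ in $\Alg(\at{T})$; and since $\str c\mapsto\at{T}(\str c,-)$ is fully faithful, the corresponding section and retraction are induced by morphisms of $\at{T}$ exhibiting $\str b$ as a retract of $f(\str a)$. Therefore every object of $\at{T}$ is a retract of an object in the image of $f$, so $f$ is in particular essentially surjective up to retracts, and hence a Morita equivalence.

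The only genuine difficulty is the direction (ii)$\Rightarrow$(i): it amounts to reconstructing $\at{T}$, up to Morita equivalence, from its category of algebras, and the content is concentrated in the classical fact that the retracts of finitely generated free algebras form an \emph{intrinsically defined} full subcategory of $\Alg(\at{T})$, namely the compact projective objects. Everything else is formal manipulation of the adjunction square~\eqref{eq:ext-rest}, of the Yoneda lemma, and of the reflection $\Sa\dashv\Na$; in particular the multi-sorted bookkeeping — carrying the sort function $\varphi\colon S\to S'$ alongside $f$ — is routine and plays no role.
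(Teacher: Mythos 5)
Your proof is correct and follows the same overall strategy as the paper: pivot (i)$\Rightarrow$(ii) through the classical Theorem~\ref{thm:moritaCat} and the square~\eqref{eq:ext-rest}, and for (ii)$\Rightarrow$(i) use the Yoneda embeddings $\at{T}\op\hookrightarrow\Alg(\at{T})$ to transport full faithfulness and then show that a preimage of a corepresentable under $f_!$ is itself a retract of a corepresentable. The difference is in how that last step is packaged. You invoke the standard structure fact that the compact projective objects of $\Alg(\at{T})$ are precisely the retracts of finitely generated free algebras, and that compact projectivity is invariant under the equivalence $f_!$; the paper instead proves the retraction by hand, writing the preimage $X$ as a colimit of corepresentables and extracting a splitting by chasing $\id_X$ through $\Alg(\at{S})(X,\colim_i \at{S}(\str a_i,-))\cong\colim_i\Alg(\at{S})(X,\at{S}(\str a_i,-))$. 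Your route is a little cleaner and also sidesteps a minor imprecision in the paper's version: the paper asserts that $\Alg(\at{S})(X,-)$ preserves \emph{all} colimits, which is too strong (evaluation at an object does not commute with arbitrary colimits of algebras); what is true, and what both arguments actually need, is preservation of \emph{sifted} colimits together with the fact that the canonical diagram of corepresentables over any algebra is sifted. Your (i)$\Rightarrow$(ii) is also the same argument, just with the details the paper leaves implicit spelled out (that $f^*$ is fully faithful and essentially surjective on algebras, hence an equivalence). One small slip: when you ``fix $\str b\in\Str{S}$'' at the start of the essential-surjectivity argument you of course mean $\str b$ an object of $\at{T}$, i.e.\ a word in the sort set of $\at{T}$; this is purely notational and does not affect the argument.
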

\begin{proof}
The fact that (i) implies (ii) follows from Theorem \ref{thm:moritaCat} applied to $u(f)$ and from diagram (\ref{eq:ext-rest}). If the top horizontal extension-restriction adjunction in (\ref{eq:ext-rest}) is an equivalence then the adjunction between algebras is also an equivalence. 

For the converse, consider the following commutative diagram
$$
\xymatrix{
  \at{S}^{\op}\ar[r]^{f^{\op}} \ar[d] & \at{T}^{\op} \ar[d] \\
\Alg(\at{S}) \ar[r]_{f_!} & \Alg(\at{T}),
}
$$ 
where the vertical arrows denote the Yoneda embeddings. Since $f_!$ and the Yoneda embeddings are fully faithful, so is $f^{\op}$. To prove that $f^{\op}$ is essentially surjective up to retracts consider an object $\str{a}$ in $\at{T}$. The functor $f_!$ is essentially surjective by assumption, so there exists $X$ in $\Alg(\at{S})$ such that $f_!X\cong\at{T}(\str{a},-)$. Now, we can check using the fully faithfulnes of $f_!$ that the functor $\Alg(\at{S})(X, -)$ preserves all colimits. Since every $\at{S}$-algebra is a colimit of corepresentables, if $X\cong\colim_i \at{S}(\str{a}_i,-)$, then  
$$
\Alg(\at{S})(X,\colim_i \at{S}(\str{a}_i,-))\cong \colim_i\Alg(\at{S})(X, \at{S}(\str{a}_i,-)),
$$
and taking the identity in $\Alg(\at{S})(X,X)$ we can show that $X$ is a retract of a corepresentable $\at{S}(\str{a}_j, -)$ for some $j$. Applying $f_!$ to this retraction and using Yoneda lemma we obtain that $\str{a}$ is the retract of $f(\str{a}_j)$, which is in the image of $f^{\op}$. We finish the proof by observing that $f$ is a Morita equivalence if and only if $f^{\op}$ is a Morita equivalence.
\end{proof}
 
\subsection{Simplicial algebraic theories and their algebras}

We now pass to the homotopical case and focus therefore on simplicial algebraic theories.
We will denote the category of algebraic theories enriched in $\ssets$ by $\sAt$.
Recall that for every simplicial algebraic theory $\at{T}$, its category of algebras in $\ssets$, denoted by $\sAlg{\at{T}}$, is the full subcategory of $\ssets^{\at{T}}$ spanned by the product preserving \emph{simplicial} functors. As in the non-enriched case, the inclusion $\morp{\Na}{\sAlg{\at{T}}}{\ssets^{\at{T}}}$ that sends a simplicial $\at{T}$-algebra $X$ to
$\sAlg{h_{(-)}, X}\cong X$, where $h$ denotes the corepresentable functor, has a left adjoint that we will denote by $\Sa$.

Let $\at{T}$ be an $S$-sorted simplicial algebraic theory. For every object $\str{a}$ in $\at{T}$ of the form $\langle a_1,\ldots, a_n\rangle$, let $\crep{\str{a}}$ be the corepresentable functor $\at{T}(\str{a},-)$ in $\ssets^{\at{T}}$, and let $p_{\str{a}}$ be the canonical map
\[
\lmorp{p_{\str{a}}}{\crep{a_1} \sqcup \dots \sqcup \crep{a_n}}{\crep{\str{a}}.}
\]
Observe that $\crep{\str{a}}$ is product preserving, that is, it is in the essential image of $\Na$.

We denote by $L_{\at{T}} =\{p_{\str{a}} \mid \str{a}\in \Str{S}\}$ the set of all such maps.
A simplicial functor $F$ in $\ssets^{\at{T}}$ is in the (essential) image of $\Na$ if and only if it is orthogonal to $L_{\at{T}}$, that is, if and only if the induced map
\[
\lmorp{(p_{\str{a}})_*}{\ssets^{\at{T}}(\crep{\str{a}}, F)\cong F(\str{a})}{ F(a_1)\times \dots \times F(a_n)\cong \ssets^{\at{T}}(\crep{a_1} \sqcup \dots \sqcup \crep{a_n}, F)}
\]
is an isomorphism for every $\str{a}$ in $\Str{S}$.

Given a morphism of simplicial algebraic theories $f\colon{\at{S}}\to{\at{T}}$, the extension-restriction adjunction associated to the simplicial functor $u(f)$, restricts to an adjunction between the corresponding categories of simplicial algebras, as represented by the following commutative diagram of adjunctions:

\[
\xymatrix{
    \ssets^{\at{S}}
    \ar@<3pt>[r]^-{f_!}
    \ar@<-3pt>[d]_-{\Sa} &
    \ssets^{\at{T}}
    \ar@<3pt>[l]^-{f^*}
    \ar@<-3pt>[d]_-{\Sa} \\
    \sAlg{\at{S}}
    \ar@<3pt>[r]^-{f_!}
    \ar@<-3pt>[u]_-{\Na}&
    \sAlg{\at{T}}.
    \ar@<3pt>[l]^-{f^*}
    \ar@<-3pt>[u]_-{\Na}
}
\]

\subsection{The homotopy theory of algebras over an algebraic theory}

We now recall some facts about the homotopy theory of algebras over a simplicial algebraic theory. From now on the category $\ssets$ will be always considered equipped with the Kan--Quillen model structure, that models the homotopy theory of spaces.

The goal for the upcoming sections is to prove Corollary \ref{cor:char_Morita_at}, providing a characterization of the morphisms of simplicial algebraic theories inducing a homotopical equivalence between the corresponding categories of algebras.

In \cite{BA02} (for one-sorted algebraic theories) and \cite{Ber06} (for multi-sorted algebraic theories) two models are presented for the homotopy theory of algebras over a (simplicial) algebraic theory $\mathcal{T}$: a model structure on the category of algebras (\emph{strict models}), and a model structure obtained as localization of the projective model structure on $\ssets^{\mathcal{T}}$, in which the fibrant objects are the \emph{weak models} for~$\mathcal{T}$. In these papers, Badzioch and Bergner proved the equivalence of these two approaches.

In order to prove Corollary \ref{cor:char_Morita_at}, we roughly proceed as follows. First we show that if $f\colon \mathcal{S} \to \mathcal{T}$ in Theorem~\ref{thm:DK} is a map of algebraic theories (not just of categories), then the equivalence restricts to an equivalence between the weak models of the two theories (Theorem~\ref{thm:char_Morita}). We then exploit the above mentioned equivalence between the weak models and the strict models to conclude that $f$ induces a Quillen equivalence between the corresponding categories of algebras.

Let $\at{T}$ be an $S$\nobreakdash-sorted simplicial algebraic theory. The projective model structure on $\ssets^{\at{T}}$ can be transferred to a model structure on $\sAlg{\at{T}}$ along $\Na$ to model the homotopy theory of simplicial algebras over $\at{T}$; see~\cite[Theorem~7.1]{Re02}. We will always consider $\sAlg{\at{T}}$ with this model structure.

\begin{theo}[Rezk]
Let $\at{T}$ be an $S$-sorted simplicial algebraic theory. Then the category of algebras $\sAlg{\at{T}}$ admits a right proper simplicial model structure in which a map $X\to Y$ is a weak equivalence (respectively a fibration) if and only if for every $\str{a}$ in $\Str{S}$, the map $X(\str{a})\to Y(\str{a})$ is a weak equivalence (respectively a fibration) of simplicial sets.
\end{theo}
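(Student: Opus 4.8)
The plan is to obtain this model structure as a right-induced (transferred) model structure along the adjunction $\Sa \dashv \Na$ between $\ssets^{\at{T}}$, equipped with the projective model structure, and $\sAlg{\at{T}}$, and then to identify the transferred fibrations and weak equivalences explicitly in terms of the evaluation functors $\mathrm{ev}_{\str{a}}$. First I would recall that the projective model structure on $\ssets^{\at{T}}$ exists because $\ssets$ is cofibrantly generated, and that its weak equivalences and fibrations are detected objectwise, i.e.\ by the functors $\mathrm{ev}_{\str{a}}\colon \ssets^{\at{T}}\to\ssets$ for $\str{a}\in\Str{S}$. I would then invoke the standard transfer principle (Kan's lemma on transfer of cofibrantly generated model structures, as used in \cite[Theorem~7.1]{Re02}): it suffices to check that $\sAlg{\at{T}}$ is bicomplete, that the generating (trivial) cofibrations of $\ssets^{\at{T}}$ have left adjoint images permitting the small object argument, and the acyclicity condition, namely that $\Na$ sends relative $\Sa(J)$-cell complexes to weak equivalences. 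Here $J$ is the set of generating trivial cofibrations of $\ssets^{\at{T}}$.

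The key observation that makes the acyclicity condition tractable — and simultaneously identifies the weak equivalences and fibrations — is that $\Na$ is a full inclusion that commutes with filtered colimits and with reflexive coequalizers. Concretely, a map $X\to Y$ in $\sAlg{\at{T}}$ is a transferred fibration (resp.\ weak equivalence) precisely when $\Na(X)\to\Na(Y)$ is a projective fibration (resp.\ weak equivalence) in $\ssets^{\at{T}}$, and since the latter are detected objectwise, this says exactly that $X(\str{a})\to Y(\str{a})$ is a fibration (resp.\ weak equivalence) of simplicial sets for every $\str{a}\in\Str{S}$. For the acyclicity condition one argues that $\Na$ preserves the colimits along which $\Sa(J)$-cells are built; one uses that $\sAlg{\at{T}}$ is a reflective subcategory closed under filtered colimits in $\ssets^{\at{T}}$ (because the orthogonality condition against $L_{\at{T}}$ involves only finite limits, which commute with filtered colimits), together with the fact that each map in $J$, being a product-of-inclusions $\Lambda^k[n]\times\crep{\str{a}}\hookrightarrow \Delta[n]\times\crep{\str{a}}$ or rather its image under $\Sa$, induces on underlying diagrams a levelwise trivial cofibration. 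Pushouts and transfinite compositions of such maps remain levelwise weak equivalences, and $\Na$ being a full inclusion, the same holds after applying $\Na$.

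Right properness follows from the fact that $\ssets$ is right proper and weak equivalences, fibrations and pullbacks in $\sAlg{\at{T}}$ are all computed objectwise (pullbacks exist in $\sAlg{\at{T}}$ and agree with those in $\ssets^{\at{T}}$ since $\Na$, being a right adjoint, preserves limits; hence they are computed objectwise in $\ssets$). The simplicial structure is inherited: $\sAlg{\at{T}}$ is tensored and cotensored over $\ssets$ with $X\otimes K$ computed via the reflection $\Sa$ of the objectwise tensor and $X^K$ computed objectwise (the objectwise cotensor of a product-preserving functor is again product-preserving since $\ssets$ is cartesian closed), and the pushout-product axiom (SM7) is then checked objectwise, reducing to the known simplicial model structure on $\ssets$.

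The main obstacle I expect is verifying the acyclicity condition of the transfer carefully, i.e.\ controlling the left adjoint $\Sa$ well enough to see that pushouts in $\sAlg{\at{T}}$ of the generating trivial cofibrations remain objectwise weak equivalences. This requires knowing that $\Sa$ does not distort the homotopy type — which is ultimately the statement that $\sAlg{\at{T}}\hookrightarrow\ssets^{\at{T}}$ is closed under the relevant colimits, so that the pushout of $\Sa(j)$ along any map of algebras is computed as the corresponding pushout in $\ssets^{\at{T}}$ followed by reflection, and that reflecting a levelwise trivial cofibration of \emph{product-preserving} functors along $\Sa$ changes nothing (it is already in the subcategory). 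This is exactly where one must be attentive that the objects involved are genuinely algebras, not arbitrary diagrams; but since the domains and codomains of the $\Sa(j)$ are built from corepresentables $\crep{\str{a}}$, which are already product-preserving, the argument goes through. Everything else is a routine application of Kan's transfer theorem and objectwise checking, so I would present the proof as: (1) cite \cite[Theorem~7.1]{Re02} for the transfer, (2) identify fibrations/weak equivalences objectwise via $\Na$ being a full inclusion detecting them, (3) deduce right properness and the simplicial structure by objectwise reduction to $\ssets$.
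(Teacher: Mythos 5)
The paper does not give a proof of this statement: it is Rezk's Theorem~7.1 in~\cite{Re02}, cited with attribution, so there is no internal argument to compare against. Your strategy — transfer along $\Sa\dashv\Na$ and an objectwise identification of weak equivalences and fibrations — is the right one and is indeed what Rezk does, but your verification of the acyclicity condition has a genuine gap.

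The problem is the sentence claiming that after forming the pushout of $\Sa(j)$ in $\ssets^{\at{T}}$, ``reflecting a levelwise trivial cofibration of product-preserving functors along $\Sa$ changes nothing (it is already in the subcategory).'' It is true that the domain and codomain of a generating (trivial) cofibration $j\colon K\times \crep{\str{a}}\to L\times\crep{\str{a}}$ are product-preserving, so $\Sa(j)\cong j$. But the object one actually needs to control is the \emph{pushout} $P=\Na(X)\sqcup_{K\times\crep{\str{a}}}(L\times\crep{\str{a}})$ computed in $\ssets^{\at{T}}$, and this is not product-preserving: a pushout of product-preserving functors along maps of product-preserving functors does not commute with finite products. (For the theory of monoids this is the familiar fact that a coproduct of monoids is a free product, computed by a reflection, not by a levelwise coproduct.) Consequently the pushout in $\sAlg{\at{T}}$ is $\Sa(P)$, which is genuinely different from $P$, and the map $\Na(X)\to\Na\Sa(P)$ one must show to be a weak equivalence differs from the projective trivial cofibration $\Na(X)\to P$ by the non-trivial reflection $P\to\Na\Sa(P)$. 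Nothing in your argument controls the effect of that reflection on homotopy type; closure of $\sAlg{\at{T})}$ under filtered colimits and reflexive coequalizers does not help, since pushouts are not sifted. This is precisely the hard part of the transfer: the usual proofs (including Rezk's, and the Schwede--Shipley/Berger--Moerdijk-style arguments for monadic categories) proceed by an explicit filtration of the free-extension pushout, identifying the successive layers as built from the generating trivial cofibrations of $\ssets$ in a way that makes the underlying map a transfinite composition of levelwise trivial cofibrations. Without such a filtration (or an alternative argument such as a functorial path object for algebras, as in Quillen's small object argument route), the acyclicity condition is not established. The rest of your outline — identification of fibrations/weak equivalences via $\Na$, right properness from objectwise pullbacks and fibrations, and the simplicial structure via reflected tensors and objectwise cotensors — is correct.
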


Given a map of simplicial algebraic theories $f\colon \at{S}\to \at{T}$, the extension-restriction adjunction
        \[
        f_!:\sAlg{\at{S}}\myrightleftarrows{\rule{0.4cm}{0cm}} \sAlg{\at{T}}: f^*
        \]
is a Quillen pair.

A model for the theory of homotopy algebras over $\at{T}$ can be obtained by left Bousfield localizing the projective model structure on $\ssets^{\at{T}}$ with respect to the set of maps $L_{\at{T}} =\{p_{\str{a}} \mid \str{a}\in \Str{S}\}$; see~\cite[Proposition 4.9]{Ber06} and \cite[Section 5]{BA02}. We will denote this localization by $\ssets^{\at{T}}_{\wm}$ and we will call its weak equivalences \emph{$L_\at{T}$\nobreakdash-local equivalences}.
\begin{defi}
    Let $\at{T}$ be an $S$\nobreakdash-sorted algebraic theory. A functor $X$ in $\ssets^{\at{T}}$ is
    called a \emph{weak model} or a \emph{homotopy $\at{T}$-algebra} if for every $\str{a}=\langle a_1,\dots,a_n\rangle$ in $\Str{S}$ the induced map of simplicial sets
    \[
    (p_{\str{a}})_*\colon X(\str{a})\longrightarrow X(a_1)\times \dots \times X(a_n)
    \]
    is a weak equivalence.
\end{defi}
The fibrant objects in $\ssets^{\at{T}}_{\wm}$ are precisely the functors that are weak models and
levelwise fibrant~\cite[Proposition 4.10]{Ber06}. Badzioch (for ordinary algebraic theories) and Bergner (in the multi-sorted case) proved that the model structures  $\sAlg{\at{T}}$ and $\ssets^{\at{T}}_{\wm}$ are Quillen equivalent; see~\cite[Theorem~6.4]{BA02} and \cite[Theorem~5.1]{Ber06}.

\begin{theo}
[Badzioch, Bergner]\label{thm:ber_bad}
The adjunction $\Sa:\ssets^{\at{T}}_{\wm}\rightleftarrows \sAlg{\at{T}}: \Na$ is a Quillen equivalence.\end{theo}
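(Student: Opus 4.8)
The plan is to check the two standard conditions guaranteeing that a Quillen pair $F\dashv G$ is a Quillen equivalence: that $G$ reflects weak equivalences between fibrant objects, and that the derived unit is a weak equivalence on every cofibrant object. First one must know that $(\Sa,\Na)$ is a Quillen pair at all. By construction of the transferred model structure (Rezk's theorem above), $\Na$ creates fibrations and weak equivalences, so it is right Quillen for the \emph{projective} model structure on $\ssets^{\at{T}}$. To promote this to a Quillen pair with the localized structure $\ssets^{\at{T}}_{\wm}$ in the source of $\Sa$, it suffices --- by the standard recognition principle for left Quillen functors out of a left Bousfield localization --- to check that $\Sa$ carries each localizing generator $p_{\str{a}}\colon\crep{a_1}\sqcup\cdots\sqcup\crep{a_n}\to\crep{\str{a}}$ to a weak equivalence between cofibrant objects. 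In fact $\Sa p_{\str{a}}$ is an \emph{isomorphism}: since $\Sa$ is cocontinuous, $\Sa\crep{a_i}$ is the free $\at{T}$-algebra on one generator of sort $a_i$, and both $\Sa(\crep{a_1}\sqcup\cdots\sqcup\crep{a_n})$ and $\Sa\crep{\str{a}}$ corepresent the functor $X\mapsto X(a_1)\times\cdots\times X(a_n)$ on $\sAlg{\at{T}}$, with $\Sa p_{\str{a}}$ inducing the identity between the corepresented functors.

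The first condition is then immediate. A fibrant object of $\sAlg{\at{T}}$ is a levelwise Kan strict algebra, and for a strict algebra every map $(p_{\str{a}})_*$ is an isomorphism, so such an algebra is a levelwise Kan weak model, i.e. a fibrant object of $\ssets^{\at{T}}_{\wm}$; thus $\Na$ sends fibrant objects to fibrant objects. Now between fibrant objects of $\ssets^{\at{T}}_{\wm}$ the $L_{\at{T}}$-local equivalences coincide with the projective, hence levelwise, weak equivalences, and $\Na$ reflects levelwise weak equivalences by the very definition of the model structure on $\sAlg{\at{T}}$.

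It remains to show that the derived unit $\eta_A\colon A\to\Na\bigl((\Sa A)^{\mathrm{fib}}\bigr)$ is an $L_{\at{T}}$-local equivalence for every cofibrant $A$ in $\ssets^{\at{T}}_{\wm}$. One first reduces to the case where $A$ is itself a weak model: replacing $A$ by a fibrant replacement $\widehat{A}$ in $\ssets^{\at{T}}_{\wm}$, which is again cofibrant, the map $\Sa A\to\Sa\widehat{A}$ is a weak equivalence of algebras because $\Sa$ is now left Quillen for the localized structure, so by naturality of $\eta$ the map $\eta_A$ is an $L_{\at{T}}$-local equivalence precisely when $\eta_{\widehat{A}}$ is. For a cofibrant weak model the statement is the rectification (``rigidification'') theorem of Badzioch and Bergner, and here is the plan. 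Let $\iota\colon S\hookrightarrow\at{T}$ be the inclusion of the one-letter words; the comonad $\iota_!\iota^*$ produces an augmented simplicial object $(\iota_!\iota^*)^{\bullet+1}A\to A$ which is a Reedy cofibrant resolution of $A$ in the projective structure, with each degree a coproduct of tensored corepresentables $\crep{a}\otimes K$ with $a\in S$ --- the ``free homotopy $\at{T}$-algebras''. Since $\Sa$ is cocontinuous and left Quillen, while $\Na$ commutes with geometric realizations of simplicial objects on the nose --- the diagonal of a bisimplicial set preserves finite products, so the realization of a simplicial object of product-preserving functors is again product-preserving --- the comparison $\eta_A$ is identified with the realization of the degreewise units, and it remains to see that each $\eta_{\crep{a}\otimes K}$ is an $L_{\at{T}}$-local equivalence. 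But $\crep{a}$ for $a\in S$ already lies in the essential image of $\Na$, so $\eta_{\crep{a}}$ is an isomorphism followed by a levelwise fibrant replacement, and the tensor factor $K$ is handled by writing $\crep{a}\otimes K$ as a homotopy colimit of copies of $\crep{a}\otimes\Delta^n\simeq\crep{a}$. Finally, the hypothesis that $A$ is a weak model is exactly what makes the augmentation $|(\iota_!\iota^*)^{\bullet+1}A|\to A$ --- and hence the whole comparison --- an $L_{\at{T}}$-local equivalence, rather than merely a map that becomes an equivalence after localization.

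The main obstacle is this last step: making rigorous that $\Sa$ and the right-derived functor of $\Na$ may be interchanged with the realization of the comonad resolution. The functor $\Sa$ preserves only cofibrations and (localized) trivial cofibrations, not arbitrary weak equivalences, and $\Na$ is not cocontinuous, so neither obviously commutes with a general homotopy colimit; what makes the argument go through is the combination of (a) the Reedy cofibrancy of the comonad resolution, so that its realization is a genuine homotopy colimit both before and after applying $\Sa$, (b) the compatibility of geometric realization with finite products, which lets $\Na$ pass through the realization, and (c) the weak model hypothesis on $A$. This is the content of \cite[\S 5--6]{BA02} in the one-sorted case and of \cite[\S 5]{Ber06} in the multi-sorted case, where the resolution is organized over the $S$-graded objects $\ssets^{S}$.
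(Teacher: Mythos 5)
The paper does not prove this theorem: it is stated as a cited result (Badzioch for one-sorted theories, Bergner for the multi-sorted case), referring to \cite[Theorem~6.4]{BA02} and \cite[Theorem~5.1]{Ber06}, with Remark~2.17 noting that the same argument carries over from $\sets$-enriched to simplicial algebraic theories. So there is no in-paper proof against which to compare.

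On the merits of your sketch: the easy parts are correct. Verifying the Quillen pair by showing $\Sa p_{\str{a}}$ is an isomorphism (both sides corepresent $X\mapsto X(a_1)\times\cdots\times X(a_n)$, and $p_{\str{a}}$ induces the comparison map, which is an isomorphism on strict algebras) is the right argument, and the observation that $\Na$ lands in fibrant weak models and reflects levelwise equivalences there is clean. You also correctly isolate the derived-unit condition as the substantive part and identify it with the Badzioch--Bergner rigidification theorem. However, the comonadic-resolution sketch that follows is genuinely a sketch, and the gaps you flag are real rather than routine: it is not immediate that each term $(\iota_!\iota^*)^{k+1}A$ of the resolution is itself a weak model, so the claim that the weak-model hypothesis on $A$ promotes the augmentation $|(\iota_!\iota^*)^{\bullet+1}A|\to A$ from a degreewise-split (hence after-$\iota^*$) equivalence to a levelwise equivalence over all of $\at{T}$ needs an argument; and the identification of the derived unit with the realization of the degreewise units has to be threaded through a fibrant replacement in $\sAlg{\at{T}}$, which is exactly where the preservation properties you list must be combined carefully. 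You are right that these details are the content of \cite[\S\,5--6]{BA02} and \cite[\S\,5]{Ber06}; as a citation-level justification, the proposal is adequate, but it is not a self-contained proof, and the final paragraph is where a full argument would require the most work.
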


\begin{rem}
The theorem above is, in fact, proved by Badzioch and Bergner for algebraic theories enriched in sets and not in simplicial sets. However, as mentioned in~\cite[Note 1.5]{BA02}, the same proof is valid for simplicial algebraic theories.
\end{rem}

For every map $f\colon \at{S}\to\at{T}$ of simplicial algebraic theories we have a commutative diagram of Quillen pairs
\begin{equation}\label{diag:adjunctions}
\xymatrix{
    \ssets^{\at{S}}
    \ar@<3pt>[r]^-{\id}
    \ar@<-3pt>[d]_-{f_!} &
    \ssets^{\at{S}}_{\wm}
    \ar@<3pt>[r]^-{\Sa}
    \ar@<-3pt>[d]_-{f_!}
    \ar@<3pt>[l]^-{\id} &
    \sAlg{\at{S}}
    \ar@<-3pt>[d]_-{f_!}
    \ar@<3pt>[l]^-{\Na}  \\
    \ssets^{\at{T}}
    \ar@<3pt>[r]^-{\id}
    \ar@<-3pt>[u]_-{f^*} &
    \ssets^{\at{T}}_{\wm}
    \ar@<3pt>[r]^-{\Sa}
    \ar@<3pt>[l]^-{\id}
     \ar@<-3pt>[u]_-{f^*} &
    \sAlg{\at{T}}
    \ar@<-3pt>[u]_-{f^*}
    \ar@<3pt>[l]^-{\Na}
}
\end{equation}

Recall that the path component functor $\pi_0\colon \ssets \to \sets$ extends to a functor $\pi_0\colon \sCat \to \Cat$ such that $\pi_0(\mathcal{C})(x, y) = \pi_0(\mathcal{C}(x,y))$ for every simplicial category $\mathcal{C}$ and every $x,y\in \mathcal{C}$.
   
\begin{defi}
Let $f\colon \at{S} \to \at{T}$ be a morphism of simplicial algebraic theories. We say that
\begin{itemize}
    \item[{\rm (i)}] $f$ is \emph{homotopically fully faithful} if $u(f)$ is homotopically fully faithful, that is, if $f\colon \at{S}(\mathbf{c}, \mathbf{c'}) \to \at{T}(f(\mathbf{c}), f(\mathbf{c'}))$ is a weak equivalence for every $\mathbf{c}, \mathbf{c'} \in \at{S}$;
    \item[{\rm (ii)}] $f$ is \emph{homotopically essentially surjective up to retracts} if the functor $\pi_0(u(f))$ is essentially surjective up to retracts;
    \item[{\rm (iii)}] $f$ is a \emph{Morita weak equivalence} if it is homotopically fully faithful and homotopically essentially surjective up to retracts.
   \end{itemize} 
\end{defi}

Our goal is to prove that the adjunction $(f_!,f^*)$ on the right in~(\ref{diag:adjunctions}) is a Quillen equivalence if and only if $f$ is a Morita weak equivalence of algebraic theories. Thanks to Theorem~\ref{thm:ber_bad}, we can equivalently prove that $f$ is a Morita weak equivalence if and only if the vertical adjunction in the middle of~(\ref{diag:adjunctions}) is a Quillen equivalence. We will show this in Corollary~\ref{cor:char_Morita_at}.

\subsection{Homotopy left Kan extensions and weak models}
Let $\cat{C}$ be a small simplicial category. Given a finite sequence of objects $\str{c}=\{c_i\}_{i=0}^n$ of $\cat{C}$, we will denote by $\nervpr{\cat{C}}{\str{c}}$ the simplicial set $\prod_{i=0}^{n-1} \cat{C}(c_i,c_{i+1})$.

Consider two simplicial functors $X$ and $Y$ in $\ssets^{\cat{C}}$ and $\ssets^{\cat{C}\op}$, respectively.
The \emph{(two sided) bar construction} of $X$ and $Y$ is the bisimplicial set $\bc{X}{\cat{C}}{Y}$ which is defined as
\[
\bcn{X}{\cat{C}}{Y}{n}_m= \underset{\str{c}=\{c_i\}_{i=0}^n\in \cat{C}^{n+1}}\coprod X(c_0)_m \times \nervpr{\cat{C}}{\str{c}}_m \times Y(c_n)_m
\]
for every $m,n\ge 0$. Thus, an element of $\bcn{X}{\cat{C}}{Y}{n}_m$ is represented by a tuple $(\str{c}\mid x; \{\alpha_i\}_{i=0}^{n-1}; y)$, where:
\begin{itemize}
    \item $\str{c}=(c_0,\dots,c_n)\in \cat{C}^{n+1}$;
    \item $x$ is an element of $X(c_0)_m$;
    \item $\{\alpha_i\}_{i=0}^{n-1}$ belongs to $\nervpr{\cat{C}}{\str{c}}_m$, that is, $\alpha_i\in \cat{C}(c_i, c_{i+1})_m$ for every $0\le i\le n-1$;
    \item $y$ is an element of $Y(c_n)_m$.
\end{itemize}
We will denote by the symbol $\dbc{X}{\cat{C}}{Y}$ the simplicial set obtained as the diagonal of the bisimplicial set $\bc{X}{\cat{C}}{Y}$.

Let $f\colon\mathcal{C}\to\mathcal{D}$ be a map of simplicial categories and consider the right $\mathcal{C}$\nobreakdash-module
\[
\gfun{\mathcal{D}(f(-),-)}{\mathcal{C}\op}{\ssets^{\mathcal{D}}}{c}{\mathcal{D}(f(c),-).}
\]
It is well known that for every left $\mathcal{C}$\nobreakdash-module $X$ in $\ssets^{\mathcal{C}}$, the left Kan extension of $X$ along $f$, denoted by $f_!(X)$, is isomorphic to the coend
\[
f_!(X)\cong\int^{c\in \mathcal{C}} X(c) \times \mathcal{D}(f(c),-).
\]

Let $\bcf{f}{X}$ in $(\ssets^{\Delta\op})^{\cat{D}}$ be the functor that assigns to each $d$ in $\mathcal{D}$ the bar construction $\bc{X}{\cat{C}}{\cat{D}(f(-),d)}$.
The homotopy left Kan extension of $X$ along $f$, denoted by $\tilde{f_!}(X)$, can be computed as the diagonal of $\bcf{f}{X}$. In other words,
\[
\begin{split}
&\tilde{f_!}(X)(d)_n\cong \bcf{f}{X}(d)_{n,n}\cong \dbcn{X}{\cat{C}}{\cat{D}(f(-),d)}{n}\cong \\
& \underset{c_0,\dots,c_n\in \cat{C}} \coprod X(c_0)_n\times \mathcal{C}(c_0,c_1)_n \times\dots \times \mathcal{C}(c_{n-1},c_n)_n \times \mathcal{D}(f(c_n),d)_n,
\end{split}
\]
for every $d$ in $\cat{D}$ and $n\ge 0$.
This assignment extends to a (simplicial) functor
$\morp{\tilde{f_!}}{\ssets^{\cat{C}}}{\ssets^{\cat{D}}}$.
The functor $\tilde{f_!}$ has a right adjoint $\tilde{f^*}$ defined as
\[
\tilde{f^*}(Y)(c) \cong \mathcal{\ssets^{\mathcal{D}}}(\tilde{f_!}(\mathcal{C}(c,-)), Y)
\]
for every $Y$ in $\ssets^{\mathcal{D}}$ and $c$ in $\mathcal{C}$; see \cite[\S 3]{DK87} for more details about the construction of the functors $\tilde{f_!}$ and $\tilde{f^*}$.
The following result can be found in~\cite[\S 3.5--\S 3.8]{DK87} (cf. \cite[Ch.IX.2]{GJ99})
\begin{propo}[Dwyer--Kan]\label{prop:deform}
Let $f\colon \mathcal{C}\to \mathcal{D}$ be a map of simplicial categories and let $\tilde{f_!}: \ssets^{\mathcal{C}}\rightleftarrows \ssets^{\mathcal{D}}:\tilde{f^*}$ be the adjunction given by homotopy
left Kan extension. The following holds:
\begin{itemize}
\item[{\rm (i)}] There exists a natural weak equivalence from $f^*$ to $\tilde{f^*}$.
\item[{\rm (ii)}] The adjunction $(\tilde{f_!},\tilde{f^*})$ is a Quillen pair between $\ssets^{\mathcal{C}}$ and $\ssets^{\mathcal{D}}$ with the projective model structures. Furthermore, both $\tilde{f_!}$ and $\tilde{f^*}$ preserve weak equivalences.
\item[{\rm (iii)}] The adjunction $(f_!, f^*)$ is a Quillen equivalence if and only if $(\tilde{f_!},\tilde{f^*})$ is a Quillen equivalence.
\end{itemize}
\end{propo}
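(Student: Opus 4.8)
The plan is to follow the argument of Dwyer and Kan~\cite[\S3.5--\S3.8]{DK87} (cf.~\cite[Ch.IX.2]{GJ99}): the whole statement is controlled by the behaviour of the homotopy left Kan extension on the corepresentable diagrams $\crep{c}=\mathcal{C}(c,-)$. First I would produce the natural transformation of~(i). Passing from the bar construction to the coend that computes the ordinary left Kan extension gives a natural augmentation $\tilde{f_!}(X)\to f_!(X)$. Evaluating at a corepresentable $X=\crep{c}$, one has $f_!(\crep{c})\cong\crep{f(c)}$, and the bar construction $\dbc{\crep{c}}{\mathcal{C}}{\mathcal{D}(f(-),d)}$ carries an extra degeneracy (insert $\id_c$), so the augmentation $\tilde{f_!}(\crep{c})(d)\to\crep{f(c)}(d)$ is a simplicial homotopy equivalence for every $d$ in $\mathcal{D}$, the contracting homotopy surviving the passage to the diagonal. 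Applying the simplicial functor $\ssets^{\mathcal{D}}(-,Y)$ and using the identifications $\ssets^{\mathcal{D}}(\crep{f(c)},Y)\cong Y(f(c))=f^*(Y)(c)$ and $\tilde{f^*}(Y)(c)\cong\ssets^{\mathcal{D}}(\tilde{f_!}(\crep{c}),Y)$, one obtains a natural simplicial homotopy equivalence $f^*\Rightarrow\tilde{f^*}$, hence in particular a natural weak equivalence, which is~(i).

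Next, for~(ii), I would first observe that $\tilde{f_!}(\crep{c})$ is projective cofibrant in $\ssets^{\mathcal{D}}$: it is the diagonal of a simplicial object which in each degree is a coproduct of copowers of corepresentables, that is, a Reedy-cofibrant simplicial object in the projective model structure, whose realization is therefore cofibrant. Since $\ssets^{\mathcal{D}}$ with the projective structure is a simplicial model category, $\ssets^{\mathcal{D}}(\tilde{f_!}(\crep{c}),-)$ carries fibrations to fibrations and acyclic fibrations to acyclic fibrations of simplicial sets; hence $\tilde{f^*}$ preserves fibrations and acyclic fibrations, so $(\tilde{f_!},\tilde{f^*})$ is a Quillen pair. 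For the preservation of weak equivalences, I would use the description of $\tilde{f_!}$ as the diagonal of a bar construction: if $X\to X'$ is an objectwise weak equivalence then, since only the factor $X(c_0)$ changes and finite products and arbitrary coproducts of weak equivalences of simplicial sets are weak equivalences, $\tilde{f_!}(X)\to\tilde{f_!}(X')$ is an objectwise weak equivalence in each bar degree, hence an objectwise weak equivalence on the diagonal. Finally, $\tilde{f^*}$ preserves weak equivalences by a two-out-of-three argument against~(i): $f^*$ is objectwise restriction, so it preserves all weak equivalences, and the natural weak equivalence $f^*\Rightarrow\tilde{f^*}$ then forces $\tilde{f^*}$ to do the same.

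Part~(iii) is then formal. Since both $f^*$ and $\tilde{f^*}$ preserve all weak equivalences, they descend to the homotopy categories with no fibrant replacement needed, and so compute their own total right derived functors; by~(i) these are naturally isomorphic, and passing to left adjoints gives $\mathbb{L}f_!\cong\mathbb{L}\tilde{f_!}$ as well. A Quillen pair is a Quillen equivalence exactly when its derived adjunction is an adjoint equivalence of homotopy categories, a condition which therefore holds for $(f_!,f^*)$ if and only if it holds for $(\tilde{f_!},\tilde{f^*})$.

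I expect the main obstacle to be the two standard simplicial inputs invoked above: that the extra degeneracy on the bar construction of a corepresentable yields a contracting homotopy which persists after taking the diagonal, and that the diagonal of a bisimplicial set takes degreewise weak equivalences to weak equivalences, together with the cofibrancy of $\tilde{f_!}(\crep{c})$ in the projective model structure. All of these are worked out in \cite[\S3]{DK87} (see also \cite[Ch.IX]{GJ99}), so in practice this proposition can simply be cited; the point of the sketch is only to record how the three statements follow from these inputs.
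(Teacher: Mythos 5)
The paper does not give a proof of this proposition; it is stated as a citation of Dwyer--Kan~\cite[\S 3.5--\S 3.8]{DK87} (cf.~\cite[Ch.~IX.2]{GJ99}), and your sketch faithfully reconstructs that argument, so the approach is the same one the paper points to. One small caution on part~(ii): being degreewise a coproduct of copowers of corepresentables gives degreewise projective cofibrancy of the bar construction, but Reedy cofibrancy is a stronger claim---one must also check that the latching maps are projective cofibrations, which for the bar construction follows from the fact that each degeneracy inserts an identity and hence splits off a summand; as you note, this detail (together with the extra-degeneracy contraction and the realization lemma for bisimplicial sets) is carried out in the references you cite.
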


\begin{rem}
We warn the reader that the notation we use for the homotopy left Kan extension and its right adjoint differs from the one used in \cite{DK87} and \cite{GJ99}. While Goerss--Jardine use $\tilde{f^*}$ and $\tilde{f_*}$ for what we denote by $\tilde{f_!}$ and $\tilde{f^*}$, Dwyer--Kan use $\underrightarrow{f_*}$ for the homotopy left Kan extension and $\underrightarrow{f^*}$ for its right adjoint.
\end{rem}

Let $\at{T}$ be a simplicial algebraic theory and suppose that $g\colon X\to Y$ is a weak equivalence in $\ssets^{\at{T}}$ with the projective model structure. Then $X$ is a weak model if and only if $Y$ is a weak model, since weak equivalences are closed under taking products.

\begin{propo}\label{prop:webetweenwm}
Let $\at{T}$ be a simplicial algebraic theory. A map $g\colon X\to Y$ between weak models in $\ssets^{\at{T}}$ is an $L_{\at{T}}$\nobreakdash-local equivalence if and only if it is a projective weak equivalence.
\end{propo}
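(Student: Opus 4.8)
The plan is to show the two directions separately, using the localization $\ssets^{\at{T}}_{\wm}$ of $\ssets^{\at{T}}$ at $L_{\at{T}}$. A projective weak equivalence is always an $L_{\at{T}}$\nobreakdash-local equivalence, since left Bousfield localization only enlarges the class of weak equivalences; this gives one implication with no work. So the content is the converse: given an $L_{\at{T}}$\nobreakdash-local equivalence $g\colon X\to Y$ between weak models, I must produce a projective (i.e.\ levelwise) weak equivalence.

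The key step is to use the characterization of fibrant objects in $\ssets^{\at{T}}_{\wm}$: these are precisely the functors that are weak models \emph{and} levelwise fibrant (cited from \cite[Proposition 4.10]{Ber06}). First I would take levelwise fibrant replacements $X \to X^{\mathrm{fib}}$ and $Y \to Y^{\mathrm{fib}}$ in the projective model structure; these are levelwise (hence $L_{\at{T}}$\nobreakdash-local) weak equivalences, and by the remark preceding the proposition $X^{\mathrm{fib}}$ and $Y^{\mathrm{fib}}$ are still weak models because weak equivalences are closed under finite products. Thus $X^{\mathrm{fib}}$ and $Y^{\mathrm{fib}}$ are fibrant in $\ssets^{\at{T}}_{\wm}$, and the induced map $g^{\mathrm{fib}}\colon X^{\mathrm{fib}} \to Y^{\mathrm{fib}}$ is an $L_{\at{T}}$\nobreakdash-local equivalence between fibrant objects of the localized model structure. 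A standard fact about left Bousfield localizations (Hirschhorn) is that an $L$\nobreakdash-local equivalence between $L$\nobreakdash-local (fibrant) objects is already a weak equivalence in the original model structure — here, a projective weak equivalence, i.e.\ a levelwise weak equivalence. Finally I would conclude by two-out-of-three: in the commutative square relating $g$, $g^{\mathrm{fib}}$, and the two fibrant-replacement maps, three of the four maps are projective weak equivalences, hence so is $g$.

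The main obstacle, such as it is, is purely bookkeeping: one must make sure the fibrant replacements are taken in the \emph{projective} structure on $\ssets^{\at{T}}$ (not in the localization), verify that they remain weak models so that they become fibrant after localizing, and invoke the correct general lemma on Bousfield localizations (an $L$-equivalence between $L$-fibrant objects is an underlying weak equivalence). There is no delicate homotopy-theoretic computation; the statement is essentially a formal consequence of the description of the fibrant objects of $\ssets^{\at{T}}_{\wm}$ together with the general theory of left Bousfield localization, so the proof should be short.
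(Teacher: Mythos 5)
Your proposal is correct and takes essentially the same approach as the paper: replace $X$ and $Y$ by projectively fibrant functors via projective weak equivalences, observe that these replacements remain weak models and are therefore fibrant in $\ssets^{\at{T}}_{\wm}$, invoke the standard Bousfield-localization fact that local equivalences between local fibrant objects are weak equivalences in the underlying structure, and finish by two-out-of-three.
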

\begin{proof}
 Using the factorization axioms of model categories in $\ssets^{\at{T}}$ with the projective model structure we can find a commutative diagram
    \[
    \xymatrix{X \ar[r]^-{g}
        \ar[d]_-{\sim} &
        Y
        \ar[d]^-{\sim} \\
        \widehat{X} \ar[r]_-{\widehat{g}} &
        \widehat{Y}
    }
    \]
such that $\widehat{X}$ and $\widehat{Y}$ are fibrant in $\ssets^{\at{T}}$. Since $X$ and $Y$ are weak models, so are $\widehat{X}$ and $\widehat{Y}$. Therefore $\widehat{X}$ and $\widehat{Y}$ are fibrant in $\ssets^{\at{T}}_{\wm}$. Since $\ssets^{\at{T}}_{\wm}$ is a left Bousfield localization of the projective model structure on $\ssets^{\at{T}}$, the map $\widehat{g}$ is an $L_{\at{T}}$\nobreakdash-local equivalence if and only if it is a projective weak equivalence. The vertical maps are projective weak equivalences, so $g$ is an $L_{\at{T}}$\nobreakdash-local equivalence if and only if it is a projective weak equivalence.
\end{proof}

\begin{propo}\label{prop:weaktoweak}
For every morphism $f$ of simplicial algebraic theories the functor $\tilde{f^*}$ sends weak models to weak models.
\end{propo}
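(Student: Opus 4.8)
The plan is to deduce the statement from the comparison between $\tilde{f^*}$ and the ordinary restriction functor $f^*$. I will first check that $f^*$ itself sends weak models to weak models, and then transport this property to $\tilde{f^*}$ along the natural weak equivalence $f^*\to\tilde{f^*}$ of Proposition~\ref{prop:deform}(i), using that the class of weak models is invariant under projective weak equivalences (as observed just before Proposition~\ref{prop:webetweenwm}).

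Fix a morphism $f\colon\at{S}\to\at{T}$ of simplicial algebraic theories, with $\at{S}$ an $S$-sorted theory and $\at{T}$ an $S'$-sorted theory, and write $\varphi\colon S\to S'$ for the underlying function of sorts, so that $f$ preserves finite products and $f(\langle a\rangle)=\langle\varphi(a)\rangle$ for each $a\in S$. Let $Y$ in $\ssets^{\at{T}}$ be a weak model. To see that $f^*(Y)=Y\circ u(f)$ is a weak model, fix $\str{a}=\langle a_1,\dots,a_n\rangle$ in $\Str{S}$, so that $f^*(Y)(\str{a})=Y(f(\str{a}))$ and $f^*(Y)(a_i)=Y(\varphi(a_i))$. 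Since $f$ preserves finite products, $f(\str{a})\cong\str{b}$ in $\at{T}$, where $\str{b}=\langle\varphi(a_1),\dots,\varphi(a_n)\rangle$, and $f$ carries the product projections $\str{a}\to\langle a_i\rangle$ to the product projections $\str{b}\to\langle\varphi(a_i)\rangle$. As $p_{\str{a}}$ is assembled out of these projections, under the identifications above the map $(p_{\str{a}})_*\colon f^*(Y)(\str{a})\to f^*(Y)(a_1)\times\cdots\times f^*(Y)(a_n)$ coincides with $(p_{\str{b}})_*\colon Y(\str{b})\to Y(\varphi(a_1))\times\cdots\times Y(\varphi(a_n))$, which is a weak equivalence because $Y$ is a weak model. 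Hence $f^*(Y)$ is a weak model.

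To finish, Proposition~\ref{prop:deform}(i) provides a natural weak equivalence $f^*\to\tilde{f^*}$, whose component at $Y$ is a projective weak equivalence $f^*(Y)\to\tilde{f^*}(Y)$ in $\ssets^{\at{S}}$; since one endpoint $f^*(Y)$ is a weak model and weak models are closed under projective weak equivalences, $\tilde{f^*}(Y)$ is a weak model, as required. The argument is essentially formal; the only point needing some care is the identification in the previous paragraph of the map $(p_{\str{a}})_*$ for $f^*(Y)$ with the map $(p_{\str{b}})_*$ for $Y$, which amounts to unwinding the definition of $p_{\str{a}}$ and using that a morphism of algebraic theories preserves finite products, hence sends product projections to product projections.
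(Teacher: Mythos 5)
Your proof is correct and follows exactly the same two-step argument as the paper: first observe that $f^*$ preserves weak models because $f$ is product preserving, then transfer this to $\tilde{f^*}$ via the natural weak equivalence $f^*\to\tilde{f^*}$ of Proposition~\ref{prop:deform}(i) together with the invariance of weak models under projective weak equivalences. You simply spell out the first step (the identification of $(p_{\str{a}})_*$ for $f^*(Y)$ with $(p_{\str{b}})_*$ for $Y$) in more detail than the paper's two-sentence proof.
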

\begin{proof}
    Since $f$ is product preserving, the functor $f^*$ preserves weak models. The statement then follows from Proposition \ref{prop:deform}(i).
\end{proof}

\begin{coro}
For every  morphism $f\colon \at{S}\to \at{T}$ of simplicial algebraic theories the adjunction
$\tilde{f_!}: \ssets^{\mathcal{S}}_{\wm}\rightleftarrows \ssets^{\mathcal{T}}_{\wm}: \tilde{f^*}$ is a Quillen pair.
\end{coro}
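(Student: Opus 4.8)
The plan is to deduce the statement directly from Propositions~\ref{prop:deform} and~\ref{prop:weaktoweak}, together with the standard criterion describing when a Quillen adjunction descends to left Bousfield localizations. By Proposition~\ref{prop:deform}(ii), $(\tilde{f_!},\tilde{f^*})$ is already a Quillen pair between $\ssets^{\at{S}}$ and $\ssets^{\at{T}}$ with the projective model structures; these model structures are left proper and combinatorial, being diagram categories over the Kan--Quillen model structure on $\ssets$, so the left Bousfield localizations $\ssets^{\at{S}}_{\wm}$ and $\ssets^{\at{T}}_{\wm}$ exist and the usual localization machinery applies. By that machinery, in order to conclude that $(\tilde{f_!},\tilde{f^*})$ remains a Quillen pair after localizing, it suffices to verify one of two equivalent conditions: that $\tilde{f_!}$ sends the localizing set $L_{\at{S}}$ into the $L_{\at{T}}$-local equivalences, or --- what is most convenient here --- that the right adjoint $\tilde{f^*}$ carries fibrant objects of $\ssets^{\at{T}}_{\wm}$ to fibrant objects of $\ssets^{\at{S}}_{\wm}$.

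I would check the latter condition. As recalled above (\cite[Proposition~4.10]{Ber06}), the fibrant objects of $\ssets^{\at{T}}_{\wm}$ are exactly the functors that are simultaneously weak models and levelwise fibrant, and the same description holds for $\ssets^{\at{S}}_{\wm}$. So let $Z$ be a levelwise fibrant weak model in $\ssets^{\at{T}}$. Proposition~\ref{prop:weaktoweak} tells us that $\tilde{f^*}(Z)$ is again a weak model, and Proposition~\ref{prop:deform}(ii), which states that $\tilde{f^*}$ is right Quillen for the projective model structures, tells us that $\tilde{f^*}(Z)$ is levelwise fibrant; hence $\tilde{f^*}(Z)$ is fibrant in $\ssets^{\at{S}}_{\wm}$, as required. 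This gives the corollary.

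Since the real content has already been established in Propositions~\ref{prop:deform} and~\ref{prop:weaktoweak}, I do not expect a genuine obstacle here; the only point needing care is the appeal to the localization criterion. Should one prefer to avoid invoking it, the first of the two equivalent conditions above can be verified by hand: for any levelwise fibrant weak model $Z$ in $\ssets^{\at{T}}$ and any $\str{a}\in\Str{S}$, the map $\tilde{f_!}(p_{\str{a}})$ has projectively cofibrant domain and codomain (corepresentables, and finite coproducts of them, are projectively cofibrant), so the simplicial adjunction $(\tilde{f_!},\tilde{f^*})$ yields $\map(\tilde{f_!}(p_{\str{a}}),Z)\cong\map(p_{\str{a}},\tilde{f^*}(Z))$, and the latter induced map is a weak equivalence because $\tilde{f^*}(Z)$ --- being a levelwise fibrant weak model --- is $L_{\at{S}}$-local; thus $\tilde{f_!}$ sends each $p_{\str{a}}$, and hence (using that $\tilde{f_!}$ preserves projective weak equivalences, again Proposition~\ref{prop:deform}(ii)) every $L_{\at{S}}$-local equivalence between cofibrant objects, to an $L_{\at{T}}$-local equivalence.
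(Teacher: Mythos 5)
Your proof is correct, and it reaches the same conclusion via a genuinely different (though closely related) route. The paper proves the corollary by checking the Dugger-type criterion for Quillen pairs directly: $\tilde{f_!}$ preserves cofibrations (automatic, since Bousfield localization does not change cofibrations) and $\tilde{f^*}$ sends $L_{\at{T}}$-local equivalences between fibrant objects to $L_{\at{S}}$-local equivalences. To verify the latter the paper first converts the local equivalence into a projective weak equivalence via Proposition~\ref{prop:webetweenwm}, then applies Propositions~\ref{prop:deform}(ii) and~\ref{prop:weaktoweak}, and then converts back via Proposition~\ref{prop:webetweenwm}. You instead invoke the standard criterion for a Quillen pair between left Bousfield localizations --- that the right adjoint of an already-Quillen pair descends as soon as it preserves local fibrant objects --- and check it directly: a fibrant object of $\ssets^{\at{T}}_{\wm}$ is a levelwise fibrant weak model, so $\tilde{f^*}$ sends it to a levelwise fibrant (Proposition~\ref{prop:deform}(ii)) weak model (Proposition~\ref{prop:weaktoweak}), which is fibrant in $\ssets^{\at{S}}_{\wm}$. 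This bypasses Proposition~\ref{prop:webetweenwm} entirely and is arguably the more transparent argument, at the cost of appealing to a localization-theoretic black box (which, as you note, can be replaced by the equally standard check that $\tilde{f_!}$ carries the localizing set $L_{\at{S}}$, whose maps have cofibrant source and target, to $L_{\at{T}}$-local equivalences via the simplicial adjunction). Both proofs rest on the same two substantive inputs, Propositions~\ref{prop:deform}(ii) and~\ref{prop:weaktoweak}; only the packaging of the model-categorical bookkeeping differs.
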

\begin{proof}
It is enough to prove that $\tilde{f_!}$ preserves cofibrations and $\tilde{f^*}$ sends weak equivalences between fibrant objects to weak equivalences. Since $\ssets^{\mathcal{S}}_{\wm}$ is a left Bousfield localization it has the same cofibration as $\ssets^{\mathcal{S}}$, so $\tilde{f_!}$ clearly preserves cofibrations.

Let $g$ be an $L_{\at{T}}$-local equivalence between fibrant objects. Since fibrant objects in
$\ssets^{\mathcal{T}}_{\wm}$ are in particular weak models, $g$ is a projective weak equivalence, by Proposition~\ref{prop:webetweenwm}. Hence $\tilde{f^*}(g)$ is a projective weak equivalence in $\ssets^{\mathcal{S}}$ by Proposition~\ref{prop:deform}(ii). But since $\tilde{f^*}$ preserves weak models, by Proposition~\ref{prop:weaktoweak}, $\tilde{f^*}(g)$ is an $L_{\at{S}}$-local equivalence, again by Proposition~\ref{prop:webetweenwm}.
\end{proof}

\begin{rem}\label{rem:adj_tilde_f}
Since by Proposition~\ref{prop:deform} the functor $f^*$ is weakly equivalent to $\tilde{f}^*$, it follows that for every morphism $f\colon\mathcal{S}\to \mathcal{T}$ of simplicial algebraic theories the adjunction $\tilde{f_!}: \ssets^{\mathcal{S}}_{\wm}\rightleftarrows \ssets^{\mathcal{T}}_{\wm}: \tilde{f^*}$ is a Quillen equivalence if and only if ${f_!}: \ssets^{\mathcal{S}}_{\wm}\rightleftarrows \ssets^{\mathcal{T}}_{\wm}:{f^*}$ is a Quillen equivalence.
\end{rem}

\begin{propo}\label{prop:weakmod}
The functor $\tilde{f_!}$ sends weak models to weak models for every map $f$ of simplicial algebraic theories.

\end{propo}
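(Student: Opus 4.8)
The plan is to reduce the claim to a statement about the explicit formula for the homotopy left Kan extension as a bar construction, and to exploit that $\tilde{f_!}$ preserves weak equivalences (Proposition~\ref{prop:deform}(ii)) together with the fact that, for a product-preserving $f$, the underlying category map sends the canonical cone $\crep{a_1}\sqcup\dots\sqcup\crep{a_n}\to\crep{\str{a}}$ in $\at{S}$ to the corresponding one in $\at{T}$. Concretely, fix an $S$-sorted simplicial algebraic theory $\at{S}$, a target $\at{T}$, a weak model $X$ in $\ssets^{\at{S}}$, and an object $\str{b}=\langle b_1,\dots,b_k\rangle$ of $\at{T}$. I must show that the map
\[
(p_{\str{b}})_*\colon \tilde{f_!}(X)(\str{b}) \longrightarrow \tilde{f_!}(X)(b_1)\times\dots\times\tilde{f_!}(X)(b_k)
\]
is a weak equivalence of simplicial sets.

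The key step is to identify $\tilde{f_!}(X)(\str{b})$ up to weak equivalence with a bar construction in which the ``$\str{b}$-slot'' can be analyzed. By definition $\tilde{f_!}(X)(\str{b})=\dbc{X}{\at{S}}{\at{T}(f(-),\str{b})}$, the diagonal of the simplicial object $\bc{X}{\at{S}}{\at{T}(f(-),\str{b})}$. Since $f$ is product preserving, $\at{T}(f(c),\str{b})\cong \at{T}(f(c),b_1)\times\dots\times\at{T}(f(c),b_k)$ naturally in $c\in\at{S}\op$, i.e. the right $\at{S}$-module $\at{T}(f(-),\str{b})$ is the product of the modules $\at{T}(f(-),b_j)$. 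I would therefore like to commute the bar construction past this finite product. This is not literally true on the nose for $\bc{X}{\at{S}}{-}$ in each simplicial degree, but it is true after geometric realization/up to weak equivalence: the bar construction $\dbc{X}{\at{S}}{-}$, being a homotopy colimit, preserves finite products up to weak equivalence precisely when its ``left half'' $\dbc{X}{\at{S}}{\at{S}(-,?)}$ behaves well — and here one uses that $X$ is a weak model, so that $\dbc{X}{\at{S}}{\at{S}(c,-)}\simeq X(c)$ and more generally the relevant left-hand homotopy colimits compute products of $X$-values up to weak equivalence. Running this through, one obtains a zig-zag of natural weak equivalences
\[
\tilde{f_!}(X)(\str{b}) \;\simeq\; \tilde{f_!}(X)(b_1)\times\dots\times\tilde{f_!}(X)(b_k),
\]
and a diagram chase (naturality of the bar construction in the right module, applied to $p_{\str{b}}\colon \crep{b_1}\sqcup\dots\sqcup\crep{b_k}\to\crep{\str{b}}$, whose image under $f$ is $p_{f(\str{b})}$ split up via the product decomposition of $\at{T}$) shows this zig-zag is compatible with $(p_{\str{b}})_*$. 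Hence $(p_{\str{b}})_*$ is a weak equivalence, which is what we wanted.

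The main obstacle is the step where I want to pull a finite product out of the bar construction up to weak equivalence: in each bisimplicial degree the coproduct defining $\bcn{X}{\at{S}}{\at{T}(f(-),\str{b})}{n}_m$ does not commute with the product over the $b_j$'s, so the identification is only homotopical and requires care. I expect the clean way to handle this is to first replace $X$ by a levelwise-fibrant weak model via a projective weak equivalence (legitimate by Proposition~\ref{prop:deform}(ii), which says $\tilde{f_!}$ preserves weak equivalences, and by the observation recorded just before Proposition~\ref{prop:webetweenwm} that weak equivalences preserve the weak-model property), and then argue degreewise: for each fixed $n$, the simplicial set $\bcn{X}{\at{S}}{\at{T}(f(-),\str{b})}{n}$ is (the nerve data of) a coproduct whose $\str{b}$-dependence is through $\at{T}(f(c_n),b_1)\times\dots\times\at{T}(f(c_n),b_k)$, and one checks — using that $X(c_0)$, the mapping spaces of $\at{S}$, and the $\at{T}(f(c_n),b_j)$ are all Kan and that $X$ is a weak model — that the map induced by $p_{\str{b}}$ on each level, and hence on the diagonal, is a weak equivalence. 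Equivalently, one may phrase the whole argument as: $\tilde{f_!}$ is the composite of the derived left Kan extension along $f$ followed by restriction, realize the relevant homotopy colimit as a homotopy colimit over the comma category, and invoke cofinality/homotopy-invariance; the product decomposition of $\at{T}(f(-),\str{b})$ then yields the product decomposition of the homotopy colimit. Either route reduces the Proposition to bookkeeping with Kan simplicial sets plus the defining property of a weak model, with no genuinely new input beyond Proposition~\ref{prop:deform}.
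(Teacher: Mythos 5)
The proposal correctly identifies what has to be proved and even correctly localizes the difficulty: one must show that the canonical map $p\colon\tilde{f_!}(X)(\str{a}\str{b})\to\tilde{f_!}(X)(\str{a})\times\tilde{f_!}(X)(\str{b})$, realized at the level of bar constructions as a map $\delta\colon\bcf{f}{X}(\str{a}\str{b})\to\bcf{f}{X}(\str{a})\times\bcf{f}{X}(\str{b})$, is a weak equivalence, and you rightly observe that in each simplicial degree the source is a coproduct of products while the target is a product of coproducts, so there is no degreewise isomorphism. But the resolution you then offer does not close this gap. Replacing $X$ by a levelwise fibrant weak model does nothing here: the degreewise map from a coproduct of products to a product of coproducts is simply not a weak equivalence, Kan or not (already $\pi_0$ of a single bar level breaks, since $\pi_0$ takes the coproduct over sequences $c_0,\dots,c_n$ to a set that is strictly smaller than the corresponding product of coproducts). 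Likewise, the suggested appeal to ``cofinality/homotopy-invariance'' to pull a finite product out of a homotopy colimit is not available: homotopy colimits in $\ssets$ do not commute with finite products, and the weak-model hypothesis on $X$ has to enter in an essential, not a formal, way.

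What the paper actually does is introduce a third bisimplicial object, $\bcf{(f\times f)}{X}(\str{a},\str{b})=\bc{X(-\times-)}{\at{S}\times\at{S}}{\at{T}(f(-),\str{a})\times\at{T}(f(-),\str{b})}$, sitting over both $\bcf{f}{X}(\str{a}\str{b})$ and $\bcf{f}{X}(\str{a})\times\bcf{f}{X}(\str{b})$ via maps $\psi$ and $\phi$ with $\delta\psi=\phi$. The map $\phi$ is levelwise a weak equivalence because in each bisimplicial degree it is a coproduct (over pairs of sequences) of the maps $X(\str{u}_0\str{v}_0)\to X(\str{u}_0)\times X(\str{v}_0)$, and \emph{this} is where the weak-model hypothesis on $X$ is used — not in the naive way your sketch suggests. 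The map $\psi$ (which contracts the ``doubled'' nerve direction back to a single one using the diagonal of $\at{S}$ and the projections of $\at{T}$) is shown to be a levelwise simplicial homotopy equivalence by an explicit construction of a homotopy inverse $\sigma$ and two simplicial homotopies $J$ and $K$. This auxiliary bar construction over $\at{S}\times\at{S}$ and the two explicit homotopies are the genuine content of the proposition, and they are entirely absent from your proposal; ``one checks'' in your write-up is precisely the step that cannot be dispatched by fibrant replacement, cofinality, or Proposition~\ref{prop:deform} alone.
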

\begin{proof}
Let $f\colon \at{S}\to \at{T}$ be a map of simplicial algebraic theories. It suffices to prove that if $X$ is a weak model for $\at{S}$, then the canonical map
$$
p\colon {\tilde{f_!}(X)(\str{a}\str{b})}\longrightarrow{\tilde{f_!}X(\str{a})\times \tilde{f_!}X(\str{b})}
$$
is a weak equivalence for every $\str{a}, \str{b}$ in $\at{T}$.

We can consider two bisimplicial objects associated to $X$, namely:
\[
\bcf{f}{X}(\str{a}\str{b}) =\bc{X}{\at{S}}{\at{T}(f(-),\str{a}\str{b})}
\]
and
\[
\bcf{f}{X}(\str{a})\times \bcf{f}{X}(\str{b})= \bc{X}{\at{S}}{\at{T}(f(-),\str{a})}\times \bc{X}{\at{S}}{\at{T}(f(-),\str{b})}.
\]
We are also going to consider the following auxiliary bisimplicial object:
\[
\bcf{(f\times f)}{X}(\str{a},\str{b}) = \bc{X(-\times -)}{\at{S}\times \at{S}}{\at{T}(f(-),\str{a}) \times \at{T}(f(-),\str{b})}.
\]
Given two finite sequences of the same length $\str{u}=\{\str{u}_i\}_{i=0}^n$ and $\str{v}=\{\str{v}_i\}_{i=0}^n$ of objects in $\at{S}$ we will denote by $(\str{u},\str{v})$ the sequence $\{(\str{u}_i,\str{v}_i)\}_{i=0}^n$ of objects of $\at{S}\times \at{S}$ and by $\str{u}\str{v}$ the sequence $\{\str{u}_i\str{v}_i\}_{i=0}^n$ in $\at{S}$.
There are natural morphisms
\[
\xymatrix{
    & \ar[ld]_-{\psi} \bcf{(f\times f)}{X
    }(\str{a},\str{b}) \ar[rd]^-{\phi} \\
    \bcf{f}{X
    }(\str{a}\str{b}) \ar[rr]_-{\delta} & & \bcf{f}{X
}(\str{a}) \times \bcf{f}{X
}(\str{b}).}
\]
Since the diagonal of $\delta$ is isomorphic to $p$, in order to prove our statement it is sufficient to prove that $\delta$ is a levelwise weak equivalence, by the realization lemma for bisimplicial sets.

The morphism $\phi$ is a levelwise weak equivalence, since $X$ is a weak model. By the two out of three property it is then enough to show that $\psi$ is a levelwise weak equivalence.

For every $n,m \in \mathbb{N}$ the set $((\overline{{f\times f})(X)}(\str{a},\str{b}))_{m,n}$ is isomorphic to
\[
\underset{\str{v}_0,\dots,\str{v}_n\in \at{S}} {\underset{\str{u}_0,\dots,\str{u}_n\in \at{S}} \coprod}\hspace{-0.3cm}
X(\str{u}_0 \str{v}_0)_m \times \nervpr{(\at{S}\times \at{S})}{\str{u},\str{v}}_m
\times \at{T}(f(\str{u}_n),\str{a})_m\times \at{T}(f(\str{v}_n),\str{b})_m,
\]
and the set $(\bcf{f}{X}(\str{a}\str{b}))_{m,n}$ is isomorphic to
\[
{\underset{\str{u}_0,\dots,\str{u}_n\in \at{S}} \coprod}
X(\str{u}_0)_m \times \nervpr{\at{S}}{\str{u}}_m
\times \at{T}(f(\str{u}_n),\str{a})_m\times \at{T}(f(\str{u}_n),\str{b})_m.
\]
The morphism $\psi_{m,n}$ sends $(\str{u},\str{v}\mid x; \{\alpha_i,\beta_i\}_{i=0}^{n-1} ; g,h)$ in $((\overline{{f\times f})(X)}(\str{a},\str{b}))_{m,n}$ to
$(\str{u}\str{v}\mid x; \{\alpha_i\times\beta_i\}_{i=0}^{n-1}  ; g\pi_1, h\pi_2)$.

There is also a morphism $\morp{\sigma}{\bcf{f}{X
    }(\str{a}\str{b})}{\bcf{(f\times f)}{X
}(\str{a},\str{b})}$ in the opposite directions, sending $(\str{u}\mid x; \{\gamma_i\}_{i=0}^{n-1} ; g,h)$ to $(\str{u},\str{u}\mid \Delta^*(x); \{\gamma_i, \gamma_i\}_{i=0}^{n-1}, ; g,h)$.

We are going to prove that $\sigma_{m\bullet}$ is a homotopy inverse for $\psi_{m\bullet}$ for every $m\in \mathbb{N}$.
We begin by exhibiting a simplicial homotopy $J$ from $\id$ to $(\sigma\psi)_{m\bullet}$.
Recall that a simplicial homotopy between two maps of simplicial sets $\morp{f,g}{X}{Y}$ can be defined by giving for every $n\in \N$ a set of functions $\{\morp{H^n_i}{X_n}{Y_{n+1}}\}_{i=0}^{n}$  satisfying certain relations (see, for example, \cite[\S 5]{May67} for details).
In particular, $\delta_0 H^n_0 = f_n$ and $\delta_{n+1} H^n_{n} = g_n$.

The evaluation of $(\sigma\psi)_{m\bullet}$ on an $n$\nobreakdash-simplex $(\str{u},\str{v}\mid x; \{\alpha_i,\beta_i\}_{i=0}^{n-1} ; g,h)$ is
\[
\begin{split}
(\sigma \psi)_{m,n}(\str{u},\str{v}\mid x; \{\alpha_i,\beta_i\}_{i=0}^{n-1} ; g,h) = (\str{u}\str{v},\str{u}\str{v}\mid x; \{\alpha_i\times \beta_i, \alpha_i\times \beta_i\}_{i=0}^{n-1}; g\pi_1, h\pi_2).
\end{split}
\]
For every sequence $\str{u}$ in $\at{S}^{n+1}$ and $0 \leq j\leq n$ we define three objects $\str{u}^{\star j}$, $\str{u}^{|j}$ and $\str{u}^{j|}$ in $\at{S}^{n+2}$ as follows:
\[
\str{u}^{\star j}_i=
\begin{cases}
\str{u}_i & i\leq j,\\
\str{u}_{i-1} & i> j,
\end{cases}
\qquad
\str{u}^{|j}_i=
\begin{cases}
\str{u}_i & i\leq j,\\
\ast & i> j,
\end{cases}
\qquad
\str{u}^{j|}_i=
\begin{cases}
\ast  & i\leq j,\\
\str{u}_{i-1} & i> j.
\end{cases}
\]
The homotopy $J$ is defined as follows: for every $n\in \N$, every $0\leq j \leq n$, and every $n$\nobreakdash-simplex $(\str{u},\str{v}\mid x; \{\alpha_i,\beta_i\}_{i=0}^{n-1} ; g,h)$ the $n+1$ simplex $J^n_j(\str{u},\str{v}\mid x; \{\alpha_i,\beta_i\}_{i=0}^{n-1} ; g,h)$ is equal to
\newcommand{\nartimes}{\hspace{-0.05cm}\times \hspace{-0.07cm}}
\[
\begin{split}
(\str{u}^{\star j}\str{v}^{|j},\str{u}^{|j}\str{v}^{\star j}\mid \Delta^*(x); (\alpha_0\nartimes \beta_0, \alpha_0\nartimes \beta_0),\dots, (\pi_1,\pi_2), \alpha_j\nartimes \beta_j, \dots,   \alpha_{n-1}\nartimes \beta_{n-1}; g, h).
\end{split}
\]
The collection $\{J^n_j\}_{n\in \N, 0\leq j \leq n}$ defines a simplicial homotopy from $(\sigma\psi)_{m\bullet}$ to the identity.

To conclude we have to exhibit a homotopy $K$ from $(\psi \sigma)_{m\bullet}$ to the identity.
For every $n$\nobreakdash-simplex $(\str{u}\mid x; \{\gamma_i\}_{i=0}^{n-1} ; g,h)$ we have that
\[
\begin{split}
(\psi\sigma)_{m,n}(\str{u}\mid x; \{\gamma_i\}_{i=0}^{n-1} ; g,h) = (\str{u}\str{u}\mid \Delta_*(x) ; \{\gamma_i\times \gamma_i\}_{i=0}^{n-1}; g\pi_1, h\pi_2).
\end{split}
\]
The homotopy $K$ is defined by the requirement that for every $n\in\N$, every $0\leq j\leq n$ and every $n$\nobreakdash-simplex $(\str{u}\mid x; \{\gamma_i\}_{i=0}^{n-1} ; g,h)$, the $n+1$\nobreakdash-simplex  $K^n_j(\str{u}\mid x; \{\gamma_i\}_{i=0}^{n-1} ; g,h)$ is equal to
\[
(\str{u}^{j|}\str{u}\mid x ; \gamma_0, \gamma_1, \dots, \Delta, \gamma_{j+1} \times \gamma_{j+1}, \dots,\gamma_{n-1} \times \gamma_{n-1}; g\pi_1, h\pi_2).
\]
It is easy to check that $K$ is indeed a well-defined homotopy.
\end{proof}
\begin{coro}
For every morphism $f\colon \at{S}\to \at{T}$ of simplicial algebraic theories the functor $\tilde{f_!}$ sends $L_\at{S}$\nobreakdash-local equivalences between weak models to $L_\at{T}$\nobreakdash-local equivalences.
\end{coro}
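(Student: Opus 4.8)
The plan is to assemble the statement from three results already established, with essentially no new work: Proposition~\ref{prop:webetweenwm} (a map between weak models is an $L$-local equivalence iff it is a projective weak equivalence), Proposition~\ref{prop:deform}(ii) (the functor $\tilde{f_!}$ preserves projective weak equivalences), and Proposition~\ref{prop:weakmod} ($\tilde{f_!}$ sends weak models to weak models). All the genuine content has been packed into these, so the argument here is a short chain of implications.

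Concretely, I would start with an $L_\at{S}$-local equivalence $g\colon X\to Y$ in $\ssets^{\at{S}}$ with $X$ and $Y$ weak models. First, since $X$ and $Y$ are weak models, Proposition~\ref{prop:webetweenwm} upgrades $g$ to a projective weak equivalence. Second, by Proposition~\ref{prop:deform}(ii) the homotopy left Kan extension $\tilde{f_!}$ preserves projective weak equivalences, so $\tilde{f_!}(g)\colon \tilde{f_!}(X)\to \tilde{f_!}(Y)$ is a projective weak equivalence in $\ssets^{\at{T}}$. Third, by Proposition~\ref{prop:weakmod} both $\tilde{f_!}(X)$ and $\tilde{f_!}(Y)$ are weak models for $\at{T}$. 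Finally, applying Proposition~\ref{prop:webetweenwm} in the other direction — a projective weak equivalence between weak models is an $L_\at{T}$-local equivalence — we conclude that $\tilde{f_!}(g)$ is an $L_\at{T}$-local equivalence, as desired.

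As for the main obstacle: there really is none left at this stage, which is the point. The substantive input is Proposition~\ref{prop:weakmod}, whose proof required the bar-construction homotopy analysis with the explicit simplicial homotopies $J$ and $K$; once that is in hand, together with the fact (Proposition~\ref{prop:deform}(ii)) that $\tilde{f_!}$ is homotopical, the corollary is immediate. The only thing worth double-checking is that Proposition~\ref{prop:webetweenwm} is stated for arbitrary maps between weak models (not, say, only between fibrant ones), so that it applies verbatim both to $g$ and to $\tilde{f_!}(g)$; this is indeed the case.
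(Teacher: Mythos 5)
Your argument is correct and is precisely the paper's own proof, just unpacked step by step: the paper cites the same three ingredients (Proposition~\ref{prop:webetweenwm}, Proposition~\ref{prop:deform}(ii), and Proposition~\ref{prop:weakmod}) in a one-line proof, and you have simply traced the chain of implications explicitly. Nothing to add.
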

\begin{proof}
 Since $\tilde{f_!}$ preserves projective weak equivalences by Proposition~\ref{prop:deform}(ii), the result follows from Proposition \ref{prop:webetweenwm} and Proposition \ref{prop:weakmod}.
\end{proof}

\subsection{Characterization of Morita weak equivalences of algebraic theories}

    We are finally ready to prove that a morphism of simplicial algebraic theories induces a Quillen equivalence between the respective homotopy theories of simplicial algebras if and only if it is a Morita weak equivalence.

\begin{lemma}\label{lemma:quequnit}
Let $\morp{f}{\at{S}}{\at{T}}$ be a morphism of simplicial algebraic theories. If the Quillen pair
$\tilde{f_!}: \ssets^{\at{S}}_{\wm}\rightleftarrows \ssets^{\at{T}}_{\wm}:\tilde{f^*}$ is a Quillen equivalence, then:
\begin{itemize}
\item[{\rm (i)}] For every weak model $X$ in $\ssets^{\at{S}}$ the unit $\morp{\eta_X}{X} {\tilde{f^*}\tilde{f_!}X}$ is an $L_\at{S}$\nobreakdash-local equivalence.
\item[{\rm (ii)}] For every weak model $Y$ in $\ssets^{\at{T}}$ the counit $\morp{\varepsilon_Y}{\tilde{f_!}\tilde{f^*}Y}{Y}$ is an $L_\at{T}$\nobreakdash-local equivalence.
\end{itemize}
\end{lemma}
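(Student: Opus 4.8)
The plan is to deduce both statements from the Quillen equivalence hypothesis by a two-out-of-three argument along a (co)fibrant replacement, exploiting that $\tilde{f_!}$ and $\tilde{f^*}$ preserve \emph{all} projective weak equivalences (Proposition~\ref{prop:deform}(ii)), so that the derived unit and counit of $(\tilde{f_!},\tilde{f^*})$ for the localized model structures $\ssets^{\at{S}}_{\wm},\ssets^{\at{T}}_{\wm}$ differ from the underived ones only by projective weak equivalences. Throughout I would use that $\ssets^{\at{S}}_{\wm}$ and $\ssets^{\at{T}}_{\wm}$ are left Bousfield localizations of the projective model structures: they have the same cofibrations, every projective weak equivalence is an $L$-local equivalence, every projectively cofibrant object is cofibrant in the localization, a projective cofibrant replacement is also a cofibrant replacement in the localization, and a projective fibrant replacement of a weak model is again a weak model (weak equivalences being stable under products), hence levelwise fibrant and a weak model, therefore fibrant in the localization.

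For (i), given a weak model $X$ in $\ssets^{\at{S}}$, I would pick a projective cofibrant replacement $q\colon\widehat X\to X$; then $\widehat X$ is still a weak model and is cofibrant in $\ssets^{\at{S}}_{\wm}$. In the naturality square of $\eta$ along $q$ the vertical maps $q$ and $\tilde{f^*}\tilde{f_!}(q)$ are projective weak equivalences by Proposition~\ref{prop:deform}(ii), hence $L_\at{S}$-local equivalences, so it is enough to handle $\eta_{\widehat X}$. Choosing a fibrant replacement $j\colon\tilde{f_!}\widehat X\to Z$ in $\ssets^{\at{T}}_{\wm}$, the derived unit $\widehat X\to\tilde{f^*}Z$, which factors as $\tilde{f^*}(j)\circ\eta_{\widehat X}$, is an $L_\at{S}$-local equivalence because $(\tilde{f_!},\tilde{f^*})$ is a Quillen equivalence and $\widehat X$ is cofibrant; moreover $\tilde{f_!}\widehat X$ is a weak model by Proposition~\ref{prop:weakmod} and $Z$ is a weak model (being fibrant in $\ssets^{\at{T}}_{\wm}$), so by Proposition~\ref{prop:webetweenwm} the $L_\at{T}$-local equivalence $j$ is already a projective weak equivalence; hence $\tilde{f^*}(j)$ is a projective weak equivalence (Proposition~\ref{prop:deform}(ii)), in particular an $L_\at{S}$-local equivalence, and two-out-of-three gives the result for $\eta_{\widehat X}$, and then for $\eta_X$.

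For (ii), dually, given a weak model $Y$ in $\ssets^{\at{T}}$, I would pick a projective fibrant replacement $r\colon Y\to\widehat Y$, which is fibrant in $\ssets^{\at{T}}_{\wm}$ by the remarks above. The naturality square of $\varepsilon$ along $r$ has both vertical maps projective weak equivalences (Proposition~\ref{prop:deform}(ii)), so it suffices to treat $\varepsilon_{\widehat Y}$. Taking a projective cofibrant replacement $c\colon W\to\tilde{f^*}\widehat Y$, which is also a cofibrant replacement in $\ssets^{\at{S}}_{\wm}$, the derived counit $\tilde{f_!}W\to\widehat Y$, which factors as $\varepsilon_{\widehat Y}\circ\tilde{f_!}(c)$, is an $L_\at{T}$-local equivalence by the Quillen equivalence hypothesis, while $\tilde{f_!}(c)$ is a projective weak equivalence (Proposition~\ref{prop:deform}(ii)), hence an $L_\at{T}$-local equivalence; two-out-of-three then finishes.

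The step I expect to demand the most care is not any computation but the consistent bookkeeping of the two model structures on each functor category: one must track which maps are merely projective weak equivalences versus genuinely $L$-local equivalences, and the argument works precisely because these two notions agree on the class of weak models (Proposition~\ref{prop:webetweenwm}), which is exactly the class preserved by $\tilde{f_!}$ and $\tilde{f^*}$ (Propositions~\ref{prop:weakmod} and~\ref{prop:weaktoweak}) and on which $\tilde{f_!}$ and $\tilde{f^*}$ behave homotopically like their derived versions.
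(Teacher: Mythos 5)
Your proposal is correct and follows essentially the same route as the paper: reduce via a projective cofibrant (resp.\ fibrant) replacement, invoke the derived unit (resp.\ counit) of the Quillen equivalence, and convert back and forth between projective weak equivalences and $L$-local equivalences on weak models using Propositions~\ref{prop:deform}, \ref{prop:webetweenwm}, and~\ref{prop:weakmod}. The only cosmetic difference is in the choice of fibrant replacement for part~(i): the paper takes a projective fibrant replacement of $\tilde{f_!}\widetilde{X}$ and notes it is still a weak model (hence $L_\at{T}$-fibrant), whereas you take an $L_\at{T}$-local fibrant replacement $j$ and then use Proposition~\ref{prop:webetweenwm} to see $j$ is projective; these are interchangeable.
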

\begin{proof}
 We only prove part (i); the proof of part (ii) is analogous. Let $X$ be a weak model in $\ssets^{\at{S}}$and let $\widetilde{X}$ be a projective cofibrant replacement of it. Let $\widehat{\tilde{f_!}\widetilde{X}}$ and $\widehat{\tilde{f_!}X}$  be projective fibrant replacements of $\tilde{f_!}\widetilde{X}$ and $\tilde{f_!}X$, respectively. Consider the commutative diagram in $\ssets^{\at{S}}$:
    \[
    \xymatrix{
        X \ar[r]^-{\eta_X}  &
        \tilde{f^*}\tilde{f_!} X \ar[r]^-{\sim} &
        \tilde{f^*}\widehat{\tilde{f_!} X} \\
        \widetilde{X} \ar[r]_-{\eta_{\widetilde{X}}} \ar[u]^-{\sim}&
        \tilde{f^*}\tilde{f_!} \widetilde{X} \ar[r]_-{\sim} \ar[u]_-{\sim}&
        \tilde{f^*}\widehat{\tilde{f_!} \tilde{X}, \ar[u]_-{\sim}}
    }
    \]
where all the arrows except $\eta_X$ and $\eta_{\widetilde{X}}$ are projective weak equivalences, by Proposition~\ref{prop:deform}(ii).
    If $(\tilde{f_!}, \tilde{f^*})$ is a Quillen equivalence, then the bottom horizontal composition is an $L_\at{S}$\nobreakdash-local equivalence, hence $\eta_{\widetilde{X}}$ is an $L_{\at{S}}$-local equivalence.  Note that $\widetilde{X}$ is a weak model since $X$ is so. By Proposition~\ref{prop:webetweenwm} it follows that $\eta_X$ and $\eta_{\widetilde{X}}$ are $L_\at{S}$\nobreakdash-local equivalences.
\end{proof}

Given a simplicial category $\mathcal{C}$ and an object $c$ of $\mathcal{C}$, we denote by $h_{c}$ the corepresentable functor $\mathcal{C}(c,-)\colon \mathcal{C}\to \ssets$. Recall from Section~\ref{sect:intro} that a morphism of simplicial categories is called a Morita weak equivalence if it is homotopically fully faithful and homotopically essentially surjective up to retracts.
\begin{propo}\label{prop:reqalg}
A morphism of simplicial categories $\morp{f}{\mathcal{C}}{\mathcal{D}}$ is a Morita weak equivalence if and only if the following two conditions hold:
\begin{itemize}
\item[{\rm(i)}]  For every $c$ in $\mathcal{C}$ the unit $\morp{\eta_c}{h_c}{\tilde{f^*}\tilde{f_!} h_c}$ is a projective weak equivalence in $\ssets^{\mathcal{C}}$.
\item[{\rm (ii)}] For every $d$ in $\mathcal{D}$ the counit $\morp{\varepsilon_d}{\tilde{f_!}\tilde{f^*}h_d}{h_d}$ is a projective weak equivalence in $\ssets^{\mathcal{D}}$.
\end{itemize}
\end{propo}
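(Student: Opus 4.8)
The plan is to derive the statement from Theorem~\ref{thm:DK} and Proposition~\ref{prop:deform}. To see that a Morita weak equivalence $f$ satisfies (i) and (ii): by Theorem~\ref{thm:DK} the adjunction $(f_!,f^*)$ is a Quillen equivalence for the projective model structures, hence so is $(\tilde{f_!},\tilde{f^*})$ by Proposition~\ref{prop:deform}(iii). By Proposition~\ref{prop:deform}(ii) both $\tilde{f_!}$ and $\tilde{f^*}$ preserve all weak equivalences, and a Quillen pair whose two functors are homotopical is a Quillen equivalence if and only if its unit and counit are objectwise weak equivalences; taking $\eta_{h_c}$ and $\varepsilon_{h_d}$ gives (i) and (ii).

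For the converse, assume (i) and (ii). First I would show that (i) implies $f$ is homotopically fully faithful. The corepresentable $h_c$ is projectively cofibrant, so the comparison $\tilde{f_!}(h_c)\to f_!(h_c)=h_{f(c)}$ from the bar construction to the coend is a projective weak equivalence, by the usual extra-degeneracy argument. Since $\tilde{f^*}$ is homotopical and naturally weakly equivalent to the restriction functor $f^*$ by Proposition~\ref{prop:deform}(i)--(ii), this gives $\tilde{f^*}\tilde{f_!}(h_c)\simeq\tilde{f^*}h_{f(c)}\simeq f^*h_{f(c)}$, whose value at an object $c'$ is $\mathcal{D}(f(c),f(c'))$. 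Tracing $\eta_c$ through these weak equivalences and comparing with the unit of the strict adjunction $(f_!,f^*)$ --- whose component at $h_c$ is, at each $c'$, exactly the map $\mathcal{C}(c,c')\to\mathcal{D}(f(c),f(c'))$ induced by $f$ --- one identifies the composite with that $f$-induced map. Hence (i), that is, that each $\eta_c$ is an objectwise weak equivalence, forces $f$ to be homotopically fully faithful.

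Next I would show that (ii) implies $f$ is homotopically essentially surjective up to retracts. Again $\tilde{f^*}h_d\simeq f^*h_d=\mathcal{D}(d,f(-))$, so $\tilde{f_!}\tilde{f^*}h_d\simeq\tilde{f_!}(f^*h_d)$, and, evaluated at $d$, this is the diagonal of the bar bisimplicial set with $(m,n)$-entry $\coprod_{c_0,\dots,c_n}\mathcal{D}(d,f(c_0))_m\times\overline{\mathcal{C}}(\{c_i\})_m\times\mathcal{D}(f(c_n),d)_m$, with the counit $\varepsilon_{h_d}$ at $d$ identified with its augmentation over $\mathcal{D}(d,d)$ given on $0$-simplices by composition in $\mathcal{D}$. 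By (ii) this augmentation is a weak equivalence, hence a $\pi_0$-isomorphism; writing $\pi_0$ of a diagonal bar construction as the coequalizer of $\pi_0$ on its $0$- and $1$-simplices, a preimage of $[\id_d]$ provides an object $c$ of $\mathcal{C}$ and classes $[g]\in\pi_0\mathcal{D}(d,f(c))$, $[h]\in\pi_0\mathcal{D}(f(c),d)$ with $[h]\circ[g]=[\id_d]$; that is, $d$ is a retract of $f(c)$ in $\pi_0\mathcal{D}$. With the previous paragraph, this makes $f$ a Morita weak equivalence.

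The step I expect to be the main obstacle is the identification used in the second paragraph (and its counterpart for the counit in the third): that after the zig-zag of weak equivalences from Proposition~\ref{prop:deform}, the unit $\eta_c$ becomes the map induced by $f$. This is a coherence assertion --- in essence, that the natural weak equivalence $f^*\Rightarrow\tilde{f^*}$ of Proposition~\ref{prop:deform}(i) and the bar-to-coend comparison are mates with respect to the two adjunctions --- and it is the point that needs genuine care rather than formal manipulation; the remainder is routine bookkeeping with bar constructions and $\pi_0$.
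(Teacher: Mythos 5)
Your proof follows essentially the same route as the paper's: the forward implication via Theorem~\ref{thm:DK} and Proposition~\ref{prop:deform}, and the converse by comparing $\tilde{f_!}h_c$ with $h_{f(c)}$ (the extra-degeneracy fact you invoke is precisely the content of Dwyer--Kan's Proposition 3.3, which the paper cites as a strong deformation retract), identifying the unit with the $f$-induced map to get homotopical fully faithfulness, and then lifting $[\id_d]$ through the bar augmentation on $\pi_0$ to produce the retract $d\to f(c)\to d$. The coherence point you flag as the main obstacle is indeed present but handled just as tersely in the paper (via a small commutative triangle after the deformation retract), so your proposal is sound and matches the paper's argument.
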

\begin{proof}
If $f\colon \mathcal{C}\to \mathcal{D}$ is a Morita weak equivalence, then $(\tilde{f_!},\tilde{f^*})$ is a Quillen equivalence and both $\tilde{f_!}$ and $\tilde{f^*}$ preserve weak equivalences, by Proposition~\ref{prop:deform}. It then follows that (i) and (ii) holds, not only for corepresentable functors but for all functors.

Conversely, suppose that (i) and (ii) hold. By~\cite[Proposition 3.3]{DK87}, for every $c$ in $\mathcal{C}$ the functor $\mathcal{D}(f(c),-)$ is a strong deformation retract of $\tilde{f}_!\mathcal{C}(c,-)$. This means that there exists maps $r\colon \tilde{f}_!\mathcal{C}(c,-)\to \mathcal{D}(f(c),-)$ and $s\colon \mathcal{D}(f(c),-)\to \tilde{f}_!\mathcal{C}(c,-)$ such that $r\circ s={\rm id}$ and $s\circ r\sim {\rm id}$. Then, the map
$$
i=s^*\circ\eta_c\colon \mathcal{C}(c, c')\longrightarrow \tilde{f}_!\mathcal{C}(c,-)(f(c'))
$$
is a weak equivalence for every $c'$ in $\mathcal{C}$. We have a commutative diagram
$$
\xymatrix{
\mathcal{C}(c, c') \ar[r]^-i\ar[dr]  & \tilde{f}_!\mathcal{C}(c,-)(f(c')) \ar[d]^r\\
 & \mathcal{D}(f(c), f(c')).
}
$$
Since $i$ and $r$ are weak equivalences, so is the map on the left. Hence, $f$ is homotopically fully faithful.

To show that $f$ is homotopically essentially surjective we use the same argument as in the proof of~\cite[Theorem 2.1]{DK87}.  Let $d$ be any object of $\mathcal{D}$ and consider the weak equivalence
$$
\tilde{f}_!f^*\mathcal{D}(d,-)(d)\longrightarrow \tilde{f}_!\tilde{f}^*\mathcal{D}(d,-)(d)\longrightarrow \mathcal{D}(d,d).
$$
Let $j\colon d\to f(c_0)$, $q\colon f(c_0)\to d$ be a $0$-simplex of $\tilde{f}_!f^*\mathcal{D}(d,-)(d)$ that is sent to the component of the identity by the previous map. Then $d$ is a retract of $f(c_0)$ in~$\pi_0(\mathcal{D})$.
\end{proof}
\begin{theo}\label{thm:char_Morita}
    Let $\morp{f}{\at{S}}{\at{T}}$ be a morphism of simplicial algebraic theories. Then the induced Quillen pair
    \[
    \xymatrix{
        {\tilde{f_!}\colon\ssets^{\at{S}}_{\wm}  \ar@<3pt>[r]} &
        \ssets^{\at{T}}_{\wm}\colon{\tilde{f^*}}	\ar@<3pt>[l]
    }
    \]
    is a Quillen equivalence if and only if $f$ is a Morita weak equivalence of algebraic theories.
\end{theo}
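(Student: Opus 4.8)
The plan is to reduce the statement to Proposition~\ref{prop:reqalg} via the Quillen equivalence between $\ssets^{\at{T}}_{\wm}$ and the projective model structure $\ssets^{\at{T}}$, so that being a Morita weak equivalence becomes a condition about the homotopy (co)units of $\tilde{f_!}$ on corepresentable functors. First I would note that, since $f$ is a map of algebraic theories, $f$ is in particular a map of the underlying simplicial categories, so Proposition~\ref{prop:reqalg} applies. The work is then entirely to match up ``$(\tilde{f_!},\tilde{f^*})$ is a Quillen equivalence on $\ssets^{(-)}_{\wm}$'' with conditions (i) and (ii) of Proposition~\ref{prop:reqalg}, which are about $\ssets^{(-)}$ with the \emph{projective} model structure.

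\textbf{From Morita weak equivalence to Quillen equivalence.} Assume $f$ is a Morita weak equivalence. By Proposition~\ref{prop:reqalg}, the unit $\eta_c\colon h_c\to\tilde{f^*}\tilde{f_!}h_c$ and the counit $\varepsilon_d\colon\tilde{f_!}\tilde{f^*}h_d\to h_d$ are projective weak equivalences for all corepresentables. To check that $(\tilde{f_!},\tilde{f^*})\colon\ssets^{\at{S}}_{\wm}\rightleftarrows\ssets^{\at{T}}_{\wm}$ is a Quillen equivalence it suffices to check: (a) for every cofibrant $X$ in $\ssets^{\at{S}}_{\wm}$, the composite $X\to\tilde{f^*}\tilde{f_!}X\to\tilde{f^*}\widehat{\tilde{f_!}X}$ (fibrant replacement in $\ssets^{\at{T}}_{\wm}$) is an $L_{\at{S}}$-local equivalence; and (b) $\tilde{f^*}$ reflects $L_{\at{T}}$-local equivalences between fibrant objects. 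For (b): fibrant objects of $\ssets^{\at{T}}_{\wm}$ are in particular weak models, so by Proposition~\ref{prop:webetweenwm} an $L_{\at{T}}$-local equivalence between them is a projective weak equivalence; $\tilde{f^*}$ preserves projective weak equivalences by Proposition~\ref{prop:deform}(ii), and its image lands in weak models by Proposition~\ref{prop:weaktoweak}, so another application of Proposition~\ref{prop:webetweenwm} upgrades the output to an $L_{\at{S}}$-local equivalence. For (a): since $\tilde{f_!}$ and $\tilde{f^*}$ preserve all projective weak equivalences, and since (by Proposition~\ref{prop:reqalg}, which gives the conclusion for all functors not just corepresentables) $\eta_X$ is a projective weak equivalence for \emph{every} $X$, the composite in (a) is already a projective weak equivalence, hence a fortiori an $L_{\at{S}}$-local equivalence. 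Combining with Remark~\ref{rem:adj_tilde_f} if one prefers to argue with $f_!$ instead of $\tilde{f_!}$, we conclude $(\tilde{f_!},\tilde{f^*})$ is a Quillen equivalence.

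\textbf{From Quillen equivalence to Morita weak equivalence.} Assume $(\tilde{f_!},\tilde{f^*})\colon\ssets^{\at{S}}_{\wm}\rightleftarrows\ssets^{\at{T}}_{\wm}$ is a Quillen equivalence. The corepresentable $h_{\str{a}}=\crep{\str{a}}$ attached to a sequence $\str{a}=\langle a_1,\dots,a_n\rangle$ in an algebraic theory is a weak model (indeed it is a strict model, orthogonal to $L_{\at{T}}$), so Lemma~\ref{lemma:quequnit} applies to it: the unit $\eta_{h_{\str{c}}}\colon h_{\str{c}}\to\tilde{f^*}\tilde{f_!}h_{\str{c}}$ is an $L_{\at{S}}$-local equivalence and the counit $\varepsilon_{h_{\str{d}}}\colon\tilde{f_!}\tilde{f^*}h_{\str{d}}\to h_{\str{d}}$ is an $L_{\at{T}}$-local equivalence. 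To feed these into Proposition~\ref{prop:reqalg} I must upgrade ``$L$-local equivalence'' to ``projective weak equivalence.'' For the unit: $h_{\str{c}}$ is a weak model, and $\tilde{f^*}\tilde{f_!}h_{\str{c}}$ is a weak model by Proposition~\ref{prop:weakmod} followed by Proposition~\ref{prop:weaktoweak}; hence by Proposition~\ref{prop:webetweenwm} an $L_{\at{S}}$-local equivalence between them is a projective weak equivalence. Similarly for the counit, using that $\tilde{f_!}\tilde{f^*}h_{\str{d}}$ and $h_{\str{d}}$ are weak models (Propositions~\ref{prop:weaktoweak} and~\ref{prop:weakmod}) and again Proposition~\ref{prop:webetweenwm}. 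Thus conditions (i) and (ii) of Proposition~\ref{prop:reqalg} hold with $\mathcal{C}=u(\at{S})$, $\mathcal{D}=u(\at{T})$, and therefore $u(f)$ — hence $f$ — is a Morita weak equivalence. The main obstacle is bookkeeping: one must carefully track that every object appearing as a source or target of a (co)unit is a weak model, so that Proposition~\ref{prop:webetweenwm} can repeatedly translate between $L$-local equivalences and projective weak equivalences — this is exactly the place where the hypotheses that $\at{S},\at{T}$ are \emph{algebraic theories} (not mere simplicial categories) and that $f$ is \emph{product-preserving} are used, via Propositions~\ref{prop:weaktoweak} and~\ref{prop:weakmod}.
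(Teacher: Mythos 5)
Your backward direction (from Quillen equivalence to Morita weak equivalence) is essentially the paper's proof: apply Lemma~\ref{lemma:quequnit} to the corepresentables (which are weak models), then use Propositions~\ref{prop:weaktoweak}, \ref{prop:weakmod} and \ref{prop:webetweenwm} to upgrade the resulting $L$-local equivalences to projective weak equivalences, and finish with Proposition~\ref{prop:reqalg}. That part is correct and I have no complaints.

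Your forward direction takes a genuinely different route from the paper --- you verify the criterion ``derived unit is a weak equivalence on cofibrant objects'' plus ``right adjoint reflects weak equivalences between fibrant objects,'' whereas the paper argues directly by factoring an arbitrary $\alpha\colon X\to\tilde{f^*}Y$ into an $L_{\at{S}}$-trivial cofibration followed by an $L_{\at{S}}$-fibration and passing across the projective-level Quillen equivalence. Your step (b) is fine (it is exactly the argument of the Corollary following Proposition~\ref{prop:weaktoweak}). However, your step (a) has a gap. You assert that the composite $X\to\tilde{f^*}\tilde{f_!}X\to\tilde{f^*}\widehat{\tilde{f_!}X}$ is already a projective weak equivalence because $\eta_X$ is one. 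But this requires the second map $\tilde{f^*}(\tilde{f_!}X\to\widehat{\tilde{f_!}X})$ to also be a projective weak equivalence, and this is not justified: a cofibrant $X$ in $\ssets^{\at{S}}_{\wm}$ need not be a weak model (the localized structure has the \emph{projective} cofibrations), hence $\tilde{f_!}X$ need not be a weak model, so the $L_{\at{T}}$-fibrant replacement $\tilde{f_!}X\to\widehat{\tilde{f_!}X}$ is in general only an $L_{\at{T}}$-local equivalence, not a projective one. Proposition~\ref{prop:deform}(ii) tells you $\tilde{f^*}$ preserves projective weak equivalences, not arbitrary $L_{\at{T}}$-local equivalences, so you cannot conclude the composite is a projective weak equivalence.

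The issue can be repaired, and the repair essentially leads you back to the paper's factorization argument: factor $X\to\tilde{f^*}\widehat{\tilde{f_!}X}$ as an $L_{\at{S}}$-trivial cofibration $\beta\colon X\to X'$ followed by an $L_{\at{S}}$-fibration $\gamma$. Then $X'$ is $L_{\at{S}}$-fibrant (hence a weak model) and projectively cofibrant, so $\tilde{f_!}X'$ is a weak model by Proposition~\ref{prop:weakmod}; the adjoint $\bar{\gamma}\colon\tilde{f_!}X'\to\widehat{\tilde{f_!}X}$ is a map between weak models, and the projective-level Quillen equivalence (from Theorem~\ref{thm:DK} plus Proposition~\ref{prop:deform}(iii)) together with Proposition~\ref{prop:webetweenwm} lets you transfer the weak-equivalence property across the adjunction. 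Without this detour through a weak-model intermediary, the comparison between the $L$-local and projective notions of weak equivalence on $\tilde{f_!}X$ is unjustified.
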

\begin{proof}
Suppose that the morphism of simplicial algebraic theories $f\colon \at{S}\to \at{T}$ is a Morita weak equivalence of algebraic theories, that is, the underlying map of simplicial categories is a Morita weak equivalence. We have to show that for $X$ cofibrant in $\ssets^{\at{S}}_{\wm}$ and
$Y$ fibrant in $\ssets^{\at{T}}_{\wm}$, a morphism $\morp{\alpha}{X}{\tilde{f^*}Y}$ is an $L_\at{S}$\nobreakdash-local equivalence if and only if the adjoint $\morp{\bar{\alpha}}{\tilde{f_!}X}{Y}$ is an $L_\at{T}$\nobreakdash-local equivalence.

First, we factor the map $\alpha$ into an $L_\at{S}$\nobreakdash-local trivial cofibration $\beta$ followed by a an $L_\at{S}$\nobreakdash-local fibration $\gamma$, and we consider the following commutative diagrams:
    \[
    \xymatrix{X \ar@{>->}[rd]_-{\beta}^{\sim} \ar[rr]^-{\alpha} & & \tilde{f^*}Y\\
        & X' \ar@{>>}[ru]_-{\gamma}&}
    \qquad
    \xymatrix{\tilde{f_!}X \ar[rd]_-{\tilde{f_!}\beta} \ar[rr]^-{\bar{\alpha}} & & Y\\
        & \tilde{f_!}X' \ar[ru]_-{\bar{\gamma}}.&
    }
    \]
 Thus, $\alpha$ is an $L_\at{S}$\nobreakdash-local equivalence if and only if $\gamma$ is a projective trivial fibration.
But  $\gamma$ is a projective weak equivalence if and only if $\bar{\gamma}$ is a projective weak equivalence.
    Since $\bar{\gamma}$ is a map between weak models, $\bar{\gamma}$ is a projective weak equivalence if and only if it is an $L_\at{T}$\nobreakdash-local equivalence by Proposition~\ref{prop:webetweenwm}. Finally, $\bar{\gamma}$ is a $L_\at{T}$\nobreakdash-local equivalence if and only if $\bar{\alpha}$ is.

    Conversely, suppose that $(\tilde{f_!},\tilde{f^*})$ is a Quillen equivalence. Then, by Lemma \ref{lemma:quequnit} the unit and the counit of $(\tilde{f_!},\tilde{f^*})$ are weak equivalences on weak models. It follows that $f$ is a Morita weak equivalence by Proposition \ref{prop:reqalg}, since corepresentable functors are weak models.
\end{proof}

\begin{coro}\label{cor:char_Morita_at}
A map $\morp{f}{\at{S}}{\at{T}}$ of simplicial algebraic theories is a Morita weak equivalence if and only if the induced Quillen pair
\[
    \xymatrix{
        f_!\colon\sAlg{\at{S}} \ar@<3pt>[r] &
        \sAlg{\at{T}}\colon f^*	\ar@<3pt>[l]
    }
    \]
is a Quillen equivalence.
\end{coro}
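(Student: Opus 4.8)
The plan is to deduce the statement formally from the three results already in place: Theorem~\ref{thm:char_Morita}, Remark~\ref{rem:adj_tilde_f}, and the Badzioch--Bergner Quillen equivalence of Theorem~\ref{thm:ber_bad}. First I would record the chain of equivalences at the level of weak model structures. By Theorem~\ref{thm:char_Morita}, $f$ is a Morita weak equivalence of algebraic theories precisely when the homotopy Kan extension pair $\tilde{f_!}\colon\ssets^{\at{S}}_{\wm}\rightleftarrows\ssets^{\at{T}}_{\wm}\colon\tilde{f^*}$ is a Quillen equivalence, and by Remark~\ref{rem:adj_tilde_f} this holds if and only if the ordinary extension--restriction pair $f_!\colon\ssets^{\at{S}}_{\wm}\rightleftarrows\ssets^{\at{T}}_{\wm}\colon f^*$ is a Quillen equivalence. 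So the only thing left to do is to transfer a Quillen equivalence between the weak model structures to a Quillen equivalence between the strict algebra model structures, and back.

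For this I would use the right-hand commutative square of Quillen pairs in diagram~(\ref{diag:adjunctions}),
\[
\xymatrix{
\ssets^{\at{S}}_{\wm} \ar[r]^-{\Sa} \ar[d]_-{f_!} & \sAlg{\at{S}} \ar[d]^-{f_!} \\
\ssets^{\at{T}}_{\wm} \ar[r]_-{\Sa} & \sAlg{\at{T}},
}
\]
in which the composite of left adjoints $\Sa\circ f_!$ is naturally isomorphic to $f_!\circ\Sa$. By Theorem~\ref{thm:ber_bad} both horizontal Quillen pairs $(\Sa,\Na)$ are Quillen equivalences. Splitting this square along its diagonal into two commutative triangles of Quillen pairs and applying the two-out-of-three property for Quillen equivalences, the left vertical pair $(f_!,f^*)\colon\ssets^{\at{S}}_{\wm}\rightleftarrows\ssets^{\at{T}}_{\wm}$ is a Quillen equivalence if and only if the right vertical pair $(f_!,f^*)\colon\sAlg{\at{S}}\rightleftarrows\sAlg{\at{T}}$ is. Combining this with the previous paragraph yields the corollary.

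I do not expect a substantive obstacle: the statement is a bookkeeping consequence of the three cited inputs. The one point that deserves a line of care is the variance in the two-out-of-three argument, i.e.\ checking that left adjoints compose consistently around the square (so that the diagonal Quillen pair is unambiguous), which is exactly the commutativity of adjunctions recorded in~(\ref{diag:adjunctions}); and, relatedly, noting that the horizontal pairs being Quillen equivalences is Theorem~\ref{thm:ber_bad} applied over both $\at{S}$ and $\at{T}$. Everything else is immediate.
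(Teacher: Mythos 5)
Your proposal is correct and takes essentially the same approach as the paper: the paper's proof of this corollary is a one-line citation of Theorem~\ref{thm:char_Morita}, Remark~\ref{rem:adj_tilde_f}, and Theorem~\ref{thm:ber_bad}, and your two-out-of-three argument on the right-hand square of diagram~(\ref{diag:adjunctions}) is precisely the bookkeeping that citation leaves implicit.
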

\begin{proof}
The result follows from Theorem~\ref{thm:char_Morita}, Remark~\ref{rem:adj_tilde_f} and Theorem~\ref{thm:ber_bad}.
\end{proof}
\section{Symmetric operads, cartesian operads and algebraic theories}\label{sect:cart_oper_and_alg_th}\label{sect:Oper_AT}

We have ended the previous section with the desired characterization of Morita equivalences between simplicial algebraic theories. In the upcoming sections we are going to recall the connection that binds algebraic theories and operads pointed out by Kelly in \cite{Ke05}, that will allow us to exploit the results obtained for algebraic theories to prove a similar characterization of Morita equivalences for (simplicial) coloured operads.

We start by recalling the definition of  symmetric operad and cartesian operad enriched in a cartesian category $\Mcc$. Cartesian operads are just a different presentation of algebraic theories (see Section~\ref{sec:opToAt}). The insight given by Kelly is that cartesian operads (algebraic theories) and symmetric operads are basically the same construct in two different contexts: the cartesian one and the symmetric monoidal one, respectively.

Although classically coloured operads are defined as sequences of objects together with a composition product and a unit (see, for instance, \cite{BM07} for an account), for the purpose of this paper it will be more useful to define operads as monoids in certain categories of collections.

Even though the definitions and results of this section will be given for a general cartesian category $\Mcc$, we will only need to consider the cases in which $\Mcc$ is $\sets$ or $\ssets$ in the rest of the paper.

\subsection{Symmetric and cartesian ordered sequences}
Let $\Fin$ be a skeleton for the category of finite sets. Each object of $\Fin$ is uniquely determined by its cardinality. Let $\Sym$ be the wide subcategory of $\Fin$ spanned by all the isomorphisms.

Let $C$ be a fixed set. We will denote by $\Fin_C$ and $\Sym_C$ the comma categories $\Fin \downarrow C$ and $\Sym \downarrow C$, respectively. Their objects can be represented as finite sequences $\str{c}=(c_1,\ldots, c_n)$ in $C$. For such a sequence $\str{c}$ we denote by $|\str{c}|$ its cardinality. The sequence of cardinality zero will be denoted by $[-]$. The categories $\Fin_C\op$ and $\Sym_C\op$ can be characterized as being the free cartesian category generated by $C$ and the free symmetric monoidal category generated by $C$, respectively.

Let $\Sign{C}$ and $\Fign{C}$ denote the categories $\Sym_C\times C$ and $\Fin_C\times C$, respectively.
The objects of these categories will be written as $(\str{c}; c)$, where $\str{c}$ is an object of $\Sym_C$ or
$\Fin_C$, respectively, and $c\in C$. The inclusion of $\Sign{C}$ into $\Fign{C}$ will be denoted by
\begin{equation}\label{eq:inclusion_symfin}
    \lmorp{p}{\Sign{C}}{\Fign{C}}.
\end{equation}
Note that both $\Fin_C$ and $\Sym_C$ have as set of objects the set of finite ordered sequences of elements of $C$.

Let $(\Mcc,\times,\ast)$ be a bicomplete closed cartesian category. We will call $\Mcc^{\Sign{C}}$ and  $\Mcc^{\Fign{C}}$ the category of \emph{symmetric $C$-coloured $\Mcc$-collections} and the category of \emph{cartesian $C$-coloured collections}, respectively.

We will denote by $\Dayp$ both Day convolution tensor products on the categories $\Mcc^{\Fin_C}$ and $\Mcc^{\Sym_C}$. Observe that $\Dayp$ on $\Mcc^{\Fin_C}$ is isomorphic the cartesian product \cite[\S 8]{Ke05}.

Every cartesian $C$-collection $O$ in $\Mcc^{\Fign{C}}$ defines a functor
\[
\gfun{O(-;-)}{C}{\Mcc^{\Fin_C}}{c}{O(-;c).}
\]
By the universal property of $\Fin_C\op$, this extends to a product preserving functor
\[
\gfun{O^{(-)}}{\Fin_C\op}{\Mcc^{\Fin_C}}{(c_1,\dots,c_n)}{O(-;c_1) \Dayp O(-;c_2) \Dayp \dots \Dayp O(-;c_n).}
\]
In the same way, for every symmetric $C$-collection $O$ in $\Mcc^{\Sign{C}}$ we get a functor
\[
\gfun{O^{(-)}}{\Sym_C\op}{\Mcc^{\Sym_C}}{(c_1,\dots,c_n)}{O(-;c_1) \Dayp O(-;c_2) \Dayp \dots \Dayp O(-;c_n).}
\]
\subsection{Symmetric operads and cartesian operads}
There is a non\nobreakdash-symmetric monoidal product $\odot$ on $\Mcc^{\Sign{C}}$ defined as the coend:
\[
O\odot P \cong \int^{\str{c}\in \Sym_C} O({\str{c}}) \times P^{\str{c}}.
\]
In this formula, we identify $O$ with an object of $(\Mcc^{\Fin_C})^C$ and $P^{(-)}$ as an object of $(\Mcc^{\Fin_C\op})^{\Fin_C}$ so that the (parametrized) coend defined on the right is indeed an object of $\Mcc^{\Fign{C}}$.
The unit for this monoidal product is given by the $C$\nobreakdash-collection~$I_C$ given by:
\[
I_C(\str{c};c) = \begin{cases}
\ast & \text{if } \str{c}=c,\\
\emptyset & \text{otherwise.}
\end{cases}
\]

Similarly, there is a non\nobreakdash-symmetric monoidal product $\odot$ on $\Mcc^{\Fign{C}}$ defined as the coend:
\[
O\odot P \cong \int^{\str{c}\in \Fin_C} O({\str{c}}) \times P^{\str{c}}.
\]
The unit of this product is the object $I'_C$ of $\Mcc^{\Fign{C}}$ defined as $I'_C(\str{c}; c)=\Fin_C(c, \str{c})$.

The category of monoids in $(\Mcc^{\Fin_C\op})^{\Fin_C}$ with the monoidal product $\odot$ is the category of \emph{$C$-coloured symmetric operads in $\Mcc$}, denoted by $\EOperc{\Mcc}{C}$. The category of \emph{$C$-coloured cartesian operads in $\Mcc$} (or \emph{clones}), denoted by ${\ECloc{\Mcc}{C}}$, is the category of monoids in $(\Mcc^{\Fign{C}},\odot,I_C')$.

Given a $C$-coloured operad $\mathcal{O}$ in $\EOperc{\Mcc}{C}$, the category of \emph{$\mathcal{O}$-algebras in $\Mcc$}, denoted by $\Alg({\mathcal{O}})$, is the category of left $\mathcal{O}$-modules concentrated in arity 0. In other words, the functor $\zeta\colon C\to \Sign{C}$ that sends $c$ to
$([-];c)$ induces a functor $\zeta_!\colon \Mcc^C\to \Mcc^{\Sign{C}}$ by left Kan extension. An $\mathcal{O}$-algebra is then an object $X$ of $\Mcc^C$ together with a left action $\mathcal{O}$-action $\mathcal{O}\odot \zeta_!(X)\to \zeta_!(X)$.
Algebras over cartesian coloured operads are defined similarly.

Consider the left extension-restriction adjunction between symmetric and cartesian collections, induced by the morphism $p$ defined in (\ref{eq:inclusion_symfin})
\[
\xymatrix{
    {p_!}\colon {\Mcc^{\Sign{C}}}  \ar@<3pt>[r] &
    {\Mcc^{\Fign{C}}}	\ar@<3pt>[l] \colon {p^*}.
}
\]
\begin{propo}
The adjunction $(p_!,p^*)$ restricts to an adjunction between $C$\nobreakdash-co\-lou\-red symmetric operads and $C$-coloured cartesian operads in $\Mcc$:
    \[
    \xymatrix{
      p_!\colon  {\EOperc{\Mcc}{C}}  \ar@<3pt>[r] &
        {\ECloc{\Mcc}{C}}	\ar@<3pt>[l]\colon p^*.
    }
    \]
\end{propo}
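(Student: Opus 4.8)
The plan is to recognize the claim as an instance of a general principle: a monoidal adjunction --- one whose left adjoint is strong monoidal, so that the right adjoint acquires a conjugate lax monoidal structure --- restricts to an adjunction between the associated categories of monoids. So the whole proof reduces to exhibiting $(p_!,p^*)$ as a monoidal adjunction with respect to the substitution products $\odot$ on $\Mcc^{\Sign{C}}$ and $\Mcc^{\Fign{C}}$. The one nonformal ingredient is that $p_!$ is \emph{strong} monoidal for $\odot$; granting this, the lax monoidal structure on $p^*$ and the monoidality of unit and counit are automatic (doctrinal adjunction), a strong monoidal functor and a lax monoidal functor both send monoids to monoids, and the restricted functors are again adjoint. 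Since $\EOperc{\Mcc}{C}$ and $\ECloc{\Mcc}{C}$ are by definition the monoid categories of $(\Mcc^{\Sign{C}},\odot,I_C)$ and $(\Mcc^{\Fign{C}},\odot,I'_C)$, this gives the statement.

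To prove that $p_!$ is strong monoidal for $\odot$ I would argue in three steps. First, $p$ is the product with $C$ of the inclusion $\iota\colon\Sym_C\hookrightarrow\Fin_C$, which is strict symmetric monoidal for concatenation; hence the left Kan extension $\iota_!\colon(\Mcc^{\Sym_C},\Dayp)\to(\Mcc^{\Fin_C},\Dayp)$ is strong monoidal, by the usual functoriality of Day convolution along strong monoidal functors. Since $p_!$ acts as $\iota_!$ in the arity variable and trivially in the output-colour variable, this yields natural isomorphisms $\iota_!\bigl(O(-;c_1)\Dayp\cdots\Dayp O(-;c_n)\bigr)\cong(p_!O)(-;c_1)\Dayp\cdots\Dayp(p_!O)(-;c_n)$, i.e.\ $\iota_!\bigl(O^{\str{c}}\bigr)\cong(p_!O)^{\iota\str{c}}$ in the notation of the paper, where the power functor on the right is the one for cartesian collections, over $\Fin_C$. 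Second, $p_!$ is a left adjoint and so preserves coends: starting from $O\odot P=\int^{\str{c}\in\Sym_C}O(\str{c})\times P^{\str{c}}$ one pulls $p_!$ inside the coend and rewrites each $p_!\bigl(O(\str{c})\times P^{\str{c}}\bigr)$ as $O(\str{c})\times(p_!P)^{\iota\str{c}}$, obtaining $p_!(O\odot P)\cong\int^{\str{c}\in\Sym_C}O(\str{c})\times(p_!P)^{\iota\str{c}}$; on the other side $p_!O\odot p_!P=\int^{\str{c}'\in\Fin_C}(p_!O)(\str{c}')\times(p_!P)^{\str{c}'}$ with $(p_!O)(\str{c}')=\int^{\str{c}\in\Sym_C}\Fin_C(\iota\str{c},\str{c}')\times O(\str{c})$, and a Fubini interchange of coends followed by the co-Yoneda (density) formula over $\Fin_C$ collapses this to the same $\int^{\str{c}\in\Sym_C}O(\str{c})\times(p_!P)^{\iota\str{c}}$, so the canonical comparison $p_!(O\odot P)\to p_!O\odot p_!P$ is an isomorphism. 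Third, a direct evaluation of the left Kan extension identifies the unit $p_!I_C$ with $I'_C$, using that $I_C$ is supported on the length-one arities, where it is a coproduct of corepresentables.

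The routine but delicate part, and the place where I expect to have to be most careful, is precisely this coend bookkeeping: keeping straight the two variances (covariant in the multiplicand arity $\str{c}$, contravariant in the arity one substitutes along), applying co-Yoneda to the correct representable factor after the Fubini interchange, matching up the two incarnations of the power functor $(-)^{(-)}$ over $\Sym_C$ versus over $\Fin_C$, and verifying that the isomorphism produced really is the structural comparison morphism of $p_!$ rather than merely some isomorphism. A minor point worth recording is that all the coends exist and behave well because $\Mcc$ is bicomplete and cartesian closed, so $\times$ distributes over colimits, and because $\Sym_C$ and $\Fin_C$ are small ($C$ being a set). Once $p_!$ is known to be strong monoidal, the rest is formal, as explained above.
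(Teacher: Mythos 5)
Your proof is correct and follows the same architecture as the paper's: reduce everything to the claim that $p_!$ is strong monoidal for $\odot$, observe that the conjugate lax monoidal structure on $p^*$ is then automatic, and conclude that both legs of the adjunction carry monoids to monoids. The only difference is that the paper delegates the strong monoidality of $p_!$ to a citation of Kelly (\emph{On the operads of J.P.\ May}, \S 8), whereas you reprove it from scratch via Day convolution functoriality along $\iota\colon\Sym_C\hookrightarrow\Fin_C$, a Fubini interchange of coends, and co-Yoneda --- which is in fact the content of the cited passage, so the substance matches.
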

\begin{proof}
The functor $p_!$ preserves the monoidal product \cite[\S 8]{Ke05} and $I'_C=p_!(I_C)$. Hence, $p_!$ is strong monoidal functor.  It follows that the right adjoint $p^*$ is a lax monoidal functor. So both functors preserve monoids.
\end{proof}
In the follow up we will need a more explicit description of the left adjoint $p_!$. For every $\str{c}$ in $\Fign{C}$ let us denote by $\Ord{\str{c}}$ the full subcategory of $p\downarrow \str{c}$ spanned by those objects $\morp{f}{\str{b}}{\str{c}}$ such that the underlying map of finite sets
\[\morp{f}{\{1,\dots,|\str{b}|\}}{\{1,\dots,|\str{c}|\}}
\] is an ordered map.
\begin{lemma}
For every $\str{c}$ in $\Fign{C}$ the inclusion $i\colon \Ord{\str{c}}\to p\downarrow \str{c}$ is final.
\end{lemma}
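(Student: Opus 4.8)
The plan is to prove the stronger statement that $i\colon\Ord{\str{c}}\to p\downarrow\str{c}$ is an \emph{equivalence of categories}; since every equivalence of categories is final, this suffices. As $i$ is the inclusion of a full subcategory it is automatically fully faithful, so the only thing to check is essential surjectivity, and it is convenient to observe first that $p\downarrow\str{c}$ is a groupoid.

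First I would unwind the comma category. An object of $p\downarrow\str{c}$ is an object $(\str{b};b)$ of $\Sign{C}$ together with a morphism $f\colon(\str{b};b)\to\str{c}$ of $\Fign{C}$; since $C$ is discrete, such an $f$ exists only when $b$ equals the element of $C$ labeling $\str{c}$, and then $f$ is determined by a map of finite sets $\varphi\colon\{1,\dots,|\str{b}|\}\to\{1,\dots,|\str{c}|\}$ respecting the $C$\nobreakdash-labelings (so that $\str{b}$ is recovered from $\str{c}$ and $\varphi$). A morphism in $p\downarrow\str{c}$ from $f$ to $f'$ is a morphism $\theta$ of $\Sign{C}$ with $f'\theta=f$; since every morphism of $\Sign{C}=\Sym_C\times C$ is invertible, so is every morphism of $p\downarrow\str{c}$, that is, $p\downarrow\str{c}$ is a groupoid.

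Next I would verify essential surjectivity. Given an object $f$ of $p\downarrow\str{c}$ with underlying set map $\varphi$, choose a permutation $\sigma$ of $\{1,\dots,|\str{b}|\}$ for which $\varphi\circ\sigma$ is non\nobreakdash-decreasing; this is the standard factorization of a map of finite sets as a permutation followed by an ordered map, obtained by sorting the fibers of $\varphi$. Relabeling $\str{b}$ by setting $\str{b}'_j=\str{b}_{\sigma(j)}$, the permutation $\sigma$ becomes an isomorphism $(\str{b}';b)\to(\str{b};b)$ of $\Sign{C}$, hence an isomorphism in $p\downarrow\str{c}$, from the object $f\circ\sigma$ --- whose underlying set map $\varphi\circ\sigma$ is ordered, so it lies in $\Ord{\str{c}}$ --- onto $f$. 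Therefore $i$ is fully faithful and essentially surjective, hence an equivalence, hence final; in particular colimits over $p\downarrow\str{c}$ may be computed over $\Ord{\str{c}}$, which is what will be used to describe $p_!$ explicitly.

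If one prefers to avoid invoking ``equivalence $\Rightarrow$ final'', the same computation verifies the finality criterion directly: for each object $\str{x}$ of $p\downarrow\str{c}$ the category $\str{x}\downarrow i$ is nonempty by the sorting argument, and connected since, $p\downarrow\str{c}$ being a groupoid, there is exactly one morphism between any two of its objects. I do not expect a real obstacle here: the only point requiring care is the bookkeeping with the $C$\nobreakdash-labels and with the variance of the comma category, the sole genuine input being the permutation/ordered-map factorization of maps of finite sets.
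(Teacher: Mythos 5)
Your main line of argument is correct and has essentially the same mathematical content as the paper's proof: $\Ord{\str{c}}$ is a full subcategory of $p\downarrow\str{c}$, the factorization of any map of finite sets as a bijection followed by an ordered map (with the ordered map unique) shows it contains a representative of every isomorphism class, hence $i$ is an equivalence, and an equivalence is final. The paper uses that very same factorization fact to verify directly that each $d\downarrow i$ is non-empty and connected; you merely repackage it through the observation that $i$ is an equivalence.

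The one genuine flaw is in your fallback paragraph, where you assert that since $p\downarrow\str{c}$ is a groupoid, ``there is exactly one morphism between any two of its objects.'' That implication is false for groupoids in general, and it is false here in particular: the automorphism group of an object $f\colon\str{b}\to\str{c}$ in $p\downarrow\str{c}$ is precisely the group $\Sigma_f$ of bijections fixing the fibers of $f$, which is typically nontrivial --- indeed it is exactly these automorphism groups that produce the quotients $\ope{O}(\str{b};d)/\Sigma_f$ in the formula for $p_!$ immediately after the lemma. The correct reason $d\downarrow i$ is connected is fullness together with invertibility: given $(e_1,g_1)$ and $(e_2,g_2)$ in $d\downarrow i$, the morphism $g_2g_1^{-1}\colon e_1\to e_2$ of $p\downarrow\str{c}$ lies in the full subcategory $\Ord{\str{c}}$ and provides a morphism $(e_1,g_1)\to(e_2,g_2)$. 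Since the direct verification is offered only as an optional alternative and your primary argument is sound, the lemma is established, but the erroneous claim about groupoids should be removed or corrected.
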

\begin{proof}
We have to show that for every object $d$ in $p\downarrow \str{c}$, the comma category $d\downarrow i$ is non-empty and connected. This follows from the fact that every map of finite sets factors into a bijection followed by an ordered map, and the ordered map is uniquely determined.
\end{proof}
The preceding lemma can be used to simplify the computation of $p_!$.
Explicitly, for every operad $\ope{O}$ in $\EOperc{\Mcc}{C}$ and every $(\str{c};d)$ in $\Fign{C}$ we have that
\[
\begin{split}
(p_!\ope{O})(\str{c};d)&\cong \int^{\str{b}\in \Sym_C} \Fin_C(\str{b},\str{c})\times \ope{O}(\str{b};d)\cong\\
\underset{\{(\str{b},d) \to (\str{c},d)\}\in p\downarrow\str{c}}\colim \ope{O}(\str{b};d) &
\cong \underset{\{f:(\str{b},d)\to(\str{c},d)\}\in \Ord{\str{c}}}{\coprod} \ope{O}(\str{b};d)/\Sigma_f,
\end{split}
\]
where, for every $f\colon(\str{b},d)\to(\str{c},d)$ in $\Ord{\str{c}}$, the group $\Sigma_f$ is defined to be the subgroup of $\Sym_C(\str{b},\str{b})$ spanned by the automorphisms that fix the fibers of $f$, this is,
\[
\Sigma_f = \{ \sigma \in \Sym_C(\str{b},\str{b}) \mid f\sigma(i) = f(i) \text{ for every }i\in \{1,\dots,|\str{b}| \}\}.
 \]
Here we identify $\Sym_C(\str{b},\str{b})$ with a subset of $\Sigma_{|\str{b}|}$.

\subsection{From symmetric operads to algebraic theories}\label{sec:opToAt}

We can associate to every cartesian $C$-coloured $\Mcc$-operad $\mathcal{O}$ a $C$-sorted $\Mcc$-algebraic theory $\cAt{\mathcal{O}}$ as follows: for every $\str{c},\str{d}\in \Str{C}$ we set
\[
\cAt{\mathcal{O}}(\str{c},\str{d}) \cong \mathcal{O}^{\str{d}}(\str{c}) \cong \prod_{i=1}^{|\str{d}|}\mathcal{O}(\str{c};d_i).
\]
One can check that the category of $\cAt{\mathcal{O}}$-algebras is equivalent to the category of $\mathcal{O}$-algebras in $\Mcc$.

This construction defines a functor $\cAt{}\colon \ECloc{\Mcc}{C}\to \EAtc{\Mcc}{C}$ which is actually an equivalence of categories. Its inverse functor $\mathbb{C}$ assigns to every $C$\nobreakdash-sorted $\Mcc$\nobreakdash-algebraic theory $\cat{T}$ the cartesian $\Mcc$-operad $\mathbb{C}(\cat{T})$ defined as
$$\mathbb{C}(\cat{T})(\str{c};d) \cong \cat{T}(\str{c},d), \mbox{ for every  }(\str{c};d)\mbox{ in }\Fign{C}.
$$

We can now compose the functors $p_!$ and $\cAtf$ to get a functor
\[
\lmorp{\oatf}{\EOperc{\Mcc}{C}}{\EAtc{\Mcc}{C}}
\]
that associates to each symmetric $\Mcc$-operad an $\Mcc$-algebraic theory.
Explicitly, for every $\ope{O}$ in $\EOperc{\Mcc}{C}$ and every $\str{c},\str{d}\in \Str{C}$, we have that
\begin{equation}\label{mor_TO}
    \oat{\ope{O}}(\str{c},\str{d})\cong \prod_{i=1}^{|\str{d}|} \left(\underset{\{f:(\str{a};d_i)\to(\str{c};d_i)\}\in \Ord{(\str{c}; d_i)}}{\coprod} \ope{O}(\str{a};d_i)/\Sigma_f\right).
\end{equation}
Thus, a morphism $\mathbf{f}$ in $\oat{\ope{O}}(\str{c},\str{d})$ will be a sequence $\mathbf{f}=\{[f_i,\str{a}_i,o_i]\}_{i=1}^{|\str{d}|}$, where $f_i\colon (\str{a}_i;d_i)\to (\str{c};d_i)$ in $\Ord{(\str{c};d_i)}$ and $o_i\in\ope{O}(\str{a}_i; d_i)/\Sigma_{f_i}$. Composition in $\oat{\ope{O}}$ is defined as follows: if  $\mathbf{f}=\{[f_i,\str{a}_i,o_i]\}_{i=1}^{|\str{d}|}$ in $\oat{\ope{O}}(\str{c},\str{d})$ and $\mathbf{g}=\{[g_j,\str{b}_j,q_j]\}_{j=1}^{|\str{e}|}$ in $\oat{\ope{O}}(\str{d},\str{e})$, then

\begin{multline}
\mathbf{gf}=\{[f_{g_1(1)}\sqcup \dots \sqcup f_{g_1(|\str{b}_1|)} \sqcup \dots \sqcup f_{g_k(1)}\sqcup \dots \sqcup f_{g_k(|\str{b}_k|)},\\ \notag \str{a}_{g_1(1)}\ldots\str{a}_{g_k(|\str{b}_k|)}, (o_{g_1(1)},\ldots, o_{g_k(|\str{b}_k|)})\circ q_k]\}_{k=1}^{|\str{e}|}
\end{multline}
    
The category of $\ope{O}$-algebras is equivalent to the category of $\oat{\ope{O}}$-algebras in $\Mcc$.

\subsection{The category of coloured operads}\label{sect:ch_colour}
Let $\morp{f}{C}{D}$ be a function between sets. This function extends to two functors
\[
\gfun{f}{\Sign{C}}{\Sign{D}}{(c_1,\dots,c_n)}{(f(c_1),\dots,f(c_n)),}\
\gfun{f}{\Fign{C}}{\Fign{D}}{(c_1,\dots,c_n)}{(f(c_1),\dots,f(c_n)).}
\]
The restriction functors
\[
\begin{split}
\morp{f^*}{\Mcc^{\Sign{D}}}{\Mcc^{\Sign{C}}}\;\;{\rm and}\;\;
\morp{f^*}{\Mcc^{\Fign{D}}}{\Mcc^{\Fign{C}}}
\end{split}
\]
are both monoidal, therefore they restrict to functors
\begin{gather*}
    \morp{f^*}{\EOperc{\Mcc}{D}}{\EOperc{\Mcc}{C}}\;\;{\rm and}\;\;
    \morp{f^*}{\ECloc{\Mcc}{D}}{\ECloc{\Mcc}{C}}.
\end{gather*}
There is also an evident functor
\[
\morp{f^*}{\EAtc{\Mcc}{D}}{\EAtc{\Mcc}{C}}.
\]
It is easy to see that these assignments are natural in $f$. In other words they extend to functors:
\begin{gather*}
    \gfun{\mathsf{Op}}{\sets}{\Cat}{C}{\EOperc{\Mcc}{C},}\!\!\!\!
     \gfun{\mathsf{COp}}{\sets}{\Cat}{C}{\ECloc{\Mcc}{C},}\!\!\!\!
    \gfun{\mathsf{ATh}}{\sets}{\Cat}{C}{\EAtc{\Mcc}{C}.}
\end{gather*}
We can apply the (contravariant) Grothendieck construction to $\mathsf{Op}$ and $\mathsf{COp}$ to get two categories $\EOper{\Mcc}$ and $\EClo{\Mcc}$ fibered over $\sets$.
The category $\EOper{\Mcc}$ is called the category of \emph{symmetric coloured $\Mcc$-operads} while the category $\EClo{\Mcc}$ will be called the category of \emph{cartesian coloured $\Mcc$-operads}.

In the same way we can construct the category (fibered over $\sets$) of multi-sorted algebraic theories $\EAt{\Mcc}$. The category $\EAt{\Mcc}$ is equivalent to $\EClo{\Mcc}$.
The functors $p_!$ and $\cAtf$ considered in the previous sections define (pseudo)natural transformations between $\mathsf{Op}$ and $\mathsf{COp}$, and $\mathsf{COp}$ and $\mathsf{ATh}$, respectively. In fact, $p_! f^*\cong f^* p_!$ and $\cAtf f^*\cong f^*\cAtf$, for every function $\morp{f}{C}{D}$.

Via the Grothendieck construction, these natural transformations correspond to a diagram of functors
\begin{equation}\label{diag:Groth_adj}
\xymatrix{
    & \EClo{\Mcc} \ar[rd]^-{\cAtf} & \\
    \EOper{\Mcc} \ar[ru]^-{p_!} \ar[rr]_-{\oatf}& &
    \EAt{\Mcc},
}
\end{equation}
which is natural in $\Mcc$ for functors that preserve products and colimits.

\section{Morita equivalences of coloured operads}\label{sec:operads}
Recall that for every cocomplete cartesian category $\Mcc$ there is an adjunction between the category of small $\Mcc$-enriched categories and the category of $\Mcc$-enriched coloured operads
\begin{equation}\label{eq:adj_cat_oper}
j_!:\ECat{\Mcc}\myrightleftarrows{\rule{0.4cm}{0cm}} \EOper{\Mcc} :j^*.
\end{equation}
The right adjoint $j^*$ associates to every $C$\nobreakdash-coloured operad its underlying category with set of objects $C$. More explicitly, $j^*(\ope{O})(c,d)\cong \ope{O}(c;d)$ for every $c, d \in C$ and composition and identities are inherited directly from the ones in $\ope{O}$. Note that a morphism of operads $f$ is essentially surjective if and only if $j^*(f)$ is essentially surjective.

\subsection{A characterization of Morita equivalences of coloured operads}
In this section we characterize the Morita equivalences between operads in $\sets$ as those morphisms of coloured operads which induce an equivalence of categories between the respective categories of algebras. We will use the notation and results of Section~\ref{sect:cart_oper_and_alg_th} in the particular case $\mathcal{M}=\sets$.

\begin{defi}
    A morphism of coloured operads $f$ is a \emph{Morita equivalence} if it is fully faithful and the functor $j^*(f)$ is essentially surjective up to retracts.
\end{defi}

Our goal is to prove that a morphism $f$ of coloured operads in $\sets$ is a Morita equivalence if and only if the morphism of algebraic theories $\oat{f}$ is a Morita equivalence of algebraic theories, or equivalently, if and only if the induced adjunction $(f_!,f^*)$ between the categories of algebras is an equivalence of categories.

\begin{lemma}\label{lem:fully_faith}
    A morphism $f$ between symmetric coloured operads in $\sets$ is fully faithful if and only if the associated functor of algebraic theories $\oat{f}$ is fully faithful.
\end{lemma}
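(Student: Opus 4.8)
The plan is to unravel both sides of the claimed equivalence in terms of the explicit formula~(\ref{mor_TO}) for $\oat{\ope{O}}$ and reduce everything to a statement about the hom-sets $\ope{O}(\str{a};c)$ of the operads. For a morphism $\lmorp{f}{\ope{O}}{\ope{P}}$ of $C$-coloured (resp.\ $C\to D$, colour-changing) symmetric operads, recall that fully faithfulness of $f$ (in the operadic sense, as in the definition preceding Theorem~\ref{thm:moritasOp}) means that $\morp{f}{\ope{O}(\str{a};c)}{\ope{P}(f(\str{a}); f(c))}$ is a bijection for every $\str{a}$ in $\Str{C}$ and every $c\in C$. On the other side, $\oat{f}$ is fully faithful precisely when $\morp{\oat{f}}{\oat{\ope{O}}(\str{c},\str{d})}{\oat{\ope{P}}(f(\str{c}),f(\str{d}))}$ is a bijection for all $\str{c},\str{d}$ in $\Str{C}$. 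First I would use the product formula $\oat{\ope{O}}(\str{c},\str{d})\cong\prod_{i=1}^{|\str{d}|}\oat{\ope{O}}(\str{c},d_i)$ together with the fact that $f$ acts coordinatewise on the target sequence, to reduce the second condition to the case $\str{d}=d$ a single colour, i.e.\ to bijectivity of $\morp{\oat{f}}{\oat{\ope{O}}(\str{c},d)}{\oat{\ope{P}}(f(\str{c}),f(d))}$ for all $\str{c}$ and all $d$.

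Next I would analyze the single-colour component using~(\ref{mor_TO}):
\[
\oat{\ope{O}}(\str{c},d)\cong\underset{\{f\colon(\str{a};d)\to(\str{c};d)\}\in\Ord{(\str{c};d)}}{\coprod}\ope{O}(\str{a};d)/\Sigma_f.
\]
The crucial point is that the indexing category $\Ord{(\str{c};d)}$ of ordered maps $\{1,\dots,|\str{a}|\}\to\{1,\dots,|\str{c}|\}$ is a purely combinatorial gadget depending only on the underlying sequence of colours, and the colour-changing functor on $\Ord{}$ induced by $\varphi\colon C\to D$ is a bijection on these ordered maps once the target colours match up. So the map $\oat{f}$ on the coproduct is, index by index, the map $\ope{O}(\str{a};d)/\Sigma_f\to\ope{P}(f(\str{a});f(d))/\Sigma_f$ induced by $f$ (note that $f$ is $\Sigma$-equivariant, so the $\Sigma_f$-quotient descends). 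If $f$ is fully faithful, each $\morp{f}{\ope{O}(\str{a};d)}{\ope{P}(f(\str{a});f(d))}$ is a bijection, hence so is each quotient map and hence the coproduct map $\oat{f}$; this gives the ``only if'' direction after reassembling the product over the coordinates of $\str{d}$. For the ``if'' direction, I would isolate the summand of $\oat{\ope{O}}(\str{c},d)$ corresponding to the identity ordered map $\id\colon(\str{c};d)\to(\str{c};d)$: there $\Sigma_{\id}$ is trivial, so that summand is exactly $\ope{O}(\str{c};d)$, and the component of $\oat{f}$ on this summand lands in the corresponding summand of $\oat{\ope{P}}(f(\str{c}),f(d))$ and equals $\morp{f}{\ope{O}(\str{c};d)}{\ope{P}(f(\str{c});f(d))}$. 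Since a bijection of coproducts that respects the (bijectively-matched) indexing sets restricts to a bijection on each summand, fully faithfulness of $\oat{f}$ forces $f$ to be fully faithful.

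The main obstacle I anticipate is bookkeeping rather than conceptual: one must check carefully that $\oat{f}$ really does respect the coproduct decomposition — i.e.\ that it sends the summand indexed by $g\in\Ord{(\str{c};d)}$ into the summand indexed by the corresponding ordered map in $\Ord{(f(\str{c});f(d))}$ — and that the induced map on the $\Sigma_g$-quotients is well-defined and agrees with the map induced by $f$. This is where the explicit description of $p_!$ via the final subcategory $\Ord{\str{c}}\hookrightarrow p\downarrow\str{c}$ (the Lemma preceding Section~\ref{sec:opToAt}) and the equivariance of $f$ both get used; once that compatibility is nailed down, the rest is the elementary observation that a morphism of coproducts indexed bijectively over the same set is a bijection iff it is a bijection summandwise. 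A minor point to handle cleanly is the empty sequence / arity-zero case and the case $|\str{d}|=0$, where $\oat{\ope{O}}(\str{c},[-])$ is terminal and the statement is vacuous.
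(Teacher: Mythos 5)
Your proposal is correct and follows essentially the same strategy as the paper: decompose $\oat{f}$ summandwise into the quotient maps $\ope{O}(\str{b};c_0)/\Sigma_\alpha\to\ope{P}(f(\str{b});f(c_0))/\Sigma_\alpha$ over the (colour-independent) index set $\Ord{\str{c}}$, and isolate the summand indexed by $\id_{\str{c}}$, where $\Sigma_{\id}$ is trivial, to recover $f_{(\str{c};c_0)}$ itself. The paper packages this via a commutative square whose vertical arrows are the inclusions $\langle\id_{\str{c}}\rangle$ of that identity summand, which is just a more compressed form of the bookkeeping you lay out.
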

\begin{proof}
    Let $f\colon\ope{O}\to \ope{P}$ be a morphisms of symmetric coloured operads.  Let $C$ be the set of colours of $\ope{P}$.  For every $(\str{c};c_0)$ in $\Sign{C}$ the following diagram commutes:
    \begin{equation}\label{diag:ff}
    \xymatrix@C1.7cm{\ope{O}(\str{c};c_0)
        \ar[r]^-{f_{(\str{c};c_0)}}
        \ar[d]_-{\langle \id_{\str{c}}\rangle}
        &
        \ope{P}(f(\str{c});f(c_0))
        \ar[d]^-{\langle \id_{f(\str{c})}\rangle}
        \\
        \underset{\alpha:\{\str{b}\to \str{c}\} \in \Ord{\str{c}}}{\coprod} \ope{O}(\str{b};c_o)/\Sigma_\alpha
        \ar[r]^-{\underset{\alpha}\coprod f_{(\str{b};c_0)}/\Sigma_{\alpha}}
        &
        \underset{\alpha\in \Ord{\str{o}}}\coprod \ope{P}(f(\str{b});f(c_0))/\Sigma_\alpha,\\
    }
    \end{equation}
    where the bottom horizontal arrow is isomorphic to
    $$
    \xymatrix{\oat{\ope{O}}(\str{c},c_0)
        \ar[rr]^-{\oat{f}_{(\str{c};c_0)}}
        & &
        \oat{\ope{P}}(f(\str{c}),f(c_0)). 		
    }
    $$
    From the description of the components of $\oat{f}$ it is clear that if $f$ is fully faithful, then $\oat{f}$ is fully faithful.
    
    On the other hand, it follows from the commutativity of the above diagram that if $\oat{f}_{(\str{c},c_0)}$ is an isomorphism, then $f_{(\str{c};c_0)}$ is an isomorphism. Since $(\str{c},c_0)$ was chosen arbitrarily, this proves that if $\oat{f}$ is fully faithful then $f$ is fully faithful.
\end{proof}

\begin{lemma}
    Suppose that $\ope{O}$ is a symmetric $C$-coloured operad and let $c\in C$. If~$c$ is the retract of some $\str{d}=(d_1,\dots,d_n)$ in $\oat{\ope{O}}$, then there exists $1\leq j \leq n$ such that $c$ is a retract of $d_j$.
\end{lemma}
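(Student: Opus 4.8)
The plan is to prove this by direct inspection of the explicit description of $\oat{\ope{O}}$ recorded in~(\ref{mor_TO}) together with the composition rule stated immediately after it. Suppose $\str{d}=(d_1,\dots,d_n)$ and that $c$ is a retract of $\str{d}$, witnessed by $s\colon c\to\str{d}$ and $r\colon\str{d}\to c$ with $rs=\id_c$. The first step is to read off the shape of $s$ and $r$. Since the one-element sequence $\langle c\rangle$ has length one, every object of $\Ord{(\langle c\rangle;d_i)}$ is the constant ordered map out of a sequence $(c,\dots,c)$, and for such an $f$ one has $\Sigma_f=\Sigma_m$ where $m$ is the length of that sequence; hence~(\ref{mor_TO}) specializes to
\[
\oat{\ope{O}}(\langle c\rangle,d_i)\ \cong\ \coprod_{m\ge 0}\ope{O}(\underbrace{c,\dots,c}_{m};d_i)/\Sigma_m .
\]
Thus, by~(\ref{mor_TO}), $s=(s_1,\dots,s_n)$ with each $s_i$ given by an integer $m_i\ge 0$ and a class $[q_i]\in\ope{O}(c,\dots,c;d_i)/\Sigma_{m_i}$ ($m_i$ copies of $c$), while $r$ is given by an ordered map $f\colon\{1,\dots,p\}\to\{1,\dots,n\}$ and a class $[o]\in\ope{O}(d_{f(1)},\dots,d_{f(p)};c)/\Sigma_f$. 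Call the integer $m$ occurring in a morphism with one-element target its \emph{arity}. By the composition rule after~(\ref{mor_TO}), the arity of $rs$ is $\sum_{l=1}^{p}m_{f(l)}$ and $rs$ is represented by the operadic composite $o\circ(q_{f(1)},\dots,q_{f(p)})$; moreover $\id_c$ has arity $1$.

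Comparing arities in $rs=\id_c$ will then force $\sum_{l=1}^{p}m_{f(l)}=1$, so there is a unique index $l_0$ with $m_{f(l_0)}=1$ and $m_{f(l)}=0$ for $l\neq l_0$. I would then set $j:=f(l_0)$; since $m_j=1$ and the quotient by $\Sigma_1$ is trivial, $s_j$ is an honest unary operation $q_j\in\ope{O}(c;d_j)$, i.e.\ a morphism $q_j\colon c\to d_j$ in the underlying category. For $l\neq l_0$ the operation $q_{f(l)}\in\ope{O}(\,;d_{f(l)})$ is nullary, and plugging all of these into the inputs of $o$ other than the one in position $l_0$ (which has colour $d_{f(l_0)}=d_j$) produces a unary operation $\widehat{s}\in\ope{O}(d_j;c)$, i.e.\ a morphism $\widehat{s}\colon d_j\to c$. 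By associativity of operadic composition, $\widehat{s}\circ q_j=o\circ(q_{f(1)},\dots,q_{f(p)})$, which represents $rs=\id_c$; as the arity is $1$ there is no quotient to worry about, so $\widehat{s}\circ q_j=\id_c$ on the nose. Hence $q_j$ and $\widehat{s}$ exhibit $c$ as a retract of $d_j$ in $\oat{\ope{O}}$.

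The conceptual content is that arity is additive under composition, which immediately confines the section $s$ to a single ``coordinate'' $d_j$; the allowance of nullary operations is precisely what lets a bare inequality of arities, rather than injectivity of $f$, do the work. Accordingly, the only delicate point I anticipate is the bookkeeping required to match the composition rule after~(\ref{mor_TO}) with ordinary operadic composition and associativity; everything else is unwinding of definitions. (One could instead pass to the equivalent categories of algebras and use that $\oat{\ope{O}}(\langle c\rangle,-)$ is the free algebra on one generator of sort $c$, but since coproducts of algebras are not computed levelwise this would reproduce, rather than shortcut, the arity count above.)
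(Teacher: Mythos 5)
Your proof is correct and follows essentially the same route as the paper's: the paper's step ``there exists $1\leq k\leq|\str{b}|$ such that $\str{a}_{g(k)}=c$ and $\str{a}_{g(i)}=[-]$ for $i\neq k$'' is exactly your arity count $\sum_l m_{f(l)}=1$, and both proofs then build the retraction through the singled-out $d_j$ by plugging the nullary pieces into the remaining inputs of $o$. The only difference is presentational: you make the additivity-of-arity observation explicit, whereas the paper states the consequence directly.
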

\begin{proof}
    Recall from \eqref{mor_TO} the notation for the morphisms and composition in $\oat{\ope{O}}$. By assumption, there exist
    \[\mathbf{f}=\{[f_i,\str{a}_i,o_i]\}_{i=1}^{|\str{d}|} \in \oat{\ope{O}}(c, \str{d})\;\;\mbox{and}
    \;\;\mathbf{g}= [g,\str{b},q] \in \oat{\ope{O}}(\str{d},c)
    \]
    such that $\mathbf{g}\mathbf{f}=\id_{c}$, that is,
    \[
    [f_{g(1)}\sqcup \dots \sqcup f_{g(|\str{b}|)}, \str{a}_{g(1)}\ldots \str{a}_{g(|\str{b}|)},(o_{g(1)},\dots,o_{g(|\str{b}|)})\circ q] = [\id_c, c,\id_c].
    \]
    This implies that there exists $1\leq k \leq |\str{b}|$ such that $\str{a}_{g(k)}=c$ and $\str{a}_{g(i)}=[-]$ for all $i\neq k$.
    Therefore $o_{g(k)}\in \ope{O}(c;d_{g(k)})$, $f_{g(k)}=\id_c$ and $o_i\in \ope{O}(-; d_i)$ for every $i\neq k$.
    
    Let $q'\in \ope{O}(d_{g(k)}; c)$ be the composition
    \[
    (o_{g(1)},\dots,o_{g(k-1)},\id_{d_{g(k)}},o_{g(k+1)},\dots,o_{g(|\str{b}|)})\circ q
    \]
    and set
    \[
    \mathbf{f}'=[f_{g(k)},c, o_{g(k)}] \in \oat{\ope{O}}(c,d_{g(k)})\;\;\mbox{and}\;\;
    \mathbf{g}'=[\id_{d_{g(k)}},d_{g(k)},q'] \in \oat{\ope{O}}(d_{g(k)},c).
    \]
    By construction $\mathbf{g}'\mathbf{f}'=\id_c$ and therefore $c$ is a retract of $d_{g(k)}$.
\end{proof}

\begin{coro}\label{coro:key_retract}
    A morphism $f$ of symmetric coloured operads in $\sets$ is essentially surjective up to retracts if and only if $\oat{f}$ is essentially surjective up to retracts.
\end{coro}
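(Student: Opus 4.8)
The plan is to reduce both directions to the preceding lemma, using two elementary facts. First, being a morphism of algebraic theories, $\oat{f}$ acts on objects componentwise, so that $\oat{f}(c_1,\dots,c_n) = (f(c_1),\dots,f(c_n))$; moreover, in an algebraic theory the sequence $(d_1,\dots,d_n)$ is the product of the length-one sequences $(d_1),\dots,(d_n)$, a product which $\oat{f}$ preserves. Second, for colours $c$ and $d$ of $\ope{O}$, $c$ is a retract of $d$ in the underlying category $j^*(\ope{O})$ if and only if the length-one object $(c)$ is a retract of $(d)$ in $\oat{\ope{O}}$: one direction is the obvious embedding of $j^*(\ope{O})$ into $\oat{\ope{O}}$ on length-one objects, and for the other, one notes that the splitting maps exhibited in the proof of the preceding lemma have the shape $[\id,(e),o]$ with $o$ an operation of $\ope{O}$, hence are morphisms of $j^*(\ope{O})$.

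For the forward implication, I would take an arbitrary object $\str{d} = (d_1,\dots,d_n)$ of $\oat{\ope{P}}$. Since $j^*(f)$ is essentially surjective up to retracts, for each $i$ there is a colour $c_i$ of $\ope{O}$ with $d_i$ a retract of $f(c_i)$ in $j^*(\ope{P})$, and hence $(d_i)$ a retract of $(f(c_i))$ in $\oat{\ope{P}}$. As retracts are closed under finite products, $\str{d} = (d_1)\times\cdots\times(d_n)$ is a retract of $(f(c_1))\times\cdots\times(f(c_n)) = \oat{f}(c_1,\dots,c_n)$, which lies in the image of $\oat{f}$.

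For the converse, assuming $\oat{f}$ essentially surjective up to retracts, I would fix a colour $d$ of $\ope{P}$ and apply the hypothesis to the length-one object $(d)$, obtaining an object $\str{c} = (c_1,\dots,c_n)$ of $\oat{\ope{O}}$ together with maps exhibiting $(d)$ as a retract of $\oat{f}(\str{c}) = (f(c_1),\dots,f(c_n))$ in $\oat{\ope{P}}$ (the isomorphism implicit in the definition being absorbed into the retraction maps). The preceding lemma, applied to $\ope{P}$ and the sequence $(f(c_1),\dots,f(c_n))$, then provides an index $j$ with $d$ a retract of $f(c_j)$, and by the second fact above this retract can be taken in $j^*(\ope{P})$. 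Hence $j^*(f)$ is essentially surjective up to retracts.

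The combinatorial core of the argument has already been isolated in the preceding lemma, so the only thing that really needs attention is the bookkeeping that matches the operadic notion of essential surjectivity up to retracts --- a condition on colours phrased through $j^*$ --- with the usual categorical notion applied to $\oat{f}$, whose objects are all finite sequences of colours. This is precisely what the two facts above supply, together with the decomposition of every object of an algebraic theory as a finite product of length-one sequences.
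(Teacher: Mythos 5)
Your proof is correct and takes the route the paper intends: the paper states this corollary without proof, leaving it as an immediate consequence of the preceding lemma, and your argument supplies exactly the missing bookkeeping. The forward direction via componentwise decomposition into length-one sequences and closure of retracts under products, and the converse via the lemma applied to $(d)$ together with the observation that the retraction it produces actually lives in $j^*(\ope{P})$, is precisely the intended reduction.
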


\begin{theo}\label{theo:moritaOpSet}
    Let $\morp{f}{\ope{O}}{\ope{P}}$ be a morphism of symmetric coloured operads in $\sets$. The following are equivalent:
    \begin{itemize}
        \item[{\rm (i)}] The morphism $f$ is a Morita equivalence of operads.
        \item[{\rm (ii)}] The functor $\oat{f}$ is a Morita equivalence of algebraic theories.
        \item[{\rm (iii)}] The induced adjunction
        \[
        f_!:\Alg(\ope{O})\myrightleftarrows{\rule{0.4cm}{0cm}} \Alg(\ope{P}): f^*
        \]
        is an equivalence of categories.
    \end{itemize}
\end{theo}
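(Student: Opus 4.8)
The plan is to prove that the three conditions are equivalent by establishing $(\mathrm{i})\Leftrightarrow(\mathrm{ii})$ and $(\mathrm{ii})\Leftrightarrow(\mathrm{iii})$, using the dictionary between symmetric coloured operads and algebraic theories built in Section~\ref{sect:Oper_AT} together with Theorem~\ref{theo:moritaAtSet}.

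For $(\mathrm{i})\Leftrightarrow(\mathrm{ii})$ I would simply assemble the two comparison results already proved. By definition, $f$ is a Morita equivalence of operads exactly when $f$ is fully faithful and $j^*(f)$ is essentially surjective up to retracts. Lemma~\ref{lem:fully_faith} turns the first clause into the statement that $\oat{f}$ is fully faithful; and since a colour is a retract of another colour in an operad precisely when it is so in the underlying category, essential surjectivity up to retracts of $j^*(f)$ coincides with that of the underlying functor $u(\oat{f})$, which is the content of Corollary~\ref{coro:key_retract}. Hence $f$ is a Morita equivalence of operads if and only if $u(\oat{f})$ is fully faithful and essentially surjective up to retracts, that is, if and only if $\oat{f}$ is a Morita equivalence of algebraic theories.

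For $(\mathrm{ii})\Leftrightarrow(\mathrm{iii})$ I would apply Theorem~\ref{theo:moritaAtSet} to the morphism $\oat{f}\colon\oat{\ope{O}}\to\oat{\ope{P}}$ of algebraic theories: it is a Morita equivalence if and only if the extension-restriction adjunction $\oat{f}_!\colon\Alg(\oat{\ope{O}})\rightleftarrows\Alg(\oat{\ope{P}})\colon\oat{f}^*$ is an equivalence of categories. It then remains to transport this statement along the equivalences $\Alg(\ope{O})\simeq\Alg(\oat{\ope{O}})$ and $\Alg(\ope{P})\simeq\Alg(\oat{\ope{P}})$ recalled in Section~\ref{sec:opToAt}. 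The key point is that these equivalences are compatible with restriction: the assignment $\ope{O}\mapsto\oat{\ope{O}}$ is (pseudo)natural in the operad, and an $\ope{O}$-algebra $X$, regarded as an object of $\sets^{C}$ equipped with an action, corresponds under the equivalence to the product-preserving functor $\str{c}\mapsto X^{\str{c}}$ on $u(\oat{\ope{O}})$; precomposing such a functor with $u(\oat{f})$ recovers exactly the operadic restriction $f^*$. Thus $f^*$ is identified with $\oat{f}^*$ under the two equivalences, and by uniqueness of left adjoints $f_!$ is identified with $\oat{f}_!$. Consequently $(f_!,f^*)$ is an equivalence of categories if and only if $(\oat{f}_!,\oat{f}^*)$ is, which closes the chain.

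The step I expect to need the most care is this last compatibility: checking that the equivalence $\Alg(\ope{O})\simeq\Alg(\oat{\ope{O}})$ intertwines the two restriction functors. This requires keeping track of the colour map attached to $f$ through the Grothendieck constructions of $\mathsf{Op}$ and $\mathsf{ATh}$, and through the identification of $\ope{O}$-algebras with left modules concentrated in arity $0$ on one side and with product-preserving functors on the other. Everything else is a direct appeal to Lemma~\ref{lem:fully_faith}, Corollary~\ref{coro:key_retract} and Theorem~\ref{theo:moritaAtSet}.
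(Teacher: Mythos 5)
Your proposal is correct and follows essentially the same route as the paper: the equivalence (i)$\Leftrightarrow$(ii) is obtained by combining Lemma~\ref{lem:fully_faith} with Corollary~\ref{coro:key_retract}, and (ii)$\Leftrightarrow$(iii) reduces to Theorem~\ref{theo:moritaAtSet} via the equivalence of algebra categories $\Alg(\ope{O})\simeq\Alg(\oat{\ope{O}})$. The only difference is one of detail: the paper treats the compatibility of the restriction functors with this equivalence as immediate, whereas you spell it out, which is a reasonable point to be careful about but not a different argument.
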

\begin{proof}
    By Lemma~\ref{lem:fully_faith} and Corollary~\ref{coro:key_retract} the morphism $f$ is fully faithful and essentially surjective up to retracts if and only if $\oat{f}$ is so.  This proves the equivalence between (i) and (ii).  The equivalence between (ii) and (iii) is immediate since the categories of algebras for $\ope{O}$ and $\oat{O}$ are equivalent.
\end{proof}
\subsection{A characterization of Morita equivalences of simplicial operads}

Let $\mathcal{O}$ be a $C$-coloured operad in simplicial sets. Its category of algebras $\Alg({\mathcal{O}})$ admits a transferred model structure via the free-forgetful adjunction, which is called the \emph{projective model structure}; see \cite[Theorem 2.1]{BM07}. The weak equivalences and the fibrations are the morphisms $X\to Y$ such that $X(\mathbf{c}; d)\to Y(\mathbf{c};d)$ is a weak equivalence or a fibration of simplicial sets for every $(\mathbf{c}, d) \in \Sign{C}$, respectively. Moreover, for a morphism of operads $f\colon \mathcal{O}\to\mathcal{P}$ the induced adjunction
\[
f_!:\Alg(\ope{O})\myrightleftarrows{\rule{0.4cm}{0cm}} \Alg(\ope{P}): f^*
\]
is a Quillen pair with respect to the corresponding projective model structures.

Recall that a symmetric $C$-coloured operad $\ope{O}$ is called \emph{$\Sigma$-cofibrant} if for every~$(\str{c}, c)$ in $\Sign{C}$ the $\Sym_C(\str{c},\str{c})$-module $\ope{O}(\str{c};c)$ is cofibrant in $\ssets^{\Sym_C(\str{c},\str{c})}$ endowed with the projective model structure.

The following definition extends the definition of Morita weak equivalence between simplicial categories to simplicial operads (see \cite[Definition 5.3]{CG19}):
\begin{defi}
  Let $f\colon \ope{O} \to \ope{P}$ be a morphism of simplicial coloured operads. We say that
    \begin{itemize}
        \item[{\rm (i)}] $f$ is \emph{homotopically fully faithful} if $f\colon \ope{O}(\mathbf{c}, c') \to \ope{P}(f(\mathbf{c}), f(c'))$ is a weak equivalence for every signature $(\mathbf{c}, c')\in \Sign{C}$, where $C$ is the set of colours of $\ope{O}$;
        \item[{\rm (ii)}] $f$ is \emph{homotopically essentially surjective up to retracts} if the functor $\pi_0(j^*(f))$ is essentially surjective up to retracts;
        \item[{\rm (iii)}] $f$ is a \emph{Morita weak equivalence} if it is homotopically fully faithful and homotopically essentially surjective up to retracts.
    \end{itemize} 
\end{defi}

\begin{propo}\label{propo:hom fully faith}
    A map $f$ between $\Sigma$-cofibrant simplicial coloured operads is homotopically fully faithful if and only if the associated map $\oat{f}$ of simplicial algebraic theories is homotopically fully faithful.
\end{propo}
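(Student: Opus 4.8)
The plan is to reduce the statement to a levelwise analysis of the formula \eqref{mor_TO} for $\oat{\ope{O}}$, exactly as in the proof of Lemma~\ref{lem:fully_faith}, but keeping track of homotopical information. Fix a morphism $f\colon \ope{O}\to \ope{P}$ of $\Sigma$-cofibrant simplicial coloured operads, let $C$ be the set of colours of $\ope{O}$, and consider the commutative diagram~\eqref{diag:ff} (now in $\ssets$ rather than $\sets$), whose bottom horizontal map is $\oat{f}_{(\str{c};c_0)}\colon \oat{\ope{O}}(\str{c},c_0)\to \oat{\ope{P}}(f(\str{c}),f(c_0))$. The vertical maps are split monomorphisms: the vertical map on the left exhibits $\ope{O}(\str{c};c_0)$ as the summand of $\coprod_{\alpha\in\Ord{\str{c}}}\ope{O}(\str{b};c_0)/\Sigma_\alpha$ corresponding to the identity $\alpha=\id_{\str{c}}$ (for which $\Sigma_{\id}$ is trivial), and similarly on the right. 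Since weak equivalences of simplicial sets are closed under coproducts, $\oat{f}_{(\str{c};c_0)}$ being a weak equivalence for all $(\str{c};c_0)$ is equivalent to $\coprod_{\alpha}f_{(\str{b};c_0)}/\Sigma_\alpha$ being a weak equivalence for all $\str{c}$, which in turn is equivalent (summandwise) to each quotient map $f_{(\str{b};c_0)}/\Sigma_\alpha\colon \ope{O}(\str{b};c_0)/\Sigma_\alpha \to \ope{P}(f(\str{b});f(c_0))/\Sigma_\alpha$ being a weak equivalence, for every $\str{b}$ and every subgroup of the form $\Sigma_\alpha\le \Sym_C(\str{b},\str{b})$.

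So the crux is this: assuming $f$ is homotopically fully faithful, i.e. each $f_{(\str{b};c_0)}\colon \ope{O}(\str{b};c_0)\to \ope{P}(f(\str{b});f(c_0))$ is a weak equivalence of simplicial sets, I need to deduce that it remains a weak equivalence after passing to $\Sigma_\alpha$-orbits, and conversely. The forward direction is where $\Sigma$-cofibrancy enters. By definition, $\ope{O}(\str{b};c_0)$ is cofibrant in the projective model structure on $\ssets^{\Sym_C(\str{b},\str{b})}$, hence in particular cofibrant (in fact $\Sigma_\alpha$-free) as a $\Sigma_\alpha$-simplicial set for the subgroup $\Sigma_\alpha$; the same holds for $\ope{P}(f(\str{b});f(c_0))$ viewed via $f$, since $\ope{P}$ is $\Sigma$-cofibrant and $\Sym_C(f(\str{b}),f(\str{b}))\supseteq \Sigma_\alpha$ (identifying the relevant subgroups through $f$). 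Because $\Sym_C(\str{b},\str{b})$ and its subgroups are finite discrete groups, a map of cofibrant objects of $\ssets^{\Sigma_\alpha}$ that is an underlying weak equivalence is a weak equivalence in $\ssets^{\Sigma_\alpha}$, and the orbit functor $(-)/\Sigma_\alpha$, being left Quillen from $\ssets^{\Sigma_\alpha}$ to $\ssets$, preserves weak equivalences between cofibrant objects. This gives that $f_{(\str{b};c_0)}/\Sigma_\alpha$ is a weak equivalence, and running over all summands and all $\str{c}$ shows $\oat{f}$ is homotopically fully faithful. For the converse direction, one does not even need $\Sigma$-cofibrancy: if each $\oat{f}_{(\str{c};c_0)}$ is a weak equivalence, then by the splitting above the summand $f_{(\str{c};c_0)}$ is a retract of a weak equivalence, hence a weak equivalence, so $f$ is homotopically fully faithful.

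The main obstacle is the orbit-versus-weak-equivalence step in the forward direction: orbits of a group action do not in general preserve weak equivalences, and one genuinely needs the $\Sigma$-cofibrancy hypothesis to guarantee freeness of the relevant actions so that the quotient computes the homotopy orbits. The cleanest way to carry this out is to observe that a cofibrant object of $\ssets^{G}$ for $G$ a finite group has a free $G$-action away from the basepoint issues (here everything is unpointed, so ``free $G$-CW-like''), and then invoke that $(-)/G$ sends such a weak equivalence to a weak equivalence — equivalently, that for free $G$-simplicial sets $A$ the natural map $A/G \to A\times_G EG$ is an iso and homotopy orbits are invariant. I would phrase this via the left Quillen functor $(-)/G\colon \ssets^{G}\to\ssets$ and Ken Brown's lemma. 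Once this lemma is in place, the rest is the bookkeeping with \eqref{diag:ff} and coproducts indicated above, together with the remark that the subgroups $\Sigma_\alpha$ arising for $\ope{O}$ match, through $f$, the subgroups arising for $\ope{P}$ (both being ``fiber-fixing'' subgroups of the relevant symmetric groups, and $f$ being the identity on colours up to the colour map), so that $\Sigma$-cofibrancy of $\ope{P}$ supplies cofibrancy of $\ope{P}(f(\str{b});f(c_0))$ over exactly the group we need.
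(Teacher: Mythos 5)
Your proposal is correct and follows essentially the same route as the paper: both reduce to diagram~\eqref{diag:ff}, invoke the fact that restriction along $\Sigma_\alpha\hookrightarrow\Sym_C(\str{b},\str{b})$ preserves projective cofibrancy (so that $\Sigma$-cofibrancy supplies $\Sigma_\alpha$-cofibrant objects), use that the orbit functor $(-)/\Sigma_\alpha$ preserves weak equivalences between cofibrant objects, and conclude via the coproduct decomposition of $\oat{f}_{(\str{c};c_0)}$. The only cosmetic difference is that for the converse you argue via a retract of a weak equivalence (which is slightly imprecise when the identity summand is empty), whereas the paper states directly that a coproduct of maps of simplicial sets is a weak equivalence if and only if each summand is, which handles that case cleanly.
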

\begin{proof}
    The proof uses the same arguments as in the proof of Lemma~\ref{lem:fully_faith}. The main difference being
    that in this case we are dealing with weak equivalences instead of isomorphisms.
    
    We need to check that in the commutative diagram~(\ref{diag:ff}) the top horizontal map is a weak equivalence if and only if the bottom horizontal map is a weak equivalence. This will follow at once from the following facts about projective model structures on simplicial sets (we use the same notation as in the proof of Lemma~\ref{lem:fully_faith}):
    \begin{itemize}
        \item[{\rm (i)}] Consider the inclusion of groups $\Sigma_{\alpha}\to \Sym_C(\str{c},\str{c})$. Then the forgetful functor
        \[
        \ssets^{\Sym_C(\str{c},\str{c})} \longrightarrow \ssets^{\Sigma_{\alpha}}
        \]
        between the corresponding projective model structures preserves cofibrant objects; see \cite[Lemma 2.5.1]{BM06}.
        \item[{\rm (ii)}] If $\morp{f}{X}{Y}$ is a $\Sigma_{\alpha}$-equivariant map between cofibrant objects in $\ssets^{\Sigma_{\alpha}}$, which is a projective weak equivalence, then $\morp{f/\Sigma_{\alpha}}{X/\Sigma_{\alpha}}{Y/\Sigma_{\alpha}}$ is a weak equivalence of simplicial sets.
        \item[{\rm (iii)}] A coproduct of maps $\coprod_{i\in I} f_i$ in $\ssets$ is a weak equivalence if and only if $f_i$ is a weak equivalence for every $i\in I$. \qedhere
        
    \end{itemize}
\end{proof}

Consider the functor $\pi_0\colon \ssets\to \sets$.  This functor preserves products and colimits and thus, by naturality of diagram~(\ref{diag:Groth_adj}), it induces a commutative diagram
\[
\xymatrix{
    \sOper \ar[r]^-{\oatf} \ar[d]_-{\pi_0} &
    \sAt \ar[d]^-{\pi_0} \\
    \Oper \ar[r]_-{\oatf} &
    \At. \\
}
\]

\begin{propo}\label{prop:hom ess retract}
    A map $f$ between simplicial coloured operads is homotopically essentially surjective up to retracts if and only if $\oat{f}$ is homotopically essentially surjective up to retracts.
\end{propo}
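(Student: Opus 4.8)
The plan is to reduce the statement to the discrete case, already handled in Corollary~\ref{coro:key_retract}, by chasing the naturality square relating $\oatf$ and $\pi_0$ displayed just above. Unwinding the definitions, $f$ is homotopically essentially surjective up to retracts exactly when $\pi_0(j^*(f))$ is essentially surjective up to retracts, and $\oat{f}$ is homotopically essentially surjective up to retracts exactly when $\pi_0(u(\oat{f}))$ is essentially surjective up to retracts; so the whole content is a chain of equivalences linking these two conditions.

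First I would observe that $\pi_0$ commutes with the relevant forgetful functors, namely $\pi_0\circ j^*\cong j^*\circ\pi_0$ as functors $\sOper\to\Cat$ and $\pi_0\circ u\cong u\circ\pi_0$ as functors $\sAt\to\Cat$. This is immediate since in each case $\pi_0$ is applied hom-object-wise and leaves the colours (respectively the sorts) untouched, while $j^*$ and $u$ merely discard extra structure. Hence $\pi_0(j^*(f))$ is essentially surjective up to retracts if and only if $j^*(\pi_0(f))$ is, i.e.\ if and only if the morphism of coloured operads in $\sets$ obtained by applying $\pi_0$ to $f$ is essentially surjective up to retracts; dually, $\pi_0(u(\oat{f}))$ is essentially surjective up to retracts if and only if $u(\pi_0(\oat{f}))$ is. Next, the commutativity of the square relating $\oatf$ and $\pi_0$ (which itself follows from $\pi_0$ preserving products and colimits, hence acting on the whole diagram~\eqref{diag:Groth_adj}) gives $\pi_0(\oat{f})\cong\oat{\pi_0(f)}$ in $\At$, so $u(\pi_0(\oat{f}))$ is essentially surjective up to retracts if and only if $u(\oat{\pi_0(f)})$ is. Finally, Corollary~\ref{coro:key_retract} applied to the morphism $\pi_0(f)$ of symmetric coloured operads in $\sets$ says precisely that $u(\oat{\pi_0(f)})$ is essentially surjective up to retracts if and only if $j^*(\pi_0(f))$ is. Composing these equivalences proves the proposition.

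I do not expect a genuine obstacle: the argument is a diagram chase. The only point demanding a little care is the compatibility of $\pi_0$ with $j^*$, $u$ and $\oatf$; the first two are elementary, and the third is exactly the naturality statement that produced the displayed square, so both are already available.
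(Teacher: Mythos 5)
Your proof is correct and follows essentially the same route as the paper's: unwind the definitions to reduce everything to the set level via $\pi_0$, invoke the commutativity $\pi_0(\oat{f})\cong\oat{\pi_0(f)}$, and then apply Corollary~\ref{coro:key_retract} to $\pi_0(f)$. You are a bit more explicit than the paper in tracking the forgetful functors $j^*$ and $u$ and verifying that $\pi_0$ commutes with each, but this is a presentational refinement rather than a different argument.
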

\begin{proof}
    A map of simplicial operads is homotopically essentially surjective up to retracts if the map of simplicial categories $\pi_0(f)$ is essentially surjective up to retracts. By Corollary~\ref{coro:key_retract}, $\pi_0(f)$ is essentially surjective up to retracts if and only if $\oat{\pi_0(f)}\cong\pi_0(\oat{f})$ is essentially surjective up to retracts. But this happens if and only if $\oat{f}$ is homotopically essentially surjective up to retracts.
\end{proof}
\begin{theo}\label{theo:main morita oper}
    Let  $\morp{f}{\ope{O}}{\ope{P}}$ be a map between $\Sigma$-cofibrant simplicial coloured operads. The following are equivalent:
    \begin{enumerate}
        \item[{\rm (i)}] The map $f$ is a Morita weak equivalence of simplicial operads.
        \item[{\rm (ii)}] The functor $\oat{f}$ is a Morita weak equivalence of simplicial algebraic theories.
        \item[{\rm (iii)}] The Quillen pair
        \[
        f_!:\Alg(\ope{O})\myrightleftarrows{\rule{0.4cm}{0cm}} \Alg(\ope{P}): f^*
        \]
        is a Quillen equivalence.
    \end{enumerate}
\end{theo}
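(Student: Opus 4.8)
The plan is to establish the three equivalences by reducing everything to the already-proven algebraic theory case, Corollary~\ref{cor:char_Morita_at}, via the functor $\oatf$. The equivalence \mbox{(i)$\Leftrightarrow$(ii)} is immediate from Proposition~\ref{propo:hom fully faith} and Proposition~\ref{prop:hom ess retract}: a map $f$ between $\Sigma$-cofibrant simplicial operads is homotopically fully faithful (resp.\ homotopically essentially surjective up to retracts) precisely when $\oat{f}$ is, so $f$ is a Morita weak equivalence of operads if and only if $\oat{f}$ is a Morita weak equivalence of algebraic theories. Note that the $\Sigma$-cofibrancy hypothesis is used only here (for the fully faithful part).

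Next I would handle \mbox{(ii)$\Leftrightarrow$(iii)}. By Corollary~\ref{cor:char_Morita_at}, $\oat{f}$ is a Morita weak equivalence of simplicial algebraic theories if and only if the Quillen pair $(\oat{f})_!\colon\sAlg{\oat{\ope{O}}}\rightleftarrows\sAlg{\oat{\ope{P}}}\colon(\oat{f})^*$ is a Quillen equivalence. So it remains to identify this Quillen pair, up to Quillen equivalence, with the pair $f_!\colon\Alg(\ope{O})\rightleftarrows\Alg(\ope{P})\colon f^*$ appearing in the statement. Here I would invoke the fact recalled at the end of Section~\ref{sec:opToAt} that the category of $\ope{O}$-algebras in $\ssets$ is equivalent to the category of $\oat{\ope{O}}$-algebras, and similarly for $\ope{P}$; moreover these equivalences are compatible with the extension--restriction adjunctions induced by $f$ and $\oat{f}$, since the construction $\oatf$ is functorial and diagram~(\ref{diag:Groth_adj}) is natural. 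One then checks that these equivalences of categories identify the projective model structure on $\Alg(\ope{O})$ (as defined via the Boardman--Vogt free--forgetful transfer, \cite[Theorem 2.1]{BM07}) with the projective model structure on $\sAlg{\oat{\ope{O}}}$ (the Rezk-transferred one): both have weak equivalences and fibrations detected objectwise on the underlying $C$-indexed family of simplicial sets, so the identification is a comparison of weak equivalences and fibrations under the equivalence of underlying categories. Consequently $(f_!,f^*)$ is a Quillen equivalence if and only if $((\oat{f})_!,(\oat{f})^*)$ is, which completes the cycle.

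The main obstacle I anticipate is the verification that the equivalence $\Alg(\ope{O})\simeq\sAlg{\oat{\ope{O}}}$ is genuinely compatible with the two constructions $p_!$ and $\cAtf$ at the level of algebras and their adjunctions, and that the two transferred model structures really do coincide under it. The compatibility with $\cAtf$ is the content of the remark ``the category of $\cAt{\mathcal{O}}$-algebras is equivalent to the category of $\mathcal{O}$-algebras'' together with the fact that $\cAtf$ is an equivalence of categories with a known inverse $\mathbb{C}$, so the adjunctions induced by $\cAt{f}$ and by $f$ (viewed as a map of cartesian operads) are strictly identified. The compatibility with $p_!$ is subtler: one must know that for a symmetric operad $\ope{O}$, the forgetful-to-colours functor factors the same way whether one first passes to $p_!\ope{O}$ and then reads off algebras, or reads off $\ope{O}$-algebras directly — this is exactly the statement ``the category of $\ope{O}$-algebras is equivalent to the category of $\oat{\ope{O}}$-algebras'' recorded at the end of Section~\ref{sec:opToAt}, and the fact that the forgetful functors to $\ssets^C$ commute with these equivalences is what forces the transferred model structures to agree. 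I would spell this out carefully but without recomputing the transfer: once the underlying-object functors match, uniqueness of transferred model structures does the rest.

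Finally I would remark that, combined with Theorem~\ref{thm:ber_bad}-style reasoning already used for algebraic theories, this yields Theorem~\ref{thm:moritasOp} from the introduction as the equivalence \mbox{(i)$\Leftrightarrow$(iii)} of Theorem~\ref{theo:main morita oper}, and that the $\Sigma$-cofibrancy hypothesis cannot be dropped because it is essential to Proposition~\ref{propo:hom fully faith} (one needs the relevant $\Sym_C(\str{c},\str{c})$-modules to be cofibrant so that passing to $\Sigma_\alpha$-orbits is homotopically well behaved).
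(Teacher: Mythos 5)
Your proof takes the same route as the paper: (i)$\Leftrightarrow$(ii) from Proposition~\ref{propo:hom fully faith} and Proposition~\ref{prop:hom ess retract}, and (ii)$\Leftrightarrow$(iii) from Corollary~\ref{cor:char_Morita_at} together with the equivalence of $\ope{O}$- and $\oat{\ope{O}}$-algebras. You spell out the identification of transferred model structures (and the compatibility of $p_!$ and $\cAtf$ with the algebra adjunctions) more explicitly than the paper, which simply cites the equivalence of algebra categories, but the argument is essentially the same; your remark that $\Sigma$-cofibrancy is needed only for (i)$\Leftrightarrow$(ii) also matches the paper's subsequent remark.
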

\begin{proof}
    The equivalence between (i) and (ii) follows directly from Proposition~\ref{propo:hom fully faith} and Proposition~\ref{prop:hom ess retract}. The equivalence between (ii) and (iii) follows from Corollary~\ref{cor:char_Morita_at} and the fact that for every operad $\ope{O}$, the categories of $\ope{O}$-algebras and $\oat{\ope{O}}$-algebras are equivalent.
\end{proof}

\begin{rem}
The fact that (i) implies (iii) was proved in \cite[Theorem 4.1]{BM07} for operads with a fixed set of colours $C$ in a monoidal model category. Our result, which is an if and only if, works for the category of all coloured operads in simplicial sets and hence implies the one by Berger and Moerdijk in the simplicial case. However, our proof is different from the one in~\cite{BM07}, since we use simplicial algebraic theories, the theory of weak models developed by Badzioch and Bergner, and a functor from simplicial operads to simplicial algebraic theories.
\end{rem}
\begin{rem}
Note that the equivalence of implications (ii) and (iii) can be proved without the $\Sigma$-cofibrancy conditions. At first, the fact that among the class of maps that we have defined as Morita weak equivalences, only those between $\Sigma$-cofibrant operads induce homotopy equivalences on the homotopy categories of algebras might seems disappointing. However, one has to keep in mind that this was already the case for the class of weak equivalences in the Cisinski--Moerdijk model structure \cite{CM13} on $\sOper$, which models the homotopy theory of $\infty$-operads. We also recall that every cofibrant operad in the Cisinski--Moerdijk model structure is $\Sigma$\nobreakdash-cofibrant and that the class of Morita weak equivalence includes the Cisinski-Moerdijk weak equivalences. In \cite[Theorem 5.7]{CG19} the authors have proved that there exists a localization of the Cisinski--Moerdijk model structure, called the Morita model structure, that has the Morita weak equivalences as class of weak equivalences.
    
The above theorem shows that Morita weak equivalences are the only morphisms of $\infty$-operads inducing an equivalence between the homotopy invariant algebraic structures determined by source and target. The Morita model structure can therefore be regarded as the homotopy theory of homotopy invariant algebraic structures.
\end{rem}


\begin{thebibliography}{BdBM17}
    
    \bibitem[Bad02]{BA02}
    B.~Badzioch.
    \newblock Algebraic theories in homotopy theory.
    \newblock {\em Ann. of Math. (2)}, 155(3):895--913, 2002.

    \bibitem[Ber06]{Ber06}
    J.~Bergner.
    \newblock Rigidification of algebras over multi-sorted theories.
    \newblock {\em Algebr. Geom. Topol.}, 6:1925--1955, 2006.

    \bibitem[BM06]{BM06}
    C.~Berger and I.~Moerdijk.
    \newblock The {B}oardman--{V}ogt resolution of operads in monoidal model
    categories.
    \newblock {\em Topology}, 45(5):807--849, 2006.
    
    \bibitem[BM07]{BM07}
    C.~Berger and I.~Moerdijk.
    \newblock Resolution of coloured operads and rectification of homotopy
    algebras.
    \newblock In {\em Categories in algebra, geometry and mathematical physics},
    volume 431 of {\em Contemp. Math.}, pages 31--58. Amer. Math. Soc.,
    Providence, RI, 2007.

\bibitem[BD86]{BD86}  
F.~Borceux and D.~Dejean.
\newblock Cauchy completion in category theory.
\newblock {\em Cahiers Topologie G\'eom. Diff\'erentielle Cat\'eg.},
  27(2):133--146, 1986.

    \bibitem[CG19]{CG19}
    G.~Caviglia and J.~J. Guti\'errez.
    \newblock Morita homotopy theory for $(\infty,1)$-categories and $\infty$-operads.
    \newblock {\em Forum Math.} 31(3): 661--684, 2019.

    \bibitem[CM13]{CM13}
    D.-C. Cisinski and I.~Moerdijk.
    \newblock Dendroidal sets and simplicial operads.
    \newblock {\em J. Topol.}, 6(3):705--756, 2013.

    \bibitem[DK87]{DK87}
    W.~G. Dwyer and D.~M. Kan.
    \newblock Equivalences between homotopy theories of diagrams.
    \newblock In {\em Algebraic topology and algebraic {$K$}-theory ({P}rinceton,
        {N}.{J}., 1983)}, volume 113 of {\em Ann. of Math. Stud.}, pages 180--205.
    Princeton Univ. Press, Princeton, NJ, 1987.
   
    \bibitem[EZ76]{EZ76}
    B.~Elkins and J.~A. Zilber.
    \newblock Categories of actions and {M}orita equivalence.
    \newblock {\em Rocky Mountain J. Math.}, 6(2):199--225, 1976.
    
    \bibitem[GJ99]{GJ99}
    P.~G. Goerss and J.~F. Jardine.
    \newblock {\em Simplicial homotopy theory}, volume 174 of {\em Progress in
        Mathematics}.
    \newblock Birkh\"auser Verlag, Basel, 1999.

    \bibitem[Kel05]{Ke05}
    G~M. Kelly.
    \newblock On the operads of {J}.{P}. {M}ay.
    \newblock {\em Repr. Theory Appl. Categ}, 13:1--13, 2005.

    \bibitem[May67]{May67}
    J.~P. May.
    \newblock {\em Simplicial objects in algebraic topology}.
    \newblock Van Nostrand Mathematical Studies, No. 11. D. Van Nostrand Co., Inc.,
    Princeton, N.J.-Toronto, Ont.-London, 1967.
   
    \bibitem[Rez02]{Re02}
    C.~Rezk.
    \newblock Every homotopy theory of simplicial algebras admits a proper model.
    \newblock {\em Topology Appl.}, 119(1):65--94, 2002. 
\end{thebibliography}
\end{document}